\newcommand{\intr}{\operatorname{intr}}
\newcommand{\conv}{\operatorname{conv}}
\newcommand{\GP}{\operatorname{GP}}
\newcommand{\grad}{\nabla f}
\newcommand{\R}{\mathbb{R}}
\newcommand{\Z}{\mathbb{Z}}
\newcommand{\cM}{\mathcal{M}}
\newcommand{\uSet}{\mathcal{U}}
\newcommand{\lef}{\le_f}
\newcommand{\lf}{<_f}
\newcommand{\ef}{=_f}
\newcommand{\ler}{\le_r}
\newcommand{\lr}{<_r}
\newcommand{\er}{=_r}
\newcommand{\flip}{\textsc{Flip}}
\spnewtheorem*{ex}{Example}{\bfseries}{\itshape}
\spnewtheorem{obs}{Observation}{\bfseries}{\itshape}
\title{Constructing lattice-free gradient polyhedra\\in dimension two\footnote{This paper extends work from an IPCO paper of the same title.
This version includes complete proofs as well as new results on convergence for functions that are Lipschitz continuous and strongly convex.}}
\author{Joseph Paat~$^\dagger$, Miriam Schl\"oter~$^\dagger$,\\ Emily Speakman~$^*$}
\date{}
\institute{
	$\dagger$ Department of Mathematics, ETH Z\"urich, Switzerland
	\\\email{\{joseph.paat, miriam.schloeter\}@ifor.math.ethz.ch}\\
	$*$ Department of Mathematical and Statistical Sciences, University of Colorado Denver, USA\\\email{emily.speakman@ucdenver.edu}
}
\begin{document}

\maketitle

\begin{abstract}
Lattice-free gradient polyhedra can be used to certify optimality for mixed integer convex minimization models.
We consider how to construct these polyhedra for unconstrained models with two integer variables under the assumption that all level sets are bounded.
A classic result of Bell, Doignon, and Scarf states that a lattice-free gradient polyhedron with at most four facets exists in this setting.
We present an algorithm for creating a sequence of gradient polyhedra, each of which has at most four facets, that finitely converges to a lattice-free gradient polyhedron.
Each update requires constantly many gradient evaluations.
Our updates imitate the gradient descent algorithm, and consequently, it yields a gradient descent type of algorithm for problems with two integer variables.
\end{abstract}
%
\section{Introduction.}
 \pagenumbering{arabic}

Let $f: \R^d \times \R^n \to \R$ be convex and differentiable with gradient $\grad : \R^d \times \R^n \to \R^d \times \R^n$.
We assume oracle access to $\grad$.
The unconstrained mixed integer convex minimization problem is
\begin{equation}\label{eqMainProb}\tag{CM}
\min\{f(x, z) : ~ (x, z) \in \R^d \times \Z^n\}.
\end{equation}
Applications of~\eqref{eqMainProb} include statistical regression and the closest vector problem.
In this paper we consider how to solve~\eqref{eqMainProb} by constructing an optimality certificate in the form of a lattice-free gradient polyhedron.
The \emph{gradient polyhedron corresponding to a non-empty finite set $\uSet \subseteq  \R^d \times \Z^n$} is
\[
\GP(\uSet) := \{ (x, z) \in \R^d \times \R^n :
\grad(\overline{x}, \overline{z})^\intercal ( x -\overline{x}, z - \overline{z}) \le 0 ~\forall~ (\overline{x}, \overline{z}) \in \uSet\}.
\]

We say that $\GP(\uSet)$ is lattice-free if $\intr(\GP(\uSet)) \cap \R^d \times \Z^n = \emptyset$, where
\[
\intr(\GP(\uSet)) :=  \{ (x, z) \in \R^d \times \R^n :
\grad(\overline{x}, \overline{z})^\intercal ( x -\overline{x}, z - \overline{z}) < 0 ~\forall~ (\overline{x}, \overline{z}) \in \uSet\}.
\]
This definition of interior implies that $\intr(\GP(\uSet)) = \emptyset$ if there exists $(\overline{x}, \overline{z}) \in \uSet$ such that $\grad(\overline{x}, \overline{z}) = \mathbf{0}$.
The significance of lattice-free gradient polyhedra comes from the definition of convexity:
\[
f(x,z) \ge f(\overline{x},\overline{z}) + \grad(\overline{x},\overline{z})^\intercal(x-\overline{x},z-\overline{z}) \quad \forall ~ (x,z), (\overline{x},\overline{z}) \in \R^d\times\R^n.
\]
If $\GP(\uSet)$ is lattice-free, then $\uSet$ contains a minimizer of~\eqref{eqMainProb}.

Constructing lattice-free gradient polyhedra is well studied when $n =0 $.
In fact, the classic gradient descent algorithm is a search algorithm for a lattice-free set in this setting.
See~\cite[Chapter 9]{BV2004} for a standard discussion on gradient descent.
Gradient descent updates a vector $x^i \in \R^d$ to $x^{i+1} = x^i - \alpha^i \grad(x^i)$, where $\alpha^i > 0$.
Under mild assumptions, the sequence $(x^i)_{i=0}^\infty$ converges to a vector $x^*$ with $\grad(x^*) = \mathbf{0}$.
One can verify that
\[
\grad(x^*) = \mathbf{0}  ~\text{if and only if}~\intr(\GP(\{x^*\})) = \emptyset,
\]
so $(x^i)_{i=0}^\infty$ corresponds to a sequence of gradient polyhedra $(\GP(\{x^i\}))_{i=0}^\infty$ that converges to a lattice-free gradient polyhedron $\GP(\{x^*\})$.
Notably, each gradient polyhedron that gradient descent generates has at most $2^n = 2^0 = 1$ many facets and the initial $x^0$ can be chosen arbitrarily.
Also, the algorithm only requires gradient evaluations (not even function evaluations are required).

Lattice-free gradient polyhedra can also be constructed iteratively when $n = 1$ and $d = 0$.
This certificate can be found by starting with an arbitrary $\uSet^0 = \{z^0, z^0+1\} \subseteq \Z$ and updating it as follows:
\[
\uSet^{i+1} :=
\begin{cases}
\{z^i-1, z^i\} & \text{if } 0 < \grad(z^i) \\
\{z^i+1, z^i+2\} & \text{if } \grad(z^i+1) < 0\\
\uSet^i & \text{if } \grad(z^i) \le 0 \le \grad(z^i+1).
\end{cases}
\]
The gradient comparisons ensure that $\uSet^i$ is updated by `flipping' closer to the minimizer $x^*$ of the continuous relaxation of~\eqref{eqMainProb}.
If $\uSet^{i+1} = \uSet^i$, then $\uSet^i = \{\lfloor x^*\rfloor,  \lceil x^* \rceil\}$ and $\GP(\uSet^i)$ is lattice-free.
This update procedure is not the most efficient way of obtaining $\{\lfloor x^*\rfloor,  \lceil x^* \rceil\}$ as one can simply round $x^*$.
However, the procedure does have the same properties as gradient descent: every gradient polyhedron has at most $2^n = 2^1 = 2$ many facets,  the initial set $\uSet^0$ is arbitrary, and each update only requires a constant number of gradient evaluations.
Both the update and the rounding approach generalize to when $d \ge 1$, but they do not extend naturally if $n \ge 2$.

The known results on lattice-free gradient polyhedra when $n \ge 2$ are existential rather than algorithmic.
It follows from the work of Bell, Doignon, and Scarf~\cite{B1977,D1973,S1977} that there exists $\uSet \subseteq \Z^n$ such that $|\uSet| \le 2^n$ and $\GP(\uSet)$ is lattice-free.
Baes et al.~\cite{BOW2016} extend this to lattice-free gradient polyhedra for problems with additional convex constraints.
Basu et al.~\cite{BCCWW2017} generalize gradient polyhedra further to so-called $S$-free sets.

The goal of this paper is to iteratively construct a lattice-free gradient polyhedron when $n \ge 2$.
We aim to design an algorithm that serves as a mixed integer counterpart to the gradient descent in the following sense: it creates a sequence of gradient polyhedra that each have at most $2^n$ many facets and that converges to a lattice-free polyhedron, it only requires gradient evaluations, and the initial $\uSet$ can be arbitrarily chosen.
We say such an algorithm is of \emph{gradient descent type}.

We make the following assumption:
\begin{equation}\label{eqLevelSets}
\text{$\forall ~\alpha \in \R$, the level set $\{(x,z) \in \R^d \times \R^n : f(x,z) \le \alpha\}$ is bounded.}
\end{equation}
Assumption~\eqref{eqLevelSets} guarantees that a lattice-free optimality certificate exists and that~\eqref{eqMainProb} has an optimal solution.
This assumption captures families such as strictly convex and strongly convex functions.
The techniques in this paper do not immediately extend to detect if this assumption is violated.

\subsection{Related work}

Lattice-free convex sets (not necessarily gradient polyhedra) have been heavily studied in the context of valid inequality generation for integer programs, see, e.g., \cite{ALWW2007,B1971,BCD2015,BCK2012,DW2012} and the references therein.
We also refer to~\cite{AKW2017,AWW2011,BCCZ2010,DW2008,L1989} for the structure of inclusion-wise maximal lattice-free polyhedra.
Algorithms for~\eqref{eqMainProb} use techniques such as branch and bound (see, e.g.,~\cite{GR1985,S2001}), outer approximations (see, e.g.,~\cite{BBCCGLLLMSW2008,DG1986}), convex separation~\cite[Theorem 6.7.10]{GLS1988}, or improvement oracles~\cite{BOWW2013}.
Each of these algorithms use non-gradient information or create polyhedral relaxations with potentially more than $2^n$ facets.
Baes et al.~\cite{BOWW2013} give a geometric algorithm for~\eqref{eqMainProb} when $d = 0$ and $n = 2$ but explicitly use knowledge of a bounded set containing the minimum.
In particular, they solve~\eqref{eqMainProb} by subdividing a box $B$ containing a minimizer of~\eqref{eqMainProb} and solving $O(\ln(B))$ many continuous problem.

\subsection{Statement of results}
Our main contribution is a gradient descent type algorithm when $d = 0$ and $n = 2$.
Figure~\ref{figExample} provides an example.
The algorithm creates a sequence of gradient polyhedra $(\GP(\uSet^i))_{i=0}^{\infty} $ defined by unimodular sets.
We say $\uSet \subseteq \Z^2$ is \emph{unimodular} if
\begin{equation}\label{eqUniSet}
\uSet := \uSet(z,U) := \{z + U e : e \in \{0,1\}^2\},
\end{equation}
for $z \in \Z^2$ and a matrix $U \in \Z^{2\times2}$ with $|\det(U)| = 1$.
Table~\ref{tableFlips} outlines how we update $\uSet^i$ to $\uSet^{i+1}$.
Every gradient polyhedron we generate has at most $2^n = 2^2 = 4$ many facets because it is defined by a unimodular set.
Also, the initial set $\uSet^0$ can be any arbitrary unimodular set, and each update requires a constant number of gradient evaluations.\footnote{We use `gradient evaluation' to refer a single inner product evaluation using gradients, and `constantly many' can be chosen to be 20 as counted in Table~\ref{tableFlips}.}
%
%

%
\begin{figure}
\begin{tabular}{c@{\hskip .325 in}c@{\hskip .325 in}c@{\hskip .325 in}c}
\begin{tikzpicture}[scale = .5]
\clip (-2.25,-2.25) rectangle (2.25,2.25);


\def\xstar{2};\def\ystar{2}
\def\Inter{((6*\xstar+1)*\xstar + (2*\ystar+1)*\ystar)  / (2*\ystar+1)}
\def\Slope{-(6*\xstar+1)/(2*\ystar+1)}
\draw[domain=-3:3, smooth, variable=\x, red] plot ({\x},{\x*\Slope + \Inter});

\def\xstar{1};\def\ystar{2}
\def\Inter{((6*\xstar+1)*\xstar + (2*\ystar+1)*\ystar)  / (2*\ystar+1)}
\def\Slope{-(6*\xstar+1)/(2*\ystar+1)}
\draw[domain=-3:3, smooth, variable=\x, red] plot ({\x},{\x*\Slope + \Inter});

\def\xstar{0};\def\ystar{1}
\def\Inter{((6*\xstar+1)*\xstar + (2*\ystar+1)*\ystar)  / (2*\ystar+1)}
\def\Slope{-(6*\xstar+1)/(2*\ystar+1)}
\draw[domain=-3:3, smooth, variable=\x, red] plot ({\x},{\x*\Slope + \Inter});

\def\xstar{1};\def\ystar{1}
\def\Inter{((6*\xstar+1)*\xstar + (2*\ystar+1)*\ystar)  / (2*\ystar+1)}
\def\Slope{-(6*\xstar+1)/(2*\ystar+1)}
\draw[domain=-3:3, smooth, variable=\x, red] plot ({\x},{\x*\Slope + \Inter});

\def\xstar{0};\def\ystar{1}
\def\InterStar{((6*\xstar+1)*\xstar + (2*\ystar+1)*\ystar)  / (2*\ystar+1)}
\def\SlopeStar{-(6*\xstar+1)/(2*\ystar+1)}

\def\xhat{1};\def\yhat{1}
\def\InterHat{((6*\xhat+1)*\xhat + (2*\yhat+1)*\yhat)  / (2*\yhat+1)}
\def\SlopeHat{-(6*\xhat+1)/(2*\yhat+1)}

\def\xIntersect{(\InterHat - \InterStar) / (\SlopeStar - \SlopeHat)}
\def\yIntersect{\SlopeStar*\xIntersect + \InterStar}

\draw[draw = none, fill = red!50, opacity = .3] (-3,-3) -- (-3, {-3*\SlopeStar+\InterStar}) -- ({\xIntersect}, {\yIntersect}) -- (3, {3*\SlopeHat+\InterHat}) -- cycle;

\foreach \i in {-2,...,2}{
\foreach \j in {-2,...,2}
\draw[fill = black!50, draw = black!50](\i,\j) circle (.5 ex);
}

\foreach \i in {0, 2, 4, 6,10, 16}{
\def\radx{sqrt((3*\i+1)/9)}
\def\rady{sqrt((3*\i+1)/3)}
\draw[black!50] (-1/6,-1/2) ellipse ({\radx} and {\rady} );
}

\draw[fill = black, draw = black](0,1) circle (.75 ex);
\draw[fill = black, draw = black](1,1) circle (.75 ex);
\draw[fill = black, draw = black](2,2) circle (.75 ex);
\draw[fill = black, draw = black](1,2) circle (.75 ex);
\draw[fill = red, draw = red](-1/6,-1/2) circle (.75 ex);


\draw[dashed, thick] (1,2)--(0,1)--(1,1)--(2,2)--cycle;

\end{tikzpicture}


&

\begin{tikzpicture}[scale = .5]
\clip (-2.25,-2.25) rectangle (2.25,2.25);


\def\xstar{-1};\def\ystar{0}
\def\Inter{((6*\xstar+1)*\xstar + (2*\ystar+1)*\ystar)  / (2*\ystar+1)}
\def\Slope{-(6*\xstar+1)/(2*\ystar+1)}
\draw[domain=-3:3, smooth, variable=\x, red] plot ({\x},{\x*\Slope + \Inter});

\def\xstar{0};\def\ystar{1}
\def\Inter{((6*\xstar+1)*\xstar + (2*\ystar+1)*\ystar)  / (2*\ystar+1)}
\def\Slope{-(6*\xstar+1)/(2*\ystar+1)}
\draw[domain=-3:3, smooth, variable=\x, red] plot ({\x},{\x*\Slope + \Inter});

\def\xstar{-1};\def\ystar{1}
\def\Inter{((6*\xstar+1)*\xstar + (2*\ystar+1)*\ystar)  / (2*\ystar+1)}
\def\Slope{-(6*\xstar+1)/(2*\ystar+1)}
\draw[domain=-3:3, smooth, variable=\x, red] plot ({\x},{\x*\Slope + \Inter});

\def\xstar{-2};\def\ystar{0}
\def\Inter{((6*\xstar+1)*\xstar + (2*\ystar+1)*\ystar)  / (2*\ystar+1)}
\def\Slope{-(6*\xstar+1)/(2*\ystar+1)}
\draw[domain=-3:3, smooth, variable=\x, red] plot ({\x},{\x*\Slope + \Inter});

\def\xstar{-1};\def\ystar{0}
\def\InterStar{((6*\xstar+1)*\xstar + (2*\ystar+1)*\ystar)  / (2*\ystar+1)}
\def\SlopeStar{-(6*\xstar+1)/(2*\ystar+1)}

\def\xhat{0};\def\yhat{1}
\def\InterHat{((6*\xhat+1)*\xhat + (2*\yhat+1)*\yhat)  / (2*\yhat+1)}
\def\SlopeHat{-(6*\xhat+1)/(2*\yhat+1)}

\def\xIntersect{(\InterHat - \InterStar) / (\SlopeStar - \SlopeHat)}
\def\yIntersect{\SlopeStar*\xIntersect + \InterStar}

\draw[draw = none, fill = red!50, opacity = .3] (-3, {-3*\SlopeStar+\InterStar}) -- ({\xIntersect}, {\yIntersect}) -- (3, {3*\SlopeHat+\InterHat}) --(3,-3)-- cycle;

\foreach \i in {-2,...,2}{
\foreach \j in {-2,...,2}
\draw[fill = black!50, draw = black!50](\i,\j) circle (.5 ex);
}

\foreach \i in {0, 2, 4, 6,10, 16}{
\def\radx{sqrt((3*\i+1)/9)}
\def\rady{sqrt((3*\i+1)/3)}
\draw[black!50] (-1/6,-1/2) ellipse ({\radx} and {\rady} );
}

\draw[fill = black, draw = black](0,1) circle (.75 ex);
\draw[fill = black, draw = black](-1,1) circle (.75 ex);
\draw[fill = black, draw = black](-2,0) circle (.75 ex);
\draw[fill = black, draw = black](-1,0) circle (.75 ex);

\draw[fill = red, draw = red](-1/6,-1/2) circle (.75 ex);


\draw[dashed, thick] (-1,0)--(0,1)--(-1,1)--(-2,0)--cycle;

\end{tikzpicture}


&

\begin{tikzpicture}[scale = .5]
\clip (-2.25,-2.25) rectangle (2.25,2.25);


\def\xOne{-1};\def\yOne{-1}
\def\InterOne{((6*\xOne+1)*\xOne + (2*\yOne+1)*\yOne)  / (2*\yOne+1)}
\def\SlopeOne{-(6*\xOne+1)/(2*\yOne+1)}
\draw[domain=-3:3, smooth, variable=\x, red] plot ({\x},{\x*\SlopeOne + \InterOne});

\def\xTwo{-1};\def\yTwo{0}
\def\InterTwo{((6*\xTwo+1)*\xTwo + (2*\yTwo+1)*\yTwo)  / (2*\yTwo+1)}
\def\SlopeTwo{-(6*\xTwo+1)/(2*\yTwo+1)}
\draw[domain=-3:3, smooth, variable=\x, red] plot ({\x},{\x*\SlopeTwo + \InterTwo});

\def\xThree{0};\def\yThree{1}
\def\InterThree{((6*\xThree+1)*\xThree + (2*\yThree+1)*\yThree)  / (2*\yThree+1)}
\def\SlopeThree{-(6*\xThree+1)/(2*\yThree+1)}
\draw[domain=-3:3, smooth, variable=\x, red] plot ({\x},{\x*\SlopeThree + \InterThree});

\def\xFour{0};\def\yFour{0}
\def\InterFour{((6*\xFour+1)*\xFour + (2*\yFour+1)*\yFour)  / (2*\yFour+1)}
\def\SlopeFour{-(6*\xFour+1)/(2*\yFour+1)}
\draw[domain=-3:3, smooth, variable=\x, red] plot ({\x},{\x*\SlopeFour + \InterFour});

\def\xOneTwo{(\InterOne - \InterTwo) / (\SlopeTwo - \SlopeOne)}
\def\yOneTwo{\SlopeOne*\xOneTwo + \InterOne}

\def\xTwoFour{(\InterFour - \InterTwo) / (\SlopeTwo - \SlopeFour)}
\def\yTwoFour{\SlopeFour *\xTwoFour  + \InterFour }

\draw[draw = none, fill = red!50, opacity = .3] ({0}, {0*\SlopeOne+\InterOne}) -- ({\xOneTwo}, {\yOneTwo}) --
																	({\xTwoFour}, {\yTwoFour}) -- ({3},{3*\SlopeFour+\InterFour}) -- cycle;

\foreach \i in {-2,...,2}{
\foreach \j in {-2,...,2}
\draw[fill = black!50, draw = black!50](\i,\j) circle (.5 ex);
}

\foreach \i in {0, 2, 4, 6,10, 16}{
\def\radx{sqrt((3*\i+1)/9)}
\def\rady{sqrt((3*\i+1)/3)}
\draw[black!50] (-1/6,-1/2) ellipse ({\radx} and {\rady} );
}

\draw[fill = black, draw = black](0,0) circle (.75 ex);
\draw[fill = black, draw = black](0,1) circle (.75 ex);
\draw[fill = black, draw = black](-1,0) circle (.75 ex);
\draw[fill = black, draw = black](-1,-1) circle (.75 ex);

\draw[fill = red, draw = red](-1/6,-1/2) circle (.75 ex);


\draw[dashed, thick] (-1,0)--(0,1)--(0,0)--(-1,-1)--cycle;

\end{tikzpicture}


&

\begin{tikzpicture}[scale = .5]
\clip (-2.25,-2.25) rectangle (2.25,2.25);


\def\xOne{-1};\def\yOne{0}
\def\InterOne{((6*\xOne+1)*\xOne + (2*\yOne+1)*\yOne)  / (2*\yOne+1)}
\def\SlopeOne{-(6*\xOne+1)/(2*\yOne+1)}
\draw[domain=-3:3, smooth, variable=\x, red] plot ({\x},{\x*\SlopeOne + \InterOne});

\def\xTwo{0};\def\yTwo{0}
\def\InterTwo{((6*\xTwo+1)*\xTwo + (2*\yTwo+1)*\yTwo)  / (2*\yTwo+1)}
\def\SlopeTwo{-(6*\xTwo+1)/(2*\yTwo+1)}
\draw[domain=-3:3, smooth, variable=\x, red] plot ({\x},{\x*\SlopeTwo + \InterTwo});

\def\xThree{-1};\def\yThree{-1}
\def\InterThree{((6*\xThree+1)*\xThree + (2*\yThree+1)*\yThree)  / (2*\yThree+1)}
\def\SlopeThree{-(6*\xThree+1)/(2*\yThree+1)}
\draw[domain=-3:3, smooth, variable=\x, red] plot ({\x},{\x*\SlopeThree + \InterThree});

\def\xFour{0};\def\yFour{-1}
\def\InterFour{((6*\xFour+1)*\xFour + (2*\yFour+1)*\yFour)  / (2*\yFour+1)}
\def\SlopeFour{-(6*\xFour+1)/(2*\yFour+1)}
\draw[domain=-3:3, smooth, variable=\x, red] plot ({\x},{\x*\SlopeFour + \InterFour});

\def\xOneThree{(\InterOne - \InterThree) / (\SlopeThree - \SlopeOne)}
\def\yOneThree{\SlopeOne*\xOneThree + \InterOne}

\def\xOneTwo{(\InterOne - \InterTwo) / (\SlopeTwo - \SlopeOne)}
\def\yOneTwo{\SlopeOne*\xOneTwo + \InterOne}

\def\xTwoFour{(\InterFour - \InterTwo) / (\SlopeTwo - \SlopeFour)}
\def\yTwoFour{\SlopeFour *\xTwoFour  + \InterFour }

\def\xThreeFour{(\InterFour - \InterThree) / (\SlopeThree - \SlopeFour)}
\def\yThreeFour{\SlopeFour *\xThreeFour  + \InterFour }

\draw[draw = none, fill = red!50, opacity = .3] ({\xOneThree}, {\yOneThree}) -- ({\xThreeFour}, {\yThreeFour}) --
																	({\xTwoFour}, {\yTwoFour}) -- ({\xOneTwo},{\yOneTwo}) -- cycle;

\foreach \i in {-2,...,2}{
\foreach \j in {-2,...,2}
\draw[fill = black!50, draw = black!50](\i,\j) circle (.5 ex);
}

\foreach \i in {0, 2, 4, 6,10, 16}{
\def\radx{sqrt((3*\i+1)/9)}
\def\rady{sqrt((3*\i+1)/3)}
\draw[black!50] (-1/6,-1/2) ellipse ({\radx} and {\rady} );
}

\draw[fill = black, draw = black](0,0) circle (.75 ex);
\draw[fill = black, draw = black](0,-1) circle (.75 ex);
\draw[fill = black, draw = black](-1,-1) circle (.75 ex);
\draw[fill = black, draw = black](-1,0) circle (.75 ex);

\draw[fill = red, draw = red](-1/6,-1/2) circle (.75 ex);


\draw[dashed, thick] (0,0)--(0,-1)--(-1,-1)--(-1,0)--cycle;

\end{tikzpicture}

\\
$\uSet^0$ & $\uSet^{1}$ & $ \uSet^{2}$ & $\uSet^3$
\end{tabular}
\caption{A sequence $(\uSet^i)_{i=0}^3$ of unimodular sets generated by our updates for $f(x_1, x_2) := 3x_1^2 + x_2^2 + x_1+x_2$.
The convex hull of each $\uSet^i$ is outlined in black, and level curves of $f$ are in gray.
The hyperplanes defining $\GP(\uSet^i)$ are in red, the minimizer of the continuous relaxation of~\eqref{eqMainProb} is drawn as a red point, and $\GP(\uSet^i)$ is shaded in red.}\label{figExample}
\end{figure}

We use two measures of progress to ensure that $(\uSet^i)_{i=0}^{\infty} $ converges to a lattice-free gradient polyhedron $\GP(\uSet^T)$.
The first measure is the minimum function value in $\uSet^i$:
\begin{equation}\label{eqFunctionVal}
\min\{f(z) : z \in \uSet^i\}.
\end{equation}
The second measure is the distance from the set of optimal solution of~\eqref{eqMainProb} to $\uSet^i$ with respect to $U^i$:
\begin{equation}\label{eqDistVal}
\min\{\|(U^i)^{-1}(z^* - z)\|_1 : z \in \uSet^i \text{ and } z^* \text{ is optimal for}~\eqref{eqMainProb}\}.
\end{equation}
We discuss~\eqref{eqDistVal} more in Section~\ref{secFlippingAlg}.
Our updates are such that~\eqref{eqFunctionVal} and~\eqref{eqDistVal} are both \emph{non-increasing} in $i$.
Furthermore, if $\uSet^{i}$ does not contain a minimizer of~\eqref{eqMainProb}, then at least one measure strictly decreases after at most two updates.
The unimodular set requires preprocessing, which is described in Lemma~\ref{eqPreprocess}.

\begin{theorem}\label{thmMain1}
Let $\uSet^i $ be a preprocessed unimodular set.
If $\uSet^i $ does not fit into Table~\ref{tableFlips}, then it is lattice-free.
Otherwise, it can be updated to a unimodular set $\uSet^{i+1}$ such that
\begin{enumerate}[leftmargin = .75 cm, label = \textit{(\roman*)}]
\item neither \eqref{eqFunctionVal} nor~\eqref{eqDistVal} increases from $\uSet^i$ to $\uSet^{i+1}$,
\smallskip
\item if \eqref{eqFunctionVal} and~\eqref{eqDistVal} remain constant and $\uSet^{i+1}$ is updated to $\uSet^{i+2}$, then
\smallskip
\begin{enumerate}[leftmargin = .75 cm, label = (ii-\alph*)]
\item \eqref{eqFunctionVal} or~\eqref{eqDistVal} strictly decreases from $\uSet^{i+1}$ to $\uSet^{i+2}$ or
\item $\uSet^{i}$ contains a minimizer of~\eqref{eqMainProb}.
\end{enumerate}
\end{enumerate}
Moreover, this can all be checked using constantly many gradient evaluations.
\end{theorem}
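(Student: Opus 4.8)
The plan is to normalize using the unimodular structure of $\uSet^i$ and then run a finite case analysis indexed by the rows of Table~\ref{tableFlips}. First I would reduce to the standard unit square: writing $\uSet^i = \uSet(z^i, U^i)$ with $|\det U^i| = 1$, apply the change of coordinates $y \mapsto (U^i)^{-1}(y - z^i)$. This pulls $f$ back to the convex function $g(y) := f(z^i + U^i y)$, whose gradient is $\nabla g(y) = (U^i)^\intercal \grad(z^i + U^i y)$; it maps $\uSet^i$ to $\{0,1\}^2$, preserves $\Z^2$, leaves the measure~\eqref{eqFunctionVal} unchanged, and turns~\eqref{eqDistVal} into the ordinary $\ell_1$-distance from the transformed minimizer of~\eqref{eqMainProb} to $\{0,1\}^2$. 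Since every inner product $\grad(\overline{z})^\intercal(y - \overline{z})$ used to define $\GP(\uSet^i)$ and to build Table~\ref{tableFlips} is invariant under this map, it suffices to prove the statement for $\uSet^i = \{0,1\}^2$, where each row of Table~\ref{tableFlips} becomes an explicit unimodular relabelling of a unit square adjacent to $\{0,1\}^2$.

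Next I would settle the dichotomy ``fits into the table or is lattice-free''. Table~\ref{tableFlips} is organized by the sign pattern of the finitely many inner products $\nabla g(u)^\intercal(v - u)$ over pairs $u, v$ of vertices of $\{0,1\}^2$ (the four edges and the two diagonals), a finite list detectable with the constantly many gradient evaluations tallied in the table. If $\uSet^i$ matches no row, then the complementary sign conditions certify that none of the lattice points neighbouring $\{0,1\}^2$ lies in $\intr(\GP(\uSet^i))$; combined with the preprocessing of Lemma~\ref{eqPreprocess} and the level-set hypothesis~\eqref{eqLevelSets}, which together confine $\GP(\uSet^i)$ so that no farther lattice point can be interior either, this shows $\GP(\uSet^i)$ is lattice-free.

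For property (i) I would fix a row and let $\uSet^{i+1}$ be the flip it prescribes. For~\eqref{eqFunctionVal}: the flip retains a vertex $u^*$ attaining $\min\{f(z) : z \in \uSet^i\}$, and for each new vertex $z' \in \uSet^{i+1} \setminus \uSet^i$ convexity gives $f(z') \le f(u^*) + \grad(u^*)^\intercal(z' - u^*)$, where the inequality defining the row forces $\grad(u^*)^\intercal(z' - u^*) \le 0$; hence $\min\{f(z) : z \in \uSet^{i+1}\} \le \min\{f(z) : z \in \uSet^i\}$. For~\eqref{eqDistVal}: in the normalized coordinates, passing from $\uSet(z^i, U^i)$ to $\uSet(z^{i+1}, U^{i+1})$ is a fixed unimodular shear composed with a unit translation toward the half-plane in which the gradient signs of the row place the minimizer, and a direct computation shows this does not increase the minimum $\ell_1$-distance to the vertices of a unit square. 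This is the verification previewed in Section~\ref{secFlippingAlg}, and it amounts to checking one unimodular matrix per row.

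The hard part is property (ii). Assuming both measures are unchanged from $\uSet^i$ to $\uSet^{i+1}$, equality in the convexity bound above forces $u^*$ to be a common vertex of $\uSet^i$ and $\uSet^{i+1}$ with $\grad(u^*)^\intercal(z' - u^*) = 0$, and equality in the distance bound pins the minimizer $z^*$ to a specific strip relative to the flip. Substituting these equalities into the sign conditions of Table~\ref{tableFlips} leaves only finitely many rows that $\uSet^{i+1}$ can occupy, and for each I would check that the next prescribed flip to $\uSet^{i+2}$ either makes a previously non-strict gradient inequality strict, so that~\eqref{eqFunctionVal} drops, or shortens the $\ell_1$-distance, so that~\eqref{eqDistVal} drops — unless the equalities already force a minimizer of~\eqref{eqMainProb} to lie among the common vertices of $\uSet^i$, which is exactly case (ii-b). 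Only boundedly many (row, successor-row) pairs arise, so the analysis terminates, and since reading the row of $\uSet^i$, applying the flip, and reading the row of $\uSet^{i+1}$ each cost at most the constant number of inner products listed in Table~\ref{tableFlips}, the whole procedure uses constantly many gradient evaluations. I expect the main obstacle to be exactly this step: the two measures are of very different nature — one intrinsic to $f$, the other a lattice distance measured in the moving frame $U^i$ — so the ``both measures constant'' case couples them delicately, and one must verify, for every relevant pair of consecutive rows of Table~\ref{tableFlips}, that the degenerate equalities either force progress on the next flip or certify that $\uSet^i$ is already optimal.
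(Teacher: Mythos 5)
There are genuine gaps here, and one outright error. First, the convexity inequality you invoke for~\eqref{eqFunctionVal} is reversed: the gradient inequality gives $f(z') \ge f(u^*) + \grad(u^*)^\intercal(z' - u^*)$, not $\le$. Fortunately your argument does not need it — the right observation (the paper's Lemma~\ref{lemma:subset}) is simply that the minimizer $w^*$ of $f$ over $\uSet^i$ is never strictly cut, hence lies in $\uSet^i \cap \GP(\uSet^i)$, and every flip in Table~\ref{tableFlips} preserves $\uSet^i \cap \GP(\uSet^i)$; so $w^* \in \uSet^{i+1}$ and the minimum cannot increase. Second, the ``does not fit the table $\Rightarrow$ lattice-free'' direction is not a consequence of the level-set hypothesis~\eqref{eqLevelSets}; the gradient polyhedron $\GP(\uSet^i)$ may well be unbounded even under that assumption. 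The actual argument (the paper's Lemma~\ref{lemma:lattice_freeness1} together with Corollary~\ref{Cor:4_points_lattice_free}) is purely lattice-geometric: if an interior lattice point existed with $|k_2|$ minimal, then $|k_2| \neq 1$ because $H^{\pm 1}$ are empty, $|k_2|=0$ contradicts $z, z+u^1 \notin \intr$, and $|k_2| \ge 2$ makes $\det(z - x, (z+u^1) - x)$ at least $2$ so the triangle $\conv\{z, z+u^1, x\}$ contains a strictly closer lattice point — contradiction. Your appeal to the level sets ``confining $\GP$'' would fail.

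Third, and most importantly, part \textit{(ii)} is not proved but only planned, and the plan misses the structural mechanism that makes the two-step decrease work. In the paper, the key is a dichotomy between \emph{connected} and \emph{disconnected} sets: for the connected \textbf{Cases~\hyperlink{case3}{3}}--\textbf{\hyperlink{case5}{5}}, the update strictly decreases~\eqref{eqDistVal} whenever $\uSet^i$ does not already contain a minimizer (Lemmata~\ref{lemPart3ReduceROutcome}, \ref{lemPart4ReduceROutcome}, \ref{lemPart5ReduceROutcome}, parts \textit{(ii)}); for the disconnected \textbf{Cases~\hyperlink{case1}{1}} and~\textbf{\hyperlink{case2}{2}}, if both measures remain constant then $\flip(\uSet^i)$ becomes \emph{connected} (Lemmata~\ref{lemPart1ReduceROutcome}, \ref{lemPart2ReduceROutcome}, parts \textit{(ii)/(iii)}), so that the subsequent flip is of the first kind and strictly decreases~\eqref{eqDistVal}. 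Your description (``substituting equalities leaves only finitely many rows'') hands the hard work to an unspecified finite case check without identifying why that check succeeds; nothing in your sketch shows that staying constant forces connectedness, which is what makes the second flip productive. Finally, your description of the flip for~\eqref{eqDistVal} as ``a fixed unimodular shear composed with a unit translation'' is too coarse: in \textbf{Cases~\hyperlink{case3}{3}} and~\textbf{\hyperlink{case4}{4}} the shear magnitude $k$ in $\flip(U)$ is unbounded and depends on where $H^{\pm 1}$ meets $\Z^2$, and the paper controls $r_{\flip(\uSet)}(z^*) \le r_{\uSet}(z^*)$ only after first excluding, via interior-point convexity contradictions, several orthants in which $z^*$ could a priori lie. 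The normalization to $\{0,1\}^2$ and the counting of gradient evaluations are correct.
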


Assumption~\eqref{eqLevelSets} implies that~\eqref{eqFunctionVal} and~\eqref{eqDistVal} can only strictly decrease a finite number of times.
Therefore, if $(\uSet^i)_{i=0}^\infty$ is constructed using Table~\ref{tableFlips}, then $\GP(\uSet^i)$ is guaranteed to eventually contain a minimizer of~\eqref{eqMainProb}.
%
%
The updates also guarantee that $\GP(\uSet^i)$ eventually becomes lattice-free.
%

%
\begin{theorem}\label{thmMain2}
Let $\uSet^0$ be unimodular and $(\uSet^i)_{i=0}^\infty$ be created using Table~\ref{tableFlips}.
For some $T$ the gradient polyhedron $\GP(\uSet^T)$ is lattice-free, and this can be checked using constantly many gradient evaluations.
\end{theorem}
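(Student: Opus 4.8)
The plan is to read the theorem as describing the obvious algorithm: first preprocess $\uSet^0$ via Lemma~\ref{eqPreprocess} so that Theorem~\ref{thmMain1} applies, and then repeatedly test whether the current set $\uSet^i$ fits into Table~\ref{tableFlips}; if it does, perform the indicated update to obtain $\uSet^{i+1}$, and if it does not, halt and output $T := i$. Theorem~\ref{thmMain1} tells us that when we halt the set $\uSet^T$ is lattice-free, so $\GP(\uSet^T)$ is the desired certificate. Deciding which row of Table~\ref{tableFlips} applies (or that none does) is exactly the bounded gradient comparison counted in Theorem~\ref{thmMain1}, so the halting test, each update, and the final certification all cost constantly many gradient evaluations. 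Hence the whole theorem reduces to showing that this algorithm halts after finitely many updates.

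Suppose it does not, so that $(\uSet^i)_{i=0}^\infty$ is an infinite sequence of updates. By Theorem~\ref{thmMain1}(i), both \eqref{eqFunctionVal} and \eqref{eqDistVal} are non-increasing in $i$. The value of \eqref{eqDistVal} is a non-negative integer at every step, since $|\det(U^i)| = 1$ makes $(U^i)^{-1}$ an integer matrix while every $z \in \uSet^i$ and every optimal $z^*$ lie in $\Z^2$; so \eqref{eqDistVal} strictly decreases only finitely often. The value of \eqref{eqFunctionVal} is bounded below by the optimal value of \eqref{eqMainProb} and, at each step, is attained at an integer point of the level set $\{y \in \R^2 : f(y) \le \min_{z \in \uSet^0} f(z)\}$, which is bounded by \eqref{eqLevelSets} and therefore contains finitely many integer points; so \eqref{eqFunctionVal} also strictly decreases only finitely often. (This is the finiteness observation stated right after Theorem~\ref{thmMain1}.) Thus there is an index $N$ with both measures constant for all $i \ge N$. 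Applying Theorem~\ref{thmMain1}(ii) at any $i \ge N$ — the measures are constant from $\uSet^i$ to $\uSet^{i+1}$, and $\uSet^{i+1}$ is still updated because the sequence is infinite — rules out alternative (ii-a) and forces alternative (ii-b): $\uSet^i$ contains a minimizer of \eqref{eqMainProb}. In particular, for all $i \ge N$ the measure \eqref{eqDistVal} equals $0$, the measure \eqref{eqFunctionVal} equals the optimal value, and $\uSet^i$ contains some optimal integer point $z^*$, whose first-order inequality $\grad(z^*)^\intercal(\,\cdot\, - z^*) \le 0$ appears among those defining $\GP(\uSet^i)$.

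What remains — and what I expect to be the main obstacle — is to rule out an infinite tail in which every $\uSet^i$ ($i \ge N$) contains an optimal integer point, both progress measures are pinned at their smallest possible values, and yet $\uSet^i$ keeps fitting into Table~\ref{tableFlips}. The two measures are by construction blind to progress here, so a separate finiteness argument is needed. I would obtain it by a case analysis of the rows of Table~\ref{tableFlips} that a unimodular set containing a minimizer can match: since $\grad(z^*)^\intercal(\,\cdot\, - z^*) \le 0$ is then one of the defining inequalities, only a short list of configurations survives, and for each surviving configuration I would exhibit a quantity that the corresponding update strictly decreases and that \eqref{eqLevelSets} bounds from below. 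For instance, in Figure~\ref{figExample} the final update $\uSet^2 \to \uSet^3$ pulls the minimizer $(0,-1)$, which lies in $\intr(\GP(\uSet^2))$, into the generating set, after which $\GP(\uSet^3)$ is lattice-free; this suggests that in the minimizer-containing regime the updates steer optimal points of \eqref{eqMainProb} that lie in $\intr(\GP(\uSet^i))$ onto the boundary of $\GP(\uSet^i)$, and that a count along these lines furnishes the required potential. Pinning down this potential and verifying the finitely many cases against the explicit definition of Table~\ref{tableFlips} is the technical heart of the argument.

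Putting the pieces together, the algorithm halts at some $\uSet^T$, which is lattice-free by Theorem~\ref{thmMain1}, and every test performed along the way — including the one certifying that $\uSet^T$ does not fit into Table~\ref{tableFlips} — uses only constantly many gradient evaluations, which is the claim.
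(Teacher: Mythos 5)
Your first two paragraphs are sound and match the paper's opening moves: you correctly reduce the problem to showing that the update process halts, correctly argue via Theorem~\ref{thmMain1}\emph{(i)} and boundedness that \eqref{eqFunctionVal} and \eqref{eqDistVal} can each strictly decrease only finitely often (\eqref{eqDistVal} because it is a non-increasing non-negative integer, \eqref{eqFunctionVal} because of the bounded level set), and correctly invoke Theorem~\ref{thmMain1}\emph{(ii)} to conclude that past some index $N$ every $\uSet^i$ must contain a minimizer of~\eqref{eqMainProb}. This is precisely the content of Theorem~\ref{thm:converge_towards_minimum} in the paper.

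The gap is exactly where you flag it, and you do not close it: you observe that a separate potential is needed to rule out an infinite tail of minimizer-containing sets that keep fitting into Table~\ref{tableFlips}, but you only gesture at ``exhibit a quantity that the corresponding update strictly decreases'' without producing one. That is the actual content of the theorem. The paper's argument is a nested stabilization: once each $\uSet^i$ contains a minimizer, Lemma~\ref{obs:Always_at_least_2} forces the $\uSet^i$ to be connected, hence every further update is a \textbf{Case~\hyperlink{case3}{3}}, \textbf{\hyperlink{case4}{4}}, or \textbf{\hyperlink{case5}{5}} flip. Parts~\emph{(iii)} of Lemmata~\ref{lemPart3ReduceROutcome}, \ref{lemPart4ReduceROutcome}, and \ref{lemPart5ReduceROutcome} guarantee that each such flip produces a \emph{new} vector $w \in \flip(\uSet^i) \cap \GP(\flip(\uSet^i)) \setminus \uSet^i$, and $w$ cannot have the optimal function value. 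One then defines the level sets $\cM^i_1, \dotsc, \cM^i_4$ of $\uSet^i \cap \GP(\uSet^i)$ sorted by function value. Lemma~\ref{lemma:subset} gives $\cM^i_1 \subseteq \cM^{i+1}_1$, so $\cM^i_1$ stabilizes after finitely many steps; once it does, the new vector $w$ must lie in $\cM^{i+1}_2$, and~\eqref{eqLevelSets} bounds the possible function values from below, so $\cM^i_2$ stabilizes; iterating, $\cM^i_3$ and $\cM^i_4$ stabilize. Once $\cM^0_1, \cM^0_2, \cM^0_3, \cM^0_4$ are all non-empty, $|\uSet^0 \cap \GP(\uSet^0)| = 4$ and Corollary~\ref{Cor:4_points_lattice_free} yields lattice-freeness. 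Without this lexicographic potential on the sorted function values of $\uSet^i \cap \GP(\uSet^i)$ (or a substitute that does the same work), the proof does not terminate, so what you have is an outline with the central argument missing rather than a proof.
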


Our updates are conservative, but if $f$ is $L$-Lipschitz continuous and $c$-strongly convex, then only $(6L/c+2) \cdot \|z^*\|_1$ many updates are needed to find an optimal solution $z^*$ (see Proposition~\ref{corConvergence}).
%
Furthermore, our updates can be extended to $d\geq 0$ provided we are able to exactly minimize $f_z(x):= f(x,z)$ over $x\in\R^d$ for each fixed $z \in \Z^2$.
To see this, note that~\eqref{eqMainProb} is the same as minimizing $f^{\min}(z) := \min\{f_z(x):x \in \R^d\}$ over $\Z^2$.

\begin{corollary}\label{cor:ExactMixedInteger}
The updates in Table~\ref{tableFlips} extend to a gradient descent type algorithm when $n = 2$ and $d \ge 0$ if we can exactly minimize $f_z(x)$ over $\R^d$.
\end{corollary}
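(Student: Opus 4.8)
The plan is to reduce the mixed-integer problem with $d\ge 0$ to the pure-integer case $d=0$, $n=2$ that Theorem~\ref{thmMain1} and Theorem~\ref{thmMain2} already handle, by working with the function $f^{\min}(z) := \min\{f_z(x) : x \in \R^d\}$ on $\Z^2$. First I would verify that $f^{\min}$ is a legitimate input for the $d=0$ algorithm: it is convex (partial minimization of a jointly convex function over one block of variables is convex), finite-valued (the minimum is attained for each fixed $z$ by assumption, and is real-valued because the relevant restricted level sets of $f$ are bounded by~\eqref{eqLevelSets}), and differentiable with $\nabla f^{\min}(z) = \nabla_z f(x_z, z)$ where $x_z \in \argmin_x f_z(x)$ — this is the envelope/Danskin-type identity, which holds here because $f$ is differentiable and the inner minimizer is unique up to the value of $\nabla_z f$ (in fact, by convexity, $\nabla_z f(x,z)$ is constant over the set of inner minimizers $x$ for fixed $z$). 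I would also check that $f^{\min}$ inherits Assumption~\eqref{eqLevelSets}: a sublevel set $\{z : f^{\min}(z) \le \alpha\}$ is the projection onto the $z$-coordinates of the sublevel set $\{(x,z) : f(x,z) \le \alpha\}$, which is bounded, so its projection is bounded.

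Next I would observe that minimizing $f^{\min}$ over $\Z^2$ is exactly~\eqref{eqMainProb}: if $z^*$ minimizes $f^{\min}$ over $\Z^2$ and $x^* \in \argmin_x f_{z^*}(x)$, then $(x^*, z^*)$ minimizes $f$ over $\R^d \times \Z^2$, and conversely. Hence an optimality certificate for $f^{\min}$ on $\Z^2$ yields one for~\eqref{eqMainProb}. Running the algorithm of Table~\ref{tableFlips} on $f^{\min}$ produces a sequence of unimodular sets $(\uSet^i)$ and, by Theorem~\ref{thmMain2}, some $T$ with $\GP(\uSet^T)$ lattice-free with respect to $f^{\min}$, i.e. $\uSet^T$ contains a minimizer $z^*$ of $f^{\min}$; lifting $z^*$ by an inner minimization gives an optimizer of~\eqref{eqMainProb}. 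The gradient-descent-type properties transfer directly: each $\GP(\uSet^i)$ still has at most $2^2 = 4$ facets (the unimodular structure is unchanged), the initial $\uSet^0$ is arbitrary, and each update still needs only constantly many evaluations of $\nabla f^{\min}$ — and each such evaluation costs one exact inner minimization $\argmin_x f_z(x)$ plus one evaluation of $\nabla_z f$ at the resulting point, which is the content of the hypothesis "we can exactly minimize $f_z(x)$ over $\R^d$."

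The main obstacle I anticipate is the differentiability of $f^{\min}$ and the well-definedness of its gradient oracle: partial minima of smooth convex functions need not be differentiable in general unless the inner minimizer behaves well, so I would need to argue carefully that $\nabla_z f(x,z)$ does not depend on the choice of $x \in \argmin_x f_z(x)$ and that the map $z \mapsto \nabla f^{\min}(z)$ defined this way is indeed the gradient of $f^{\min}$ (not merely a subgradient). The clean way is: for fixed $z$, any inner minimizer $x_z$ satisfies $\nabla_x f(x_z, z) = \bfzero$ (unconstrained optimum), so $\nabla f(x_z, z) = (\bfzero, \nabla_z f(x_z,z))$; then for any $z'$, convexity of $f$ gives $f^{\min}(z') \ge f(x_{z'}, z') \ge f(x_z, z) + \nabla f(x_z,z)^\intercal (x_{z'} - x_z, z' - z) = f^{\min}(z) + \nabla_z f(x_z, z)^\intercal(z' - z)$, so $\nabla_z f(x_z,z)$ is a subgradient of $f^{\min}$ at $z$; since this holds for every inner minimizer and $f^{\min}$ is convex, combined with a short argument that $f^{\min}$ is differentiable (e.g. it has a unique subgradient, or one invokes that a finite convex function differentiable wherever its subdifferential is a singleton), one concludes $\nabla f^{\min}(z) = \nabla_z f(x_z, z)$ is well-defined. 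With this in hand, everything else is a direct application of the already-proven theorems, so the corollary follows.
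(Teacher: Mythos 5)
Your proposal is correct and follows exactly the reduction the paper uses: replace $f$ by $f^{\min}(z)=\min_x f(x,z)$ and run the $d=0,\,n=2$ machinery. The paper only offers the one-line remark that~\eqref{eqMainProb} is equivalent to minimizing $f^{\min}$ over $\Z^2$; you go further and verify the properties the algorithm actually needs—convexity of $f^{\min}$, finiteness, inheritance of Assumption~\eqref{eqLevelSets} (boundedness of sublevel sets is preserved under projection), and, most substantively, that $f^{\min}$ is differentiable with $\nabla f^{\min}(z)=\nabla_z f(x_z,z)$ for any inner minimizer $x_z$. That last point is the only place where care is genuinely required (the subdifferential of $f^{\min}$ at $z$ is exactly $\{\nabla_z f(x_z,z):x_z\in\argmin_x f(x,z)\}$, and convexity forces this set to be a singleton, so $f^{\min}$ is differentiable), and your sketch of it is sound; alternatively, one could sidestep differentiability entirely by invoking the paper's closing remark that the whole construction works with subgradients, since $\nabla_z f(x_z,z)$ is always a subgradient of $f^{\min}$ at $z$.
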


For the remainder of the paper, we assume $n = 2$ and $d = 0$.
If a unimodular set $\uSet^{i}$ contains a vector $z^*\in \Z^2$ such that $\grad(z^*) = \mathbf{0}$, then $\GP(\uSet^i)$ is lattice-free and $z^*$ is a minimizer of~\eqref{eqMainProb}.
We always assume that this check is made and only consider updating $\uSet^i$ if the check fails.
%

\section{Preliminary results on convexity and gradient polyhedra corresponding to unimodular sets.}

We refer to~\cite{BV2004} for more on convexity and gradients.
Given $z, \overline{z} \in \R^2$, we say that $\overline{z}$ \emph{cuts} $z$ if $\grad(\overline{z})^\intercal(z-\overline{z}) \ge 0$ and \emph{strictly cuts} $z$ if $\grad(\overline{z})^\intercal(z-\overline{z}) > 0$.
The next result follows from the definition of convexity.
\begin{proposition}\label{propConvexProperties}
Let $z, \overline{z} \in \R^2$.
If $\overline{z}$ cuts $z$ (respectively, strictly cuts), then $f(z) \ge f(\overline{z})$ (respectively, $f(z) > f(\overline{z})$).
\end{proposition}

We denote the $i$-th column of $U\in \Z^{2\times 2}$ by $u^i$.
The following result follows from Proposition~\ref{propConvexProperties} and the definition of gradient polyhedra.
%
\begin{lemma}\label{Lemma:Gen_Gradient_Polyhedron}
	Let $\mathcal{X} \subseteq \Z^2$ be a non-empty finite set and $\uSet = \uSet(z,U)$ be unimodular.
The following hold:
	\begin{enumerate}[leftmargin = .75 cm, label = \textit{(\roman*)}]
		\item \label{Prop:partI} If $z \not\in \intr (\GP(\mathcal{X} ))$, then $f(z) \geq \min\{f(\overline{z}): \overline{z} \in \mathcal{X} \}$.
		\item \label{Prop:partII} ${\displaystyle\mathcal{X}  \cap \GP(\mathcal{X} )} \neq \emptyset$.
				%
		%
%
		\item \label{Prop:partIII}If $\uSet \cap \GP(\uSet) = \{z, z+u^1+u^2\}$, then $z+u^i$ does not strictly cut any vector in $\uSet$ for each $i \in \{1,2\}$.
		\item \label{Prop:partIV}If $z$ cuts $z+u^1$ and $z+u^2$, then $z$ strictly cuts $z+u^1+u^2$.
		\item \label{Prop:partV} If $z \in \GP(\uSet)$ and no vector in $\uSet$ is strictly cut by $z$, then $|\uSet \cap \GP(\uSet)| \ge 2$.
	\end{enumerate}
\end{lemma}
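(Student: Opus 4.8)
The plan is to prove each part of Lemma~\ref{Lemma:Gen_Gradient_Polyhedron} in order, using Proposition~\ref{propConvexProperties} and the definitions of $\GP$ and $\intr(\GP)$ as the main tools. For part~\ref{Prop:partI}: if $z \notin \intr(\GP(\mathcal{X}))$, then by definition there is some $\overline{z} \in \mathcal{X}$ with $\grad(\overline{z})^\intercal(z - \overline{z}) \ge 0$, i.e.\ $\overline{z}$ cuts $z$; applying Proposition~\ref{propConvexProperties} gives $f(z) \ge f(\overline{z}) \ge \min\{f(\overline{z}') : \overline{z}' \in \mathcal{X}\}$. For part~\ref{Prop:partII}: take $\overline{z} \in \mathcal{X}$ minimizing $f$ over $\mathcal{X}$; I claim $\overline{z} \in \GP(\mathcal{X})$. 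Indeed, for any $\widetilde{z} \in \mathcal{X}$, convexity gives $f(\overline{z}) \ge f(\widetilde{z}) + \grad(\widetilde{z})^\intercal(\overline{z} - \widetilde{z})$, so $\grad(\widetilde{z})^\intercal(\overline{z} - \widetilde{z}) \le f(\overline{z}) - f(\widetilde{z}) \le 0$ by the choice of $\overline{z}$; hence $\overline{z}$ satisfies every defining inequality of $\GP(\mathcal{X})$.

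Part~\ref{Prop:partIV} is a short computation: adding the two hypotheses $\grad(z)^\intercal(u^1) \ge 0$ and $\grad(z)^\intercal(u^2) \ge 0$ would only give $\grad(z)^\intercal(u^1 + u^2) \ge 0$, which is not strict enough, so I instead argue by contradiction using strictness of convexity — if $\grad(z)^\intercal(u^1+u^2) \le 0$, then combined with, say, $\grad(z)^\intercal u^1 \ge 0$ we get $\grad(z)^\intercal u^2 \le 0$, forcing $\grad(z)^\intercal u^2 = 0$; but then $z$ does not strictly cut $z + u^2$, and I need to derive a contradiction with strict convexity. Here I should be careful: Proposition~\ref{propConvexProperties} only gives $f(z+u^2) \ge f(z)$ from a non-strict cut, which is not yet a contradiction — so the real point is that equality $f(z+u^2)=f(z)$ together with $f$ being (at least) strictly convex on the segment, or rather using that $\grad(z)^\intercal(u^1+u^2) = 0$ would make $z$ cut but not strictly cut $z+u^1+u^2$, is incompatible with the assumption~\eqref{eqLevelSets} only if $f$ is strictly convex. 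I expect this to be the main obstacle: pinning down exactly which convexity property (plain convexity versus strict convexity via~\eqref{eqLevelSets}) is invoked, and showing that if $z$ cuts both $z+u^1$ and $z+u^2$ with at least one cut, then the gradient direction $\grad(z)$ must point strictly into the halfspace excluding $z+u^1+u^2$. The cleanest route is: $\grad(z)^\intercal(u^1+u^2) = \grad(z)^\intercal u^1 + \grad(z)^\intercal u^2 \ge 0$ always, and it is strictly positive unless $\grad(z)^\intercal u^1 = \grad(z)^\intercal u^2 = 0$; in that degenerate case $z$ is a local — hence global — minimizer in the affine hull, and I rule it out (or absorb it) via the standing assumption that $\grad(z) \neq \mathbf{0}$ only excludes $\grad(z)=\mathbf 0$, so I genuinely need strict convexity here, which~\eqref{eqLevelSets} does not by itself guarantee — this tension is what I must resolve, likely by noting the paper's convention that degenerate equalities are handled separately or that $f$ is assumed strictly convex in this part.

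For part~\ref{Prop:partIII}: the hypothesis $\uSet \cap \GP(\uSet) = \{z, z+u^1+u^2\}$ says $z+u^1$ and $z+u^2$ lie outside $\GP(\uSet)$; by part~\ref{Prop:partI} applied with $\mathcal{X} = \uSet$ this gives $f(z+u^i) \ge \min_{\overline{z}\in\uSet} f(\overline{z})$, but I want more. Suppose for contradiction $z+u^1$ strictly cuts some $w \in \uSet$; then $f(w) > f(z+u^1)$ by Proposition~\ref{propConvexProperties}. I then combine this with the fact that $z$ and $z+u^1+u^2$ are in $\GP(\uSet)$, so neither $z+u^1$ nor $z+u^2$ strictly cuts them (being cut is symmetric-in-spirit: $w \in \GP(\uSet)$ means no point of $\uSet$ cuts... wait, $\GP$ is defined by the cut relation with $w$ as the vector being tested). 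Concretely, $z \in \GP(\uSet)$ means $\grad(z+u^1)^\intercal(z - (z+u^1)) \le 0$, i.e.\ $z+u^1$ does not strictly cut $z$; similarly $z+u^1$ does not strictly cut $z+u^1+u^2$; and $z+u^1$ trivially does not strictly cut itself. So the only candidate $w$ that $z+u^1$ could strictly cut is $z+u^2$. Then I use part~\ref{Prop:partIV} (or its contrapositive): $z+u^1$ strictly cutting $z+u^2$ means $\grad(z+u^1)^\intercal(u^2 - u^1) > 0$; I need to leverage that $z+u^1 \notin \GP(\uSet)$, so $z+u^1$ is strictly cut by something, and chase the inequalities to a contradiction with $z, z+u^1+u^2 \in \GP(\uSet)$ — this is the combinatorial heart of the argument and will need the unimodularity only through the fact that $\{u^1, u^2\}$ spans. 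Finally, part~\ref{Prop:partV}: by part~\ref{Prop:partII}, $\uSet \cap \GP(\uSet) \neq \emptyset$; pick $\overline{z}$ in this intersection with minimum $f$-value over $\uSet$ (possible by~\ref{Prop:partII}'s argument the $f$-minimizer is in $\GP(\uSet)$). If $|\uSet \cap \GP(\uSet)| = 1$, say it equals $\{\overline{z}\}$, then the other three vertices of $\uSet$ are outside $\GP(\uSet)$, so each is cut by some vector of $\uSet$; I show that, since $z$ (the given vector, in $\GP(\uSet)$ with no strict cuts) cannot be the cutter with strict inequality, at least one of the three outside vertices is cut by $z$ itself non-strictly but — using part~\ref{Prop:partIV} and a parity/geometry argument on which vertex is diagonally opposite $z$ in the unimodular square — forces a second vertex into $\GP(\uSet)$, contradiction. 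I expect part~\ref{Prop:partIV} to be the subtle one and parts~\ref{Prop:partIII} and~\ref{Prop:partV} to be the longest, both reducing to careful bookkeeping of the "cuts" relation among the four corners of the unimodular parallelogram.
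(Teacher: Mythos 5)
Your arguments for parts \emph{(i)} and \emph{(ii)} are correct and complete. The difficulties arise in parts \emph{(iv)} and \emph{(v)}, and part \emph{(iii)} is sketched but not finished.

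For part \emph{(iv)} you have misdiagnosed the obstacle. You correctly note that the only failure mode is $\grad(z)^\intercal u^1 = \grad(z)^\intercal u^2 = 0$, but you then claim that excluding this degenerate case ``genuinely needs strict convexity'' and that the standing assumption $\grad(z) \neq \mathbf{0}$ is not enough. This is wrong: since $U$ is unimodular, $u^1$ and $u^2$ are linearly independent and span $\R^2$, so $\grad(z)^\intercal u^1 = \grad(z)^\intercal u^2 = 0$ is \emph{equivalent} to $\grad(z) = \mathbf{0}$. The paper's convention (stated just before Section~2) that updates are only performed when no vector of $\uSet$ has zero gradient therefore rules out the degenerate case directly. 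No strict convexity is needed; the whole part is: $\grad(z)^\intercal u^1 \geq 0$, $\grad(z)^\intercal u^2 \geq 0$, at least one strictly positive (else $\grad(z) = \mathbf{0}$), so the sum $\grad(z)^\intercal(u^1 + u^2) > 0$. Identifying the degenerate case but failing to see that linear independence resolves it is a real gap.

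Part \emph{(iii)} is on the right track: you correctly reduce to showing $z+u^1$ does not strictly cut $z+u^2$. The finishing step, which you leave as ``chase the inequalities,'' is short enough to write: $z \in \GP(\uSet)$ gives $\grad(z+u^1)^\intercal u^1 \geq 0$, $z+u^1+u^2 \in \GP(\uSet)$ gives $\grad(z+u^1)^\intercal u^2 \leq 0$, hence $\grad(z+u^1)^\intercal(u^2 - u^1) \leq 0$, so $z+u^1$ cannot strictly cut $z+u^2$. Part \emph{(v)} as you sketch it — invoking part \emph{(iv)} and an unspecified ``parity/geometry argument'' — does not obviously go anywhere: part \emph{(iv)} applies when $z$ \emph{cuts} $z+u^1$ and $z+u^2$, but your hypothesis is that $z$ strictly cuts nothing, which gives $\grad(z)^\intercal u^1 \leq 0$ and $\grad(z)^\intercal u^2 \leq 0$, i.e.\ generically $z$ cuts neither. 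The cleaner argument is purely combinatorial and does not need part \emph{(iv)}: if $\uSet \cap \GP(\uSet) = \{z\}$, then each of the three remaining vertices lies outside $\GP(\uSet)$ and is strictly cut by some vector of $\uSet$; that cutter cannot be $z$ (by hypothesis) nor itself, so it is one of the other two outside vertices. This defines a fixed-point-free self-map of a three-element set, which must contain a cycle $v_1 \to v_2 \to \cdots \to v_k \to v_1$ of length $k \geq 2$, and Proposition~\ref{propConvexProperties} along the cycle gives $f(v_1) > f(v_2) > \cdots > f(v_k) > f(v_1)$, a contradiction. You should replace your sketch of \emph{(v)} with this or an equivalent closed argument.
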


%

Let $\uSet = \uSet(z, U)$ be unimodular.
After multiplying $u^1$ and $u^2$ by $ \pm 1$ and relabeling the `anchor' vector $z$ to be another vector in $\uSet$, we assume $\GP(\uSet)$ fulfills \emph{preprocessing} properties.
\begin{lemma}\label{eqPreprocess}
Let $\uSet = \uSet(z,U)$ be unimodular.
	We can preprocess $\GP(\uSet)$ so that
	\begin{enumerate}[leftmargin = .75 cm, label = \textit{(\roman*)}]
	\item\label{PreProcess1}  $z \in \GP(\uSet)$,
	\item\label{PreProcess2} if $|\uSet \cap \GP(\uSet)| =  2$, then
	\begin{enumerate}[leftmargin = .75 cm, label = \textit{(ii-\alph*)}]
	\item\label{PreProcess2a} $\uSet \cap \GP(\uSet) =  \{z, z+u^1\}$, or
	\item\label{PreProcess2b} $\uSet \cap \GP(\uSet) =  \{z, z+u^1+u^2\}$, $z$ strictly cuts $z+u^1$, and $z+u^1+u^2$ strictly cuts $z+u^2$.
	\end{enumerate}
	\item\label{PreProcess3} if $|\uSet \cap \GP(\uSet)| =  3$, then $\uSet \cap \GP(\uSet) = \{z, z+u^1, z+u^2\}$.
	\end{enumerate}
\end{lemma}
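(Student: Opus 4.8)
The plan is to verify that each preprocessing property can be achieved by the allowed operations — negating columns of $U$ (replacing $u^i$ by $-u^i$) and re-anchoring (replacing $z$ by another vertex of $\conv(\uSet)$) — and that these operations can be applied in sequence without destroying earlier-established properties. Note first that the set $\uSet$ itself, and hence $\GP(\uSet)$, is invariant under all these operations; only the labels of the four points and the columns $u^1, u^2$ change. Also $\uSet$ always has at least one point in $\GP(\uSet)$ by Lemma~\ref{Lemma:Gen_Gradient_Polyhedron}\ref{Prop:partII}, so $|\uSet\cap\GP(\uSet)|\in\{1,2,3,4\}$.

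For~\ref{PreProcess1}, pick any vertex of $\conv(\uSet)$ lying in $\GP(\uSet)$ and relabel it as the anchor $z$; after this relabeling the other three points are $z+u^1$, $z+u^2$, $z+u^1+u^2$ for an appropriate choice of signs on the two primitive edge vectors emanating from $z$, and $|\det U|=1$ is preserved. For~\ref{PreProcess3}, suppose $|\uSet\cap\GP(\uSet)|=3$; I claim the one point \emph{not} in $\GP(\uSet)$ cannot be a "corner" point of the form $z+u^i$ while the opposite corner $z+u^{3-i}$ and the two diagonal points are all in $\GP(\uSet)$ — more precisely, I argue that the point outside $\GP(\uSet)$ must be opposite (across the parallelogram) to $z$. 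Here is the key step: if $z$ and $z+u^1+u^2$ are the two "diagonal" points both in $\GP(\uSet)$ together with exactly one of $z+u^1, z+u^2$, I derive a contradiction with convexity via Lemma~\ref{Lemma:Gen_Gradient_Polyhedron}\ref{Prop:partIV}; this forces the excluded point to be one of $z+u^1+u^2$ or $z$, and after re-anchoring at the "opposite" corner (and flipping signs) we get $\uSet\cap\GP(\uSet)=\{z, z+u^1, z+u^2\}$ — note this re-anchoring keeps $z\in\GP(\uSet)$, so~\ref{PreProcess1} survives.

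For~\ref{PreProcess2}, suppose $|\uSet\cap\GP(\uSet)|=2$, say the two points in $\GP(\uSet)$ are $p$ and $q$. They are either adjacent along an edge of the parallelogram or diagonally opposite. If they are adjacent, re-anchor so that $p=z$ and $q=z+u^1$ (choosing signs of $u^1,u^2$ accordingly); this gives~\ref{PreProcess2a} and preserves~\ref{PreProcess1}. If they are diagonally opposite, re-anchor so that $p=z$ and $q=z+u^1+u^2$; the two excluded points are then $z+u^1$ and $z+u^2$. By Lemma~\ref{Lemma:Gen_Gradient_Polyhedron}\ref{Prop:partIII}, neither $z+u^1$ nor $z+u^2$ strictly cuts anything in $\uSet$ — but wait, that conclusion would put them in a favorable position, so instead I use the contrapositive: since $z+u^1\notin\GP(\uSet)$, some vertex strictly cuts $z+u^1$, and I must show this cut witness can be taken to be $z$; symmetrically for $z+u^2$, with witness $z+u^1+u^2$. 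The main obstacle is precisely this: ruling out the possibility that, say, $z+u^2$ is the vertex strictly cutting $z+u^1$ (and vice versa), which would be incompatible with the labeling in~\ref{PreProcess2b}. I expect to resolve it by a short case analysis: if $z+u^2$ strictly cuts $z+u^1$ then, since also $z+u^2$ cuts itself trivially and (as $z+u^2\notin\GP(\uSet)$) is itself strictly cut, Proposition~\ref{propConvexProperties} gives a chain of strict function-value inequalities around the parallelogram that is cyclic, a contradiction; hence the witnesses must be the diagonal partners $z$ and $z+u^1+u^2$ respectively, and after possibly swapping the roles of $z$ and $z+u^1+u^2$ we obtain exactly~\ref{PreProcess2b}. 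Throughout, each re-anchoring in the diagonal case keeps the chosen anchor inside $\GP(\uSet)$, so~\ref{PreProcess1} is maintained, and the cases of~\ref{PreProcess2} and~\ref{PreProcess3} are mutually exclusive by cardinality, so no conflict arises. \qed
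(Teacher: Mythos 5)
The proposal has two genuine gaps, both in places where the paper uses Lemma~\ref{Lemma:Gen_Gradient_Polyhedron} more directly than you do.

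For part~\ref{PreProcess3}, you claim that if both diagonal vertices of the parallelogram lie in $\GP(\uSet)$ together with exactly one corner, then Lemma~\ref{Lemma:Gen_Gradient_Polyhedron}\ref{Prop:partIV} yields a contradiction. No such contradiction exists, and none is needed. Statement~\ref{Prop:partIV} only says that if $z$ cuts \emph{both} $z+u^1$ and $z+u^2$, then $z$ strictly cuts $z+u^1+u^2$; it imposes no constraint in the 3-point configuration you describe, since the single excluded vertex need only be strictly cut by \emph{some} vertex, and that witness need not put $z$ in the hypothesis of~\ref{Prop:partIV}. The correct argument is purely combinatorial: whichever vertex is excluded, one of the three remaining vertices is diagonally opposite it; re-anchor at that vertex and flip signs so that the two vertices adjacent to it become $z+u^1, z+u^2$. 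Then the excluded vertex is $z+u^1+u^2$, and the new anchor is automatically in $\GP(\uSet)$, so~\ref{PreProcess1} survives. This is the paper's one-line observation that after sign flips any three vertices of $\uSet$ can be relabeled as $z, z+u^1, z+u^2$.

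For part~\ref{PreProcess2b}, your proposal has two problems. First, you invoke Lemma~\ref{Lemma:Gen_Gradient_Polyhedron}\ref{Prop:partIII} and then set it aside, but it is precisely the tool you want: since $\uSet\cap\GP(\uSet)=\{z,z+u^1+u^2\}$, statement~\ref{Prop:partIII} says neither $z+u^1$ nor $z+u^2$ strictly cuts any vector of $\uSet$, which immediately forces the strict-cut witness for each of $z+u^1$ and $z+u^2$ to lie in $\{z, z+u^1+u^2\}$. Your replacement "cyclic chain" argument does not close: if $z+u^2$ strictly cuts $z+u^1$, Proposition~\ref{propConvexProperties} gives $f(z+u^1)>f(z+u^2)$, and the fact that $z+u^2\notin\GP(\uSet)$ gives $f(z+u^2)>f(w)$ for \emph{some} $w\in\uSet$, but $w$ need not be $z+u^1$, so there is no cycle. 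Second, even granting that both witnesses lie in $\{z, z+u^1+u^2\}$, you have not ruled out that $z$ strictly cuts \emph{both} $z+u^1$ and $z+u^2$ (with $z+u^1+u^2$ cutting neither). This is where statement~\ref{Prop:partIV} is essential: if $z$ cut both, then $z$ would strictly cut $z+u^1+u^2$, contradicting $z+u^1+u^2\in\GP(\uSet)$. The paper combines~\ref{Prop:partIII} and~\ref{Prop:partIV} exactly to conclude that each of $z$ and $z+u^1+u^2$ strictly cuts exactly one of $z+u^1, z+u^2$, and then a final swap $u^1\leftrightarrow u^2$ yields~\ref{PreProcess2b}. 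Your sketch skips the~\ref{Prop:partIV} step entirely.

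The remaining pieces — part~\ref{PreProcess1} via~\ref{Prop:partII}, the adjacent-pair case~\ref{PreProcess2a}, and the observation that relabeling preserves the polyhedron — agree with the paper and are fine.
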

\begin{proof}
Lemma~\ref{Lemma:Gen_Gradient_Polyhedron}~\ref{Prop:partII} states $\uSet \cap \GP(\uSet) \neq \emptyset$.
Thus, we can relabel $\uSet$ so that $z \in \uSet \cap \GP(\uSet)$ and~\ref{PreProcess1} holds.
For~\ref{PreProcess2} and~\ref{PreProcess3}, we refer to the following figure:
\begin{center}
		\begin{tikzpicture}[scale = .7]
			\draw[dashed, thick](2,2)--(3,2)--(3,3)--(2,3)--cycle;
			\draw[fill = black, draw = black](2,2) circle (.5 ex) node[left]{$a$};
			\draw[fill = black, draw = black](3,2) circle (.5 ex) node[right]{$b$};
			\draw[fill = black, draw = black](3,3) circle (.5 ex) node[right]{$c$};
			\draw[fill = black, draw = black](2,3) circle (.5 ex) node[left]{$d$};
		\end{tikzpicture}
\end{center}
We have labeled the vectors in $\uSet$ as $a,b,c,d$.
The vectors $b-a = c-d$ and $d-a = c-b$ are either $u^1$ or $u^2$ or a negative thereof.
After possibly multiplying $u^1$ or $u^2$ by $-1$, any three vectors in $\uSet$ can be relabeled to be $z,z+u^1$ and $z+u^2$.
Thus, if $|\uSet \cap \GP(\uSet)|  = 3$, then $\uSet$ can be relabeled to satisfy~\ref{PreProcess3}.
Similarly, if $\uSet \cap \GP(\uSet)$ is equal to $\{a,b\},\{a,d\},\{b,c\}$, or $\{c,d\}$, then $\uSet$ can be relabeled so that~\ref{PreProcess2a} holds.

It is left to consider if $\uSet \cap \GP(\uSet)$ is equal to $\{a,c\}$ or $\{b,d\}$.
Without loss of generality suppose $\uSet \cap \GP(\uSet) = \{a,c\}$.
Relabel $a$ to be $z$, $c$ to be $z+u^1+u^2$, $b$ to be $z+u^1$ and $d$ to be $z+u^2$.
	Lemma~\ref{Lemma:Gen_Gradient_Polyhedron} \ref{Prop:partIII} and \ref{Prop:partIV} imply that $z$ and $z+u^1+u^2$ each strictly cut exactly one of $z+u^1$ and $z+u^2$.
	After possibly relabeling one last time, we may assume that $z$ strictly cuts $z+u^1$ and $z+u^1+u^2$ strictly cuts $z+u^2$.
This proves~\ref{PreProcess2b}.\qed
\end{proof}

We end the section with an observation.

\begin{obs}\label{Observation:interior}
Let $\mathcal{X} \subseteq \Z^2$ be a finite set.
	If $x_1\in \intr(\GP(\mathcal{X}))$, $x_2\dotsc, x_t \in \GP(\mathcal{X})$, $\lambda_1, \dotsc, \lambda_t  > 0$ and $\sum_{i=1}^t \lambda_i =1$, then $\sum_{i=1}^t \lambda_i x_i \in \intr(\GP(\mathcal{X}))$.
\end{obs}

\section{Updating gradient polyhedra when $n = 2$ and $d  = 0$.}\label{secFlippingAlg}
%
Let $\uSet = \uSet(z, U)$ be a unimodular set that satisfies the preprocessing of Lemma~\ref{eqPreprocess}.
We update $\uSet$ by replacing, or `flipping', $U$ with a matrix $\overline{U}$ and then preprocessing $(z, \overline{U})$.
We use $\flip(U)$ to denote the updated matrix $\overline{U}$, and $\flip(\uSet)$ to denote the unimodular set obtained after preprocessing $(z, \overline{U})$.
Table~\ref{tableFlips} defines $\flip(U)$.
Certain flips rely on the following line segments:
\begin{align}
\label{eqH}
	H^i  := \{ z+k  u^1 + i  u^2 \in \intr(\GP(\uSet)) : k \in \R\} \quad \forall ~i \in \{-1,1\}.
\end{align}

 \begin{table}[htbp]
 \begin{tabular}{|@{\hskip 0 cm}l@{\hskip -.1 cm}|}
 \hline\\[-.15 cm]
\begin{tabular}{l@{\hskip .1 cm}l}
\textbf{Case \hypertarget{case1}{1}:} & $\uSet \cap \GP(\uSet) = \{z\}$.\\[.1 cm]
&For each $i \in \{1,2\}$ define $\sigma^i:= 1$ if $\grad(z)^\intercal u^i\le 0$ and $\sigma^i := -1$ otherwise.\\[.1 cm]
%
%
&Set $\flip(U) := (\sigma^1 u^1, \sigma^2 u^2)$.
\end{tabular}

\\[.375 cm]
\hspace{.375 in}
\begin{tabular}{c@{\hskip .35 cm }c@{\hskip .35 cm }c}

		\begin{tikzpicture}[baseline = 10 ex, scale = .7]

			\draw[draw = none, fill = red!30, opacity = .3](1,1)--(1,2+1/2)--(1+1/3, 2+2/3)--(2+2/3, 1+1/3)--(2+1/2,1)--cycle;

			\foreach \i in {1,...,3}{
			\foreach \j in {1,...,3}
			\draw[fill = black!50, draw = black!50](\i,\j) circle (.25 ex);
			}

			\draw[red, thick](1,3)--(3,1); 
			\draw[red, thick](1,2+1/2)--(2.25,3.125); 
			\draw[red, thick](2.75,3.25)--(3.25,2.75); 
			\draw[red, thick](2+1/2,1)--(3.25,2.5); 

			\draw[dashed, thick](2,2)--(3,2)--(3,3)--(2,3)--cycle;
			\draw[draw = black, fill = black](2,2) circle (.5 ex) node[below  left]{$z$};
			\draw[draw = black, fill = black](3,2) circle (.5 ex) node[right , align = left]{$z+u^1$};
			\draw[draw = black, fill = black](2,3) circle (.5 ex) node[above]{$z+u^2$};
			\draw[draw = black, fill = black](3,3) circle (.5 ex);

		\end{tikzpicture}
&
{\large
$\xrightarrow{\text{Update to }\overline{U} := \flip(U)}$
}
&
		\begin{tikzpicture}[baseline = 8 ex, scale = .7]

			\foreach \i in {1,...,3}{
			\foreach \j in {1,...,3}
			\draw[fill = black!50, draw = black!50](\i,\j) circle (.25 ex);
			}

			\draw[fill = white, draw = none](3,1) circle (.5 ex);
			
			\draw[dashed, thick, black!25](2,2)--(3,2)--(3,3)--(2,3)--cycle;
			\draw[dashed, thick, black](2,2)--(1,2)--(1,1)--(2,1)--cycle;
			
			\draw[draw = black, fill = black](2,2) circle (.5 ex) node[above right]{$z$};
			\draw[draw = black, fill = black](1,2) circle (.5 ex) node[left, align = left]{$z+\overline{u}^1$};
			\draw[draw = black, fill = black](2,1) circle (.5 ex) node[right]{$z+\overline{u}^2$};
			\draw[draw = black, fill = black](1,1) circle (.5 ex);

		\end{tikzpicture}
\end{tabular}

\\[.875 cm]

\hline\\[-.15 cm]

\begin{tabular}{l@{\hskip .1 cm}l}
\textbf{Case \hypertarget{case2}{2}:} & $\uSet \cap \GP(\uSet) = \{z, z+u^1+u^2\}$.\\[.1 cm]
& Define $U' := (u^1, u^1 + u^2)$, $\uSet' := \uSet(z,U')$,\\[.1 cm]
& and $U'' := (-u^1, u^1 + u^2)$, $\uSet'' := \uSet(z,U'')$. \\ [.15 cm]
&\textbf{If} $\uSet'$ is connected \textbf{or} $\GP(\uSet') \cap \uSet' = \{{z}+2u^1+u^2\}$ \textbf{or}\\[.15 cm]
&~~~~~$z+u^1+u^2$ strictly cuts $z-u^1$ and $z-u^1$ strictly cuts $z$,\\[.15 cm]
&~~~~~\textbf{then} set $\flip(U) := U'$.\\[.15 cm]
& \textbf{Else if }$\uSet''$ is connected \textbf{or} $\GP(\uSet'') \cap \uSet'' = \{z-u^1\}$ \textbf{or}\\[.15 cm]
 &~~~~~$z$ strictly cuts $z+2u^1+u^2$ and $z+2u^1+u^2$ strictly cuts $z+u^1+u^2$,\\[.15 cm]
&~~~~~\textbf{then} set $\flip(U) := U''$. \\[.15 cm]
&\textbf{Else} set $\flip(U) := (-u^1, 2u^1+u^2)$
\end{tabular}

\\[.5 cm]
\hspace{.35 in}
\begin{tabular}{@{\hskip .5 cm } c@{\hskip .35 cm } c@{\hskip .35 cm } c}

		\begin{tikzpicture}[baseline = 12 ex, scale = .7]

			\draw[draw = none, fill = red!30, opacity = .3](1,1.5)--(1, 2.25)--(3,3)--(3.25, 3)--(3.25,2.625)--cycle;

			\foreach \i in {1,...,3}{
			\foreach \j in {2,...,3}
			\draw[fill = black!50, draw = black!50](\i,\j) circle (.25 ex);
			}
			
			\draw[red, thick](1,1.5)--(3.25,2.625); 
			\draw[red, thick](1,2+1/2)--(2.25,3.125); 
			\draw[red, thick](1,2.25)--(3,3); 
			\draw[red, thick](2.5,1.625)--(3.25,2.125); 

			\draw[dashed, thick](2,2)--(3,2)--(3,3)--(2,3)--cycle;
			\draw[draw = black, fill = black](2,2) circle (.5 ex) node[below ]{$z$};
			\draw[draw = black, fill = black](3,2) circle (.5 ex) node[right , align = left]{$z+u^1$};
			\draw[draw = black, fill = black](2,3) circle (.5 ex) node[above]{$z+u^2$};
			\draw[draw = black, fill = black](3,3) circle (.5 ex);

		\end{tikzpicture}
&
{\large
$\xrightarrow{\text{Update to } \overline{U} : = \flip(U)}$
}
&
	\begin{tikzpicture}[baseline = 12 ex, scale = .7]

			\foreach \i in {1,...,3}{
			\foreach \j in {2,...,3}
			\draw[fill = black!50, draw = black!50](\i,\j) circle (.25 ex);
			}

			\draw[dashed, draw = black!25, thick](2,2)--(3,2)--(3,3)--(2,3)--cycle;
			\draw[dashed, draw = black, thick](2,2)--(1,2)--(2,3)--(3,3)--cycle;
			\draw[draw = black, fill = black](2,2) circle (.5 ex) node[below ]{$z$};
			\draw[draw = black, fill = black](1,2) circle (.5 ex) node[left , align = left]{$z+\overline{u}^1$};
			\draw[draw = black, fill = black](2,3) circle (.5 ex);
			\draw[draw = black, fill = black](3,3) circle (.5 ex) node[above right]{$z+\overline{u}^2$};

		\end{tikzpicture}
\end{tabular}

\\[.75 cm]

\hline\\[-.15 cm]

%
\begin{tabular}{l@{\hskip .1 cm}l}
\textbf{Case \hypertarget{case3}{3}:} & $\uSet \cap \GP(\uSet) = \{z, z+u^1\}$ and $|H^i \cap \Z^2| = 1$ for some $i \in \{-1,1\}$. \\[.15 cm]
& Let $z+ku^1+iu^2 \in H^i \cap \Z^2$.\\[.15 cm]
& Set $\flip(U) := (ku^1 + iu^2, -(k-1)u^1 - iu^2)$.
\end{tabular}

\\[0.5cm]
\hline\\[-.15 cm]

\begin{tabular}{l@{\hskip .1 cm}l}
\textbf{Case \hypertarget{case4}{4}:} & $\uSet \cap \GP(\uSet) =  \{z, z+u^1\}$ and $|H^i \cap \Z^2| \ge 2$ for some $i \in \{-1,1\}$.\\[.15 cm]
& Let $z+ku^1 + iu^2\in H^i \cap \Z^2$ minimize $|k|$.\\[.15 cm]
&\textbf{If} $k \ge 0$,  \textbf{then} set $\flip(U) := (u^1, ku^1 + iu^2)$.\\[.15 cm]
&\textbf{If} $k \le -1$, \textbf{then} set $\flip(U) := (u^1, (k-1)u^1 + iu^2)$.
\end{tabular}

\\[.5 cm]
\hspace{.25 in}
\begin{tabular}{@{\hskip .5 cm } c@{\hskip .35 cm } c@{\hskip .35 cm } c}
		\begin{tikzpicture}[baseline = 12 ex, scale = .7]

			\draw[draw = none, fill = red!30, opacity = .3](5.25, 3)--(1+7/8,1.5) -- (1.75,1.75)--(3.5,3) -- cycle;

			\foreach \i in {2,...,5}{
			\foreach \j in {3}
			\draw[fill = black!50, draw = black!50](\i,\j) circle (.25 ex);
			}

			\draw[red, thick](1.75,1.75)--(3.5,3); 
			\draw[red, thick](2.15, 3.15)--(1.85, 2.85); 
			\draw[red, thick](3.1,3.05)--(2,2.5); 
			\draw[red, thick](5.25, 3)--(1+7/8,1.5); 

			\draw[draw = black, fill = black](2,2) circle (.5 ex) node[ left]{$z$};
			\draw[draw = black, fill = black](3,2) circle (.5 ex) node[right, align = left]{$z+u^1$};
			\draw[draw = black, fill = black](2,3) circle (.5 ex) node[above]{$z+u^2$};
			\draw[draw = black, fill = black](3,3) circle (.5 ex) node[above right]{$z+u^1+u^2$};

			\draw[dashed, thick](2,2)--(3,2)--(3,3)--(2,3)--cycle;

		\end{tikzpicture}
&
{\large
$\xrightarrow{\text{Update to } \overline{U}:= \flip(U)}$
}
&
		\begin{tikzpicture}[baseline = 12 ex, scale = .7]

			\foreach \i in {2,...,5}{
			\foreach \j in {2,...,3}
			\draw[fill = black!50, draw = black!50](\i,\j) circle (.25 ex);
			}

			\draw[draw = black, fill = black](2,2) circle (.5 ex) node[ left]{$z$};
			\draw[draw = black, fill = black](3,2) circle (.5 ex) node[below right = 0]{$z+\overline{u}^1$};
			\draw[draw = black, fill = black](4,3) circle (.5 ex) node[above]{$z+\overline{u}^2$};
			\draw[draw = black, fill = black](5,3) circle (.5 ex);

			\draw[dashed, draw = black!25, thick](2,2)--(3,2)--(3,3)--(2,3)--cycle;
			\draw[dashed, draw = black, thick](2,2)--(4,3)--(5,3)--(3,2)--cycle;

		\end{tikzpicture}
\end{tabular}

\\[.75 cm]
\hline\\[-.15 cm]

\begin{tabular}{l@{\hskip .15 cm}l}
\textbf{Case \hypertarget{case5}{5} :} & $\uSet \cap \GP(\uSet) =  \{z, z+u^1,z+u^2\}$ and $|H^i \cap \Z^2| \geq 1 $ for some $i \in \{-1,1\}$.\\[.15 cm]
& \textbf{If} $ i = 1$, \textbf{then} set $\flip(U) := (u^1, -u^1 + u^2)$.\\ [.15 cm]
& \textbf{Else} set $\flip(U) := (u^1 - u^2, u^2)$.
\end{tabular}

\\[.5 cm]
\hspace{.25 in}
\begin{tabular}{@{\hskip .5 cm } c@{\hskip .35 cm } c@{\hskip .35 cm } c}
		\begin{tikzpicture}[baseline = 10 ex, scale = .7]

			\draw[draw = none, fill = red!30, opacity = .3](2,1)--(2,3)--(2.4, 3.2)--(3.5,1)--cycle;

			\foreach \i in {2,...,3}{
			\foreach \j in {1,...,3}
			\draw[fill = black!50, draw = black!50](\i,\j) circle (.25 ex);
			}

			\draw[red, thick](2,3)--(2,1); 
			\draw[red, thick](1,2+1/2)--(2.4, 3.2); 
			\draw[red, thick](2.75,3.25)--(3.25,2.75); 
			\draw[red, thick](2.4, 3.2)--(3.5,1); 

			\draw[draw = black, fill = black](2,2) circle (.5 ex) node[below  left]{$z$};
			\draw[draw = black, fill = black](3,2) circle (.5 ex) node[right , align = left]{$z+u^1$};
			\draw[draw = black, fill = black](2,3) circle (.5 ex) node[above]{$z+u^2$};
			\draw[draw = black, fill = black](3,3) circle (.5 ex);

			\draw[dashed, thick](2,2)--(3,2)--(3,3)--(2,3)--cycle;

		\end{tikzpicture}
&
{\large
$\xrightarrow{\text{Update to } \overline{U}:= \flip(U)}$
}
&
		\begin{tikzpicture}[baseline = 10 ex, scale = .7]

			\foreach \i in {2,...,3}{
			\foreach \j in {1,...,3}
			\draw[fill = black!50, draw = black!50](\i,\j) circle (.25 ex);
			}

			\draw[dashed, draw = black!25, thick](2,2)--(3,2)--(3,3)--(2,3)--cycle;
			\draw[draw = black, fill = black](2,2) circle (.5 ex) node[below  left]{$z$};
			\draw[draw = black, fill = black](3,1) circle (.5 ex) node[right = 0]{$z+\overline{u}^2$};
			\draw[draw = black, fill = black](2,3) circle (.5 ex) node[above]{$z+\overline{u}^1$};
			\draw[draw = black, fill = black](3,2) circle (.5 ex);

			\draw[dashed, thick](2,2)--(2,3)--(3,2)--(3,1)--cycle;

		\end{tikzpicture}
\end{tabular}

\\[.875 cm]
\hline
\end{tabular}
\caption{The different replacements for $U$ used in $\flip(U)$.
Sample updates are given with the convex hulls of $\uSet(z, U)$ and $\uSet(z, \overline{U})$ in dashed black lines and $\GP(\uSet)$ in red.
}\label{tableFlips}
\end{table}

We say that $\uSet$ is \emph{connected} if $\uSet \cap \GP(\uSet) \supseteq \{z, z+u^1\}$ (\textbf{Cases \hyperlink{case3}{3}} to \textbf{\hyperlink{case5}{5}}).
Otherwise, we say $\uSet$ is \emph{disconnected} (\textbf{Cases \hyperlink{case1}{1}} and \textbf{\hyperlink{case2}{2}}).
Note that $\uSet$ can be connected and not fit into \textbf{Cases \hyperlink{case3}{3}} to \textbf{\hyperlink{case5}{5}}; this occurs when $\uSet \cap \GP(\uSet)  =\uSet$.
On the other hand, if $\uSet$ is disconnected, then it must fit into the table.
Note that if \textbf{Case \hyperlink{case1}{1}} is executed, then the values $\sigma^1, \sigma^2$ defined in the table cannot both equal one by Lemma~\ref{Lemma:Gen_Gradient_Polyhedron}~\ref{Prop:partV}.
The importance of the connected case is that we are able to quickly determine if $\GP(\uSet)$ is lattice-free.
\begin{lemma}\label{lemma:lattice_freeness1}
Let $\uSet = \uSet(z,U)$ be connected and preprocessed as in Lemma~\ref{eqPreprocess}.
Then $\GP(\uSet)$ is lattice-free if and only if $H^{-1} \cap \Z^2 = \emptyset$ and $ H^1 \cap \Z^2 = \emptyset$.
\end{lemma}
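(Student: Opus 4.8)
The plan is to prove both directions of the equivalence, exploiting that connectedness gives $\{z, z+u^1\} \subseteq \uSet \cap \GP(\uSet)$ and hence all of the segment $[z, z+u^1]$ lies in $\GP(\uSet)$. The only integer points of $\R^2$ that could possibly lie in $\intr(\GP(\uSet))$ are those on the lines $z + \R u^1 + i u^2$ for $i \in \Z$, since $\{u^1, u^2\}$ is a lattice basis; and the key geometric claim is that among these, only the rows $i \in \{-1, 0, 1\}$ are candidates, with $i = 0$ automatically ruled out by the preprocessing.

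For the ($\Leftarrow$) direction, suppose $H^{-1} \cap \Z^2 = \emptyset$ and $H^1 \cap \Z^2 = \emptyset$. I would argue that $\intr(\GP(\uSet)) \cap \Z^2 = \emptyset$ by showing the interior cannot meet any row $i \ne 0, \pm 1$, and cannot meet row $0$ either. For row $0$: any integer point $z + ku^1$ with $k \notin \{0,1\}$ lies outside $[z,z+u^1]$; I would use that $z$ and $z+u^1$ are in $\GP(\uSet)$ but, because $\uSet \cap \GP(\uSet)$ is connected and preprocessed, $z - u^1$ and $z + 2u^1$ (and beyond) are cut by some vector of $\uSet$ — more carefully, I would invoke that if $z+ku^1 \in \intr(\GP(\uSet))$ for some $k \ge 2$, then by Observation~\ref{Observation:interior} applied to a convex combination with $z \in \GP(\uSet)$ we could push a point of the open segment past $z+u^1$, forcing $z+u^1 \in \intr(\GP(\uSet))$ and hence (again by the Observation) $\intr(\GP(\uSet))$ would contain arbitrarily large multiples, contradicting Assumption~\eqref{eqLevelSets} via boundedness of level sets (Proposition~\ref{propConvexProperties} relates cuts to function values). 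Rows $|i| \ge 2$ are handled the same way: if $z + ku^1 + iu^2 \in \intr(\GP(\uSet))$ with $|i| \ge 2$, then the segment from this point to a point of $[z,z+u^1] \subseteq \GP(\uSet)$ crosses row $\pm 1$ at an \emph{interior} point (Observation~\ref{Observation:interior}), and I can choose the crossing to be at an integer point of $H^{\pm 1}$ — here is where I must be slightly careful: the crossing of row $i=\pm1$ need not a priori be integral, so instead I would argue that if row $i$ ($|i|\ge 2$) contains an interior integer point then so does row $\mathrm{sign}(i)\cdot 1$, using that the interior restricted to each line $z + \R u^1 + j u^2$ is an open interval (intersection of halfplanes with a line) and that these intervals, as functions of $j$, form a "nested/shifting" family whose nonemptiness propagates toward $j = 0$; more concretely, convexity of $\GP(\uSet)$ plus $z, z+u^1 \in \GP(\uSet)$ forces the interior interval on row $1$ to contain the $u^1$-translate of any interior point on row $2$. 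I expect the cleanest route is: take an interior point $p$ on row $i$, $i \ge 2$; then $\tfrac{i-1}{i} z + \tfrac1i p \in \intr(\GP(\uSet))$ lies on row $1$, and by rounding along $u^1$ (the interior interval on row $1$ is open and, I claim, has length $\ge 1$ once it is nonempty and meets the "column range" of $[z,z+u^1]$) we obtain an integer point of $H^1$, contradiction.

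For the ($\Rightarrow$) direction, this is immediate: if $H^{-1} \cap \Z^2 \ne \emptyset$ or $H^1 \cap \Z^2 \ne \emptyset$, then by definition \eqref{eqH} there is an integer point in $\intr(\GP(\uSet))$, so $\GP(\uSet)$ is not lattice-free. Contrapositively, lattice-freeness forces both intersections empty.

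The main obstacle is the claim, in the ($\Leftarrow$) direction, that nonemptiness of the open interior interval on row $i$ with $|i| \ge 2$ forces an \emph{integer} point of $\intr(\GP(\uSet))$ on row $\mathrm{sign}(i)$. This requires showing the relevant interior interval has length at least one (or at least contains an integer), which I would establish from the connectedness hypothesis $[z, z+u^1] \subseteq \GP(\uSet)$ together with a convexity argument (Observation~\ref{Observation:interior}): the interior interval on row $1$ contains the open segment obtained by averaging $[z,z+u^1]$ (length one in $u^1$-coordinates) with the single interior point on row $i$, and a length-one open interval that is an integer translate away from the lattice row must contain a lattice point — here I would need to pin down the $u^1$-coordinate bookkeeping so that the averaged segment indeed has $u^1$-length exactly $\tfrac1i \cdot 0 + \tfrac{i-1}{i}\cdot 1 < 1$, which is \emph{not} quite enough, so the sharper statement I will prove is that the interior interval on row $1$ strictly contains the $u^1$-translate of the row-$i$ interior point by an amount bounded below by the "width" of $[z,z+u^1]$ seen from that point, iterating down from row $i$ to row $1$ one step at a time (each step averages with the full row below, which once it is all of an interval of $u^1$-length $\ge 1$ keeps the property). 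I will write this iteration carefully since it is the crux; everything else is bookkeeping with the definitions of $\GP$, $\intr(\GP)$, and preprocessing.
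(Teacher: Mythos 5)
Your easy direction ($\Rightarrow$) is fine, and your treatment of the row-$0$ case is essentially the same as the paper's (though you invoke level-set boundedness and Assumption~\eqref{eqLevelSets}, which is unnecessary: if $z + ku^1 \in \intr(\GP(\uSet))$ with $k \ge 2$, then $z+u^1 \in \conv\{z, z+ku^1\}$ and Observation~\ref{Observation:interior} puts $z+u^1$ in $\intr(\GP(\uSet))$, which is immediately a contradiction since $z+u^1 \in \uSet$).

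The genuine gap is exactly the one you flag yourself: the case of an interior lattice point $x = z + k_1 u^1 + k_2 u^2$ with $|k_2| \ge 2$. Your averaging argument produces, on row $\mathrm{sign}(k_2)$, an open interval of $u^1$-length $\frac{|k_2|-1}{|k_2|} < 1$, which is not guaranteed to contain an integer, and the ``iteration'' you sketch does not repair this: averaging with the row-$0$ segment always contracts by the same factor (you never enter a regime where a row's interior interval is known to have length $\ge 1$, and indeed the open segment $(z, z+u^1)$ need not even lie in $\intr(\GP(\uSet))$, only in $\GP(\uSet)$). So the crux you defer is precisely the step that fails. The paper handles this with a different idea: form the lattice triangle $T = \conv\{z, z+u^1, x\}$, observe $|\det(z-x, (z+u^1)-x)| = |k_2| \ge 2$, and invoke a lattice-point counting result (the Barvinok reference, or Pick's theorem) to produce a lattice point $\overline{x} \in T \setminus \{z, z+u^1, x\}$; since $U$ is unimodular there are no lattice points on the open segment $(z, z+u^1)$, so $\overline{x}$ has $u^2$-coordinate $k_2'$ with $0 < |k_2'| < |k_2|$, hence $\overline{x} \in \intr(\GP(\uSet))$ by Observation~\ref{Observation:interior}. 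Choosing $x$ to minimize $|k_2|$ at the outset then yields the contradiction. Your proposal is missing both the counting ingredient and the minimality device, and without one of them the row-by-row reduction does not close.
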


\begin{proof}
	If $\GP(\uSet)$ is lattice-free, then $H^{-1} \cap \Z^2$ and $H^1 \cap \Z^2$ are empty.
	Assume to the contrary that $\GP(\uSet)$ is not lattice-free but $H^{-1} \cap \Z^2 = H^1 \cap \Z^2 = \emptyset$.
	Let ${x} \in \intr(\GP(\uSet)) \cap \Z^{2}$ be of any vector of the form ${x} = z + k_1u^1 + k_2u^2 $, where $k_1,k_2 \in \Z$ and such that $|k_2|$ is minimized.
	Note that $|k_2| \neq 1$ because $H^{-1} \cap \Z^2 = H^1 \cap \Z^2 = \emptyset$.
	If $k_2 = 0$ and $k_1 < 0$, then $z$ is a convex combination of $x$ and $z+u^1$ and is contained in $\intr(\GP(\uSet))$, which is a contradiction.
	Similarly, if $k_2 = 0$ and $k_1 \ge 1$, then $z+u^1 \in \intr(\GP(\uSet))$.
	Hence, $|k_2| \ge 2$.
	
	Both $z$ and $z+u^1$ are in $\GP(\uSet)$ because $\uSet$ is connected.
	The set $\conv\{z, z+u^1, {x}\}$ is contained in $\GP(\uSet)$ because $\GP(\uSet)$ is convex.
	From the linearity of the determinant and the equation $|\det(U)| = 1$, it follows that
	\[
		|\det(z - {x}, (z + u^1)- {x})| =|\det(-k_1u^1-k_2u^2,-(k_1-1)u^1-k_2u^2)| = |k_2| \geq 2.
	\]
	This implies that $\conv\{z, z+u^1, {x}\} \setminus \{z, z+u^1, {x}\} $ contains an integer vector $\overline{x} := z+k'_1u^1 + k'_2u^2$ with $k'_1, k'_2 \in \Z$ (see, e.g.,~\cite[Page 291, Corollary (2.6)]{barv2002}).
There are no integer vectors in $\conv\{z, z+u^1\} \setminus \{z,z+u^1\}$ because $U$ is unimodular, so $0 < |k'_2|<|k_2|$.
	Hence, $\overline{x} \in \intr(\GP(\uSet))$, which contradicts the choice of $k_2$. \qed
\end{proof}

Lemma~\ref{Lemma:Gen_Gradient_Polyhedron}~\ref{Prop:partII} implies $1 \le |\uSet \cap \GP(\uSet)| \le 4$.
However, Table~\ref{tableFlips} does not consider $|\uSet \cap \GP(\uSet)| = 4$ because $\GP(\uSet)$ is lattice-free in this setting.
\begin{corollary}\label{Cor:4_points_lattice_free}
		If $|\uSet \cap \GP(\uSet)| = 4$, then $\GP(\uSet)$ is lattice-free.
\end{corollary}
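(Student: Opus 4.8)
The plan is to reduce Corollary~\ref{Cor:4_points_lattice_free} to Lemma~\ref{lemma:lattice_freeness1} by showing that when $|\uSet \cap \GP(\uSet)| = 4$, the set $\uSet$ is connected (and, after relabeling, still preprocessed), and that the two segments $H^{-1}$ and $H^1$ from~\eqref{eqH} contain no integer vectors. Since $\uSet\cap\GP(\uSet)=\uSet$, in particular $\{z,z+u^1\}\subseteq \uSet\cap\GP(\uSet)$, so $\uSet$ is connected by definition; and the preprocessing of Lemma~\ref{eqPreprocess} is vacuously compatible here since every vector of $\uSet$ lies in $\GP(\uSet)$, so no case of Lemma~\ref{eqPreprocess} imposes a nontrivial constraint beyond~\ref{PreProcess1}, which we may assume. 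Thus Lemma~\ref{lemma:lattice_freeness1} applies, and it suffices to prove $H^{-1}\cap\Z^2 = H^1\cap\Z^2 = \emptyset$.

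First I would show no vector of the form $z + ku^1 + u^2$ with $k\in\Z$ lies in $\intr(\GP(\uSet))$, and symmetrically for $z+ku^1-u^2$. The key point is that $z+u^2 \in \uSet \subseteq \GP(\uSet)$ and $z+u^1+u^2\in\uSet\subseteq\GP(\uSet)$, so every such vector $z+ku^1+u^2$ is a point on the line through these two boundary-or-interior lattice points. If such a vector were in $\intr(\GP(\uSet))$, then by Observation~\ref{Observation:interior}, writing any of $z+u^2$ or $z+u^1+u^2$ as a strict convex combination of an interior point and a point of $\GP(\uSet)$ would force that boundary vector into the interior as well — but I need to be a little careful about when such a convex-combination representation exists. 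Concretely: if $k\le 0$ then $z+u^2$ is a strict convex combination of $z+ku^1+u^2$ (interior, by assumption) and $z+u^1+u^2$ (in $\GP(\uSet)$); if $k\ge 1$ then $z+u^1+u^2$ is a strict convex combination of $z+u^2$ (in $\GP(\uSet)$) and $z+ku^1+u^2$ (interior); either way Observation~\ref{Observation:interior} gives an interior lattice point among $\{z+u^2, z+u^1+u^2\}$. But $z+u^2$ and $z+u^1+u^2$ are in $\uSet\subseteq\Z^2$, and $\uSet\subseteq\GP(\uSet)$ means none of them can lie in $\intr(\GP(\uSet))$ unless $\GP(\uSet)$ has empty interior — and if $\intr(\GP(\uSet))=\emptyset$ then trivially $\GP(\uSet)$ is lattice-free and we are done. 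So assuming $\intr(\GP(\uSet))\neq\emptyset$, we reach a contradiction, giving $H^1\cap\Z^2=\emptyset$; the case $i=-1$ using $z$ and $z+u^1$ in place of $z+u^2$ and $z+u^1+u^2$ is identical.

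Having established $H^{-1}\cap\Z^2 = H^1\cap\Z^2 = \emptyset$, Lemma~\ref{lemma:lattice_freeness1} immediately yields that $\GP(\uSet)$ is lattice-free, completing the proof. The main obstacle I anticipate is the bookkeeping in the second paragraph: making sure that for every integer $k$ the relevant boundary lattice point ($z+u^2$ or $z+u^1+u^2$) genuinely lies strictly between the putative interior point and the other boundary point, so that Observation~\ref{Observation:interior} applies with strictly positive coefficients — and handling the degenerate-but-easy case $\intr(\GP(\uSet))=\emptyset$ up front so that "$z+u^2\in\uSet\subseteq\GP(\uSet)$ implies $z+u^2\notin\intr(\GP(\uSet))$" is actually a contradiction rather than a tautology. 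Everything else is a direct invocation of the two preceding results.
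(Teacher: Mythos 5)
Your proof is correct and follows essentially the same route as the paper's: observe that $|\uSet\cap\GP(\uSet)|=4$ forces $\uSet$ to be connected, use a convex-combination argument with Observation~\ref{Observation:interior} to show $H^{\pm 1}\cap\Z^2=\emptyset$, and then invoke Lemma~\ref{lemma:lattice_freeness1}. One small remark: the case-split on whether $\intr(\GP(\uSet))$ is empty is unnecessary, since any $w\in\uSet$ automatically fails its own strict constraint $\grad(w)^\intercal(w-w)<0$ and hence lies outside $\intr(\GP(\uSet))$ regardless; this also disposes of the degenerate values $k\in\{0,1\}$ (resp.\ $k\in\{0,-1\}$ for $H^{-1}$) without needing the strict-convex-combination argument.
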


\begin{proof}
	If $|\uSet \cap \GP(\uSet)| = 4$, then $\uSet$ is connected.
Assume to the contrary that $z+ku^1+u^2 \in H^{1} \cap \Z^2$ for $k \ge 1$.
Then $z+u^1+u^2 \in \conv\{z+u^2, z+ku^1+u^2\}$, and Observation~\ref{Observation:interior} implies that $z+u^1+u^2 \in \intr(\GP(\uSet))$, which is a contradiction.
Using similar arguments, it can be shown that $H^{1} \cap \Z^2 = H^{-1} \cap \Z^2 = \emptyset$.
Hence, $\GP(\uSet)$ is lattice-free by Lemma~\ref{lemma:lattice_freeness1}.
\qed
\end{proof}

The previous results show that the cases in Table~\ref{tableFlips} suffice.

\begin{lemma}\label{lemNoCase}
Let $\uSet $ be preprocessed as in Lemma~\ref{eqPreprocess}.
If $\uSet$ does not fit into a case of Table~\ref{tableFlips}, then $\GP(\uSet)$ is lattice-free.
\end{lemma}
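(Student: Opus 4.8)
The plan is to do a case analysis on $|\uSet \cap \GP(\uSet)|$, which by Lemma~\ref{Lemma:Gen_Gradient_Polyhedron}~\ref{Prop:partII} and the fact that $\uSet$ has four elements lies in $\{1,2,3,4\}$. In each case I want to argue that either $\uSet$ fits into one of the cases of Table~\ref{tableFlips} or $\GP(\uSet)$ is already lattice-free. The preprocessing from Lemma~\ref{eqPreprocess} pins down exactly what $\uSet \cap \GP(\uSet)$ can look like in the $|{\cdot}| = 2$ and $|{\cdot}| = 3$ situations, so the bookkeeping is manageable.

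First I would handle the extremes. If $|\uSet \cap \GP(\uSet)| = 4$, then $\GP(\uSet)$ is lattice-free by Corollary~\ref{Cor:4_points_lattice_free}, so there is nothing to prove. If $|\uSet \cap \GP(\uSet)| = 1$, then by preprocessing property~\ref{PreProcess1} the unique point is $z$, so $\uSet \cap \GP(\uSet) = \{z\}$, which is exactly \textbf{Case~\hyperlink{case1}{1}}; hence $\uSet$ always fits into the table here.

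Next, the case $|\uSet \cap \GP(\uSet)| = 3$. Preprocessing property~\ref{PreProcess3} gives $\uSet \cap \GP(\uSet) = \{z, z+u^1, z+u^2\}$, so in particular $\uSet$ is connected. If $\uSet$ fits \textbf{Case~\hyperlink{case5}{5}}, we are done; otherwise $|H^i \cap \Z^2| = 0$ for both $i \in \{-1,1\}$, i.e.\ $H^{-1}\cap\Z^2 = H^1\cap\Z^2 = \emptyset$, and then Lemma~\ref{lemma:lattice_freeness1} (applicable since $\uSet$ is connected and preprocessed) gives that $\GP(\uSet)$ is lattice-free. Finally, the case $|\uSet \cap \GP(\uSet)| = 2$. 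By preprocessing property~\ref{PreProcess2}, either $\uSet \cap \GP(\uSet) = \{z, z+u^1\}$ (connected) or $\uSet \cap \GP(\uSet) = \{z, z+u^1+u^2\}$ (disconnected) with the additional strict-cut conditions of~\ref{PreProcess2b}. In the disconnected subcase this is precisely \textbf{Case~\hyperlink{case2}{2}}. In the connected subcase $\uSet \cap \GP(\uSet) = \{z, z+u^1\}$: if some $|H^i \cap \Z^2| = 1$ this is \textbf{Case~\hyperlink{case3}{3}}, if some $|H^i \cap \Z^2| \ge 2$ this is \textbf{Case~\hyperlink{case4}{4}}, and otherwise $H^{-1}\cap\Z^2 = H^1\cap\Z^2 = \emptyset$ so $\GP(\uSet)$ is lattice-free by Lemma~\ref{lemma:lattice_freeness1} again.

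The only mild subtlety — and the step I would be most careful about — is confirming that the table's case conditions are jointly exhaustive within each fixed value of $|\uSet \cap \GP(\uSet)|$, i.e.\ that the dichotomy ``fits a case'' vs.\ ``$H^{-1}\cap\Z^2 = H^1\cap\Z^2 = \emptyset$, hence lattice-free'' really does partition every preprocessed $\uSet$. This is where the preprocessing lemma is doing the heavy lifting: without it, the connected case with $|{\cdot}|=2$ could a priori have $\uSet \cap \GP(\uSet) = \{z, z+u^2\}$ or $\{z+u^1, z+u^1+u^2\}$, etc., and the $|{\cdot}|=3$ case could have the ``missing'' vertex be $z$; property~\ref{PreProcess3} and~\ref{PreProcess2} rule these out and force the normalized forms that Table~\ref{tableFlips} is written against. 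So the proof is essentially an enumeration gated by Lemma~\ref{eqPreprocess}, with Corollary~\ref{Cor:4_points_lattice_free} and Lemma~\ref{lemma:lattice_freeness1} supplying the lattice-freeness conclusions whenever no case applies.
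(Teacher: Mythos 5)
Your proof is correct and is essentially the same argument the paper gives, just unfolded in more detail: the paper condenses the enumeration into a two-line dichotomy (connected sets are handled by Lemma~\ref{lemma:lattice_freeness1} or Corollary~\ref{Cor:4_points_lattice_free}; disconnected sets always fall into \textbf{Case~\hyperlink{case1}{1}} or \textbf{\hyperlink{case2}{2}}), while you walk through each possible value of $|\uSet \cap \GP(\uSet)|$ to verify that the preprocessing of Lemma~\ref{eqPreprocess} makes the table's conditions exhaustive. Both proofs rest on exactly the same three ingredients, so there is no substantive difference in approach.
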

\begin{proof}
If $\uSet$ is connected, then the conditions of Lemma~\ref{lemma:lattice_freeness1} or Corollary~\ref{Cor:4_points_lattice_free} are met and $\GP(\uSet)$ is lattice-free.
	Otherwise, $\uSet$ is disconnected and falls into a case of the table.
	\qed
\end{proof}

In the remainder of this section we consider the set $\flip(\uSet)$ for every case in the table.
For this analysis, we briefly elaborate on the two measures of progress to prove Theorem~\ref{thmMain1} \emph{(ii)} and \emph{(iii)}.
The first measure is the smallest function value in $\uSet$.
Given another unimodular set ${\uSet}'$, we say
\begin{equation}\label{eqfNotation}
\uSet \lf \uSet' \text{ if } \min\{f(z) : z \in \uSet\} < \min\{f(z) : z \in \uSet'\}.
\end{equation}
We define $\uSet \lef \uSet'$ and $\uSet\ef \uSet'$ similarly.
Lemma~\ref{lemma:subset} establishes the inequality $\flip(\uSet) \lef \uSet$ for every case.
The proof follows directly from the definitions in the table and Lemma~\ref{propConvexProperties}.
\begin{lemma}\label{lemma:subset}
Let $\uSet $ be preprocessed as in Lemma~\ref{eqPreprocess}.
Then $\uSet \cap \GP(\uSet) \subseteq \flip(\uSet)$ and $\flip(\uSet) \leq_f \uSet$.
\end{lemma}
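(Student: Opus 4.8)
The plan is to verify the two claims of Lemma~\ref{lemma:subset} case by case, following the structure of Table~\ref{tableFlips}. The first claim, $\uSet \cap \GP(\uSet) \subseteq \flip(\uSet)$, asks us to check that every vector of $\uSet$ lying in $\GP(\uSet)$ survives the flip (possibly under relabeling, since $\flip(\uSet)$ is $\uSet(z,\overline{U})$ \emph{after} the preprocessing of Lemma~\ref{eqPreprocess}); the second claim, $\flip(\uSet) \lef \uSet$, then follows because a vector attaining the minimum of $f$ over $\uSet \cap \GP(\uSet)$ lies in $\flip(\uSet)$, and by Lemma~\ref{Lemma:Gen_Gradient_Polyhedron}~\ref{Prop:partI} any vector of $\uSet$ \emph{not} in $\GP(\uSet) \supseteq \intr(\GP(\uSet))$ has function value at least $\min\{f(\overline z):\overline z \in \uSet\cap\GP(\uSet)\}$, so the minimum of $f$ over $\uSet$ is already attained on $\uSet\cap\GP(\uSet)$.

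For the containment, I would go case by case using the explicit lattice geometry of the flips pictured in Table~\ref{tableFlips}. In \textbf{Case~\hyperlink{case1}{1}}, $\uSet \cap \GP(\uSet) = \{z\}$ and $z = z + \sigma^1 u^1\cdot 0 + \sigma^2 u^2 \cdot 0 = z + \overline U e$ with $e = \bfzero$, so $z \in \flip(U)$'s set. In \textbf{Case~\hyperlink{case2}{2}}, $\uSet\cap\GP(\uSet) = \{z, z+u^1+u^2\}$; for the choices $\overline U = U' = (u^1, u^1+u^2)$ or $\overline U = U'' = (-u^1, u^1+u^2)$ one has $z = z+\overline U\bfzero$ and $z+u^1+u^2 = z + \overline U e$ for $e=(0,1)$ in both cases, and for the last option $\overline U = (-u^1, 2u^1+u^2)$ the vector $z+u^1+u^2 = z + (-u^1)\cdot 1 + (2u^1+u^2)\cdot 1$ lies in $\uSet(z,\overline U)$ as well, so both surviving points are present. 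In \textbf{Cases~\hyperlink{case3}{3}} and \textbf{\hyperlink{case4}{4}}, $\uSet\cap\GP(\uSet) = \{z, z+u^1\}$, and in each definition $\overline u^1 = u^1$ or $\overline u^1$ together with another column recovers $u^1$ as $z + \overline U e$ for a suitable $e\in\{0,1\}^2$ — e.g. in Case~4 with $k\ge 0$, $z+u^1 = z + \overline u^1\cdot 1$; with $k\le -1$, one checks $z+u^1$ is still a vertex of the new box — and $z = z+\overline U\bfzero$. In \textbf{Case~\hyperlink{case5}{5}}, $\uSet\cap\GP(\uSet) = \{z, z+u^1, z+u^2\}$, and with $\overline U = (u^1, -u^1+u^2)$ (when $i=1$) we have $z+u^1 = z+\overline u^1$ and $z+u^2 = z + \overline u^1 + \overline u^2$, while with $\overline U = (u^1-u^2, u^2)$ (when $i=-1$) we have $z+u^2 = z+\overline u^2$ and $z+u^1 = z + \overline u^1 + \overline u^2$; in both, $z = z+\overline U\bfzero$, so all three survive. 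Finally, since preprocessing merely relabels the anchor among vectors of the set and multiplies columns by $\pm1$, it does not remove any lattice point of $\uSet(z,\overline U)$, so the containment is preserved through preprocessing.

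For the inequality $\flip(\uSet)\lef\uSet$: let $z^\circ\in\uSet\cap\GP(\uSet)$ minimize $f$ over $\uSet\cap\GP(\uSet)$. By the containment just established, $z^\circ\in\flip(\uSet)$, hence $\min\{f(z):z\in\flip(\uSet)\}\le f(z^\circ)$. On the other hand, by Lemma~\ref{Lemma:Gen_Gradient_Polyhedron}~\ref{Prop:partI}, every $\overline z\in\uSet$ with $\overline z\notin\intr(\GP(\uSet))$ satisfies $f(\overline z)\ge\min\{f(z):z\in\uSet\}$ vacuously, so one argues instead directly: any $\overline z\in\uSet\setminus\GP(\uSet)$ is strictly cut by some vector of $\uSet$ (since $\overline z\notin\GP(\uSet)$ means $\grad(\overline z')^\intercal(\overline z-\overline z')>0$ for some $\overline z'\in\uSet$), whence $f(\overline z)>f(\overline z')\ge f(z^\circ)$ by Proposition~\ref{propConvexProperties} and the fact that $\overline z'\in\uSet\cap\GP(\uSet)$ whenever $\overline z'$ strictly cuts something — wait, that needs care; cleaner is to note $z^\circ$ minimizes $f$ over all of $\uSet$ by Proposition~\ref{propConvexProperties} applied along cutting chains, or simply cite Lemma~\ref{Lemma:Gen_Gradient_Polyhedron}~\ref{Prop:partI} with $\mathcal X = \uSet\cap\GP(\uSet)$ to get that any $z\in\uSet\setminus\intr(\GP(\uSet))$ has $f(z)\ge f(z^\circ)$. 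Then $\min\{f(z):z\in\flip(\uSet)\}\le f(z^\circ) = \min\{f(z):z\in\uSet\}$, i.e. $\flip(\uSet)\lef\uSet$. The only real obstacle is the bookkeeping in \textbf{Cases~\hyperlink{case2}{2}} and~\textbf{\hyperlink{case4}{4}}: one must confirm, across all the sub-branches of the flip definition, that the two (or three) surviving vertices of $\uSet\cap\GP(\uSet)$ are genuinely among the four lattice points of the new parallelogram $\uSet(z,\overline U)$, which is a finite check of membership in $\{0,1\}^2$-combinations but must be done for each branch.
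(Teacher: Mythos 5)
Your case-by-case verification of the containment $\uSet\cap\GP(\uSet)\subseteq\flip(\uSet)$ is exactly the argument the paper leaves implicit (the paper offers only a one-line remark that the lemma follows from the table and Proposition~\ref{propConvexProperties}), and your conclusion for $\flip(\uSet)\lef\uSet$ is correct, so the proposal is sound and follows the paper's intended route. The middle of your second half should be streamlined: rather than defining $z^\circ$ as a minimizer over $\uSet\cap\GP(\uSet)$ and then worrying about whether it also minimizes over $\uSet$, take $w^*$ minimizing $f$ over all of $\uSet$; if $w^*\notin\GP(\uSet)$, some $\overline z\in\uSet$ strictly cuts $w^*$, so Proposition~\ref{propConvexProperties} gives $f(w^*)>f(\overline z)$, contradicting minimality, whence $w^*\in\uSet\cap\GP(\uSet)\subseteq\flip(\uSet)$ and the inequality is immediate.
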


Our second measure of progress is~\eqref{eqDistVal}.
To motivate this, define the \emph{orthants} corresponding to $\uSet = \uSet(z,U)$:
\begin{equation}\label{eqQuadrants}
\begin{array}{rcll}
O_\uSet(z) & := & \{z + U r : r \in \R_{\le 0} \times \R_{\le 0}\},\\[.1 cm]
O_\uSet(z+u^1) & := &\{(z+u^1) + U r : r \in \R_{\ge 0} \times \R_{\le 0}\},\\[.1 cm]
O_\uSet(z+u^2) & := &\{(z+u^2) +U r: r \in \R_{\le 0} \times \R_{\ge 0}\}, \text{ and}\\[.1 cm]
O_\uSet(z+u^1+u^2) & := &\{(z+u^1+u^2) + Ur : r \in \R_{\ge 0} \times \R_{\ge 0}\}.
\end{array}
\end{equation}
See Figure~\ref{figCase1} \emph{(i)} for an example.
The matrix $U$ is unimodular, so for every $x \in \Z^2$ and $w \in \uSet$ the difference vector $x-w$ is an integer combination of $\|U^{-1}(x-w)\|_1$ many signed copies of $u^1$ and $u^2$.
Moreover, there is a unique $w^* \in \uSet$ such that $x \in O_{\uSet}(w^*)$, and this $w^*$ minimizes $\|U^{-1}(x-w)\|_1$ over $w \in \uSet$.
Denote this minimum value by
\[
r_\uSet( x) := \min\{\|U^{-1}(x-w)\|_1 : w \in \uSet\}.
\]
The value in~\eqref{eqDistVal} equals $\min\{r_\uSet(z^*) : z^* \text{ optimal for}~\eqref{eqMainProb}\}$.
Note $r_\uSet(x) = 0$ if and only if $x \in \uSet$.
%
%
If
\begin{equation}\label{eqfNotation2}
\begin{array}{rclcl}
&&\min\{ & r_\uSet(z^*) &: z^* \text{ is optimal for }\eqref{eqMainProb}\} \\
&< & \min\{& r_{\uSet'}(z^*) &: z^* \text{ is optimal for }\eqref{eqMainProb}\},
\end{array}
\end{equation}
then we write $\uSet\lr \uSet'$.
We define $\uSet\ler \uSet'$ and $\uSet \er \uSet'$ similarly.
The minima in~\eqref{eqfNotation2} exist due to Assumption~\eqref{eqLevelSets}.

The next result follows from the definition of $\er$ and Lemmata~\ref{Lemma:Gen_Gradient_Polyhedron}\ref{Prop:partI} and~\ref{lemma:subset}.
\begin{lemma}\label{lemContainMin}
If $\uSet$ contains a minimizer $z^*$ of~\eqref{eqMainProb}, then $z^* \in \flip(\uSet)$ and $r_{\uSet}(z^*) = r_{\flip(\uSet)}(z^*) = 0$.
\end{lemma}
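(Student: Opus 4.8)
The plan is to establish the two assertions in turn: first that $z^* \in \flip(\uSet)$, and second that $r_{\uSet}(z^*) = r_{\flip(\uSet)}(z^*) = 0$. The second assertion will be almost immediate once we have the first, since $r_{\uSet}(x) = 0$ if and only if $x \in \uSet$ by the remark following the definition of $r_\uSet$, and the analogous statement holds for $\flip(\uSet)$.

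For the first assertion, suppose $\uSet$ contains a minimizer $z^*$ of~\eqref{eqMainProb}. Since we always assume the preliminary check for $\grad(z^*) = \mathbf{0}$ has been made (see the discussion at the end of Section~1), if $z^*$ were such a vector we would already have detected that $\GP(\uSet)$ is lattice-free and would not be updating $\uSet$; so we may assume $z$ is updated via some case of Table~\ref{tableFlips}. The key point is that $z^*$ minimizes $f$ over all of $\Z^2$, hence in particular over the finite set $\uSet$, so
\[
f(z^*) = \min\{f(w) : w \in \uSet\}.
\]
I now claim $z^* \in \uSet \cap \GP(\uSet)$. Indeed, if $z^* \notin \GP(\uSet)$, then some $w \in \uSet$ strictly cuts $z^*$, and Proposition~\ref{propConvexProperties} gives $f(z^*) > f(w)$, contradicting minimality of $f(z^*)$ over $\uSet$. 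Therefore $z^* \in \uSet \cap \GP(\uSet)$. Now Lemma~\ref{lemma:subset} states precisely that $\uSet \cap \GP(\uSet) \subseteq \flip(\uSet)$, so $z^* \in \flip(\uSet)$.

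Given $z^* \in \uSet$ and $z^* \in \flip(\uSet)$, the observation that $r_\uSet(x) = 0 \iff x \in \uSet$ (which holds for any unimodular set, as noted in the text before~\eqref{eqfNotation2}) yields $r_{\uSet}(z^*) = 0$ and $r_{\flip(\uSet)}(z^*) = 0$, completing the proof. The only mild subtlety — the part that requires a small argument rather than pure citation — is the claim $z^* \in \GP(\uSet)$; everything else is a direct appeal to Lemma~\ref{lemma:subset}, Proposition~\ref{propConvexProperties}, and the definition of $r_\uSet$. I do not anticipate any genuine obstacle here, since both measures of progress are designed so that a set already containing a global minimizer retains it after a flip; the lemma is essentially a sanity check needed to phrase Theorem~\ref{thmMain1}(ii-b) and to drive the finiteness argument in Theorem~\ref{thmMain2}.
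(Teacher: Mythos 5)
Your proof is correct and takes essentially the same approach as the paper: it hinges on showing $z^* \in \uSet \cap \GP(\uSet)$ (you cite Proposition~\ref{propConvexProperties} directly where the paper points to Lemma~\ref{Lemma:Gen_Gradient_Polyhedron}\ref{Prop:partI}, which is itself a consequence of that proposition), then applies Lemma~\ref{lemma:subset} and the characterization $r_\uSet(x)=0 \iff x\in\uSet$.
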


We now proceed to analyze the cases independently.
In each subsection, we let $z^*$ denote an arbitrary optimal solution to~\eqref{eqMainProb}.

\subsection{An analysis of {\bf Case 1}.}

	\begin{lemma}\label{lemPart1ReduceROutcome}
		Suppose $\uSet$ fits into {\bf Case~\hyperlink{case1}{1}}.
	Then
	\begin{enumerate}[leftmargin = .75 cm, label = \textit{(\roman*)}]
	\item $\flip(\uSet) \ler \uSet$.
	\item if $\flip(\uSet) \ef \uSet$ and $\flip(\uSet) \er \uSet$, then $\flip(\uSet)$ is connected.
	\end{enumerate}
	\end{lemma}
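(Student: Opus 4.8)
The plan is to set up coordinates so that $\uSet = \uSet(z,U)$ fits into \textbf{Case~\hyperlink{case1}{1}}, meaning $\uSet \cap \GP(\uSet) = \{z\}$; by preprocessing (Lemma~\ref{eqPreprocess}\ref{PreProcess1}) we have $z \in \GP(\uSet)$ and by Lemma~\ref{Lemma:Gen_Gradient_Polyhedron}\ref{Prop:partV} the signs $\sigma^1,\sigma^2$ are not both $+1$, so the flip genuinely moves $z$ toward a lower sublevel set along at least one of the directions $\pm u^1, \pm u^2$. Recall $\flip(U) = (\sigma^1 u^1, \sigma^2 u^2)$ and $\flip(\uSet)$ is the preprocessing of $\uSet(z,\flip(U))$. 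I would first record the key geometric fact: since $\grad(z)^\intercal(\sigma^i u^i) \le 0$ for each $i$, the vector $z$ cuts none of $z+\sigma^1 u^1$, $z+\sigma^2 u^2$ strictly in the ``wrong'' direction — more precisely, $z + \sigma^i u^i$ does \emph{not} strictly cut $z$ is what we want, but actually we should track $f$-values: by convexity (Proposition~\ref{propConvexProperties}) applied in the gradient inequality at $z$, we get $f(z + \sigma^i u^i) \ge f(z) + \grad(z)^\intercal(\sigma^i u^i)$, which does not immediately give what we need, so the real input is that $z$ does not strictly cut the new neighbors, combined with Lemma~\ref{Lemma:Gen_Gradient_Polyhedron}\ref{Prop:partIV}.

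For part \textit{(i)}, $\flip(\uSet) \ler \uSet$: I would argue that the optimal solution $z^*$ lies, relative to the new orthant decomposition of $\uSet(z,\flip(U))$, no farther from $\flip(\uSet)$ than it was from $\uSet$. The point is that $z^*$ lies in the ``descent'' orthant relative to $z$: since every element of $\uSet \setminus \{z\}$ is strictly cut by some anchor in $\uSet$ (as it is outside $\GP(\uSet)$), and since $z^*$ has the minimum function value, $z^*$ cannot lie in an orthant $O_\uSet(w)$ for $w \ne z$ that forces it ``away.'' More carefully: flipping $u^i$ to $\sigma^i u^i$ reflects the orthant structure at $z$ across the coordinate hyperplanes, and the sign choice $\sigma^i$ is exactly the one that points $z$ toward the half-space $\{x : \grad(z)^\intercal(x - z) \le 0\} \supseteq \{x : f(x) \le f(z)\}$, which contains $z^*$. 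So writing $z^* - z$ in the $U$-basis, the flip does not increase the $\ell_1$-norm of the relevant coordinates — in fact it can only decrease it when $\sigma^i = -1$, because then $z^*$ was on the ``far side'' and the reflected basis brings the nearest lattice anchor closer. Establishing that $r_{\flip(\uSet)}(z^*) \le r_\uSet(z^*)$ cleanly, handling which element of $\flip(\uSet)$ realizes the min after preprocessing, is the technical heart; I expect to split on the four sign patterns (one excluded) for $(\sigma^1,\sigma^2)$ and compute $r$ in each.

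For part \textit{(ii)}, suppose $\flip(\uSet) \ef \uSet$ and $\flip(\uSet) \er \uSet$; I must show $\flip(\uSet)$ is connected, i.e.\ after preprocessing $\flip(\uSet) \cap \GP(\flip(\uSet)) \supseteq \{w, w+u^1\}$ for its anchor $w$ and first generator. The strategy is to show $\flip(\uSet)$ cannot be disconnected, equivalently cannot fall into \textbf{Case~\hyperlink{case1}{1}} or \textbf{Case~\hyperlink{case2}{2}}. If $\flip(\uSet)$ were again in \textbf{Case 1}, then $|\flip(\uSet) \cap \GP(\flip(\uSet))| = 1$; I would derive a contradiction with $\flip(\uSet) \ef \uSet$ and $\flip(\uSet)\er \uSet$ by showing that repeating the reasoning of part \textit{(i)} with strict inequalities forces one of the two measures to drop — the sign pattern $(\sigma^1,\sigma^2) \ne (1,1)$ guarantees a \emph{strict} improvement somewhere unless $z^*$ coincides with $z$ (forcing $\grad(z) = \mathbf 0$, handled by the standing assumption at the end of Section~1) or $z^*$ is already in $\uSet$ (forcing $r_\uSet(z^*) = 0$ and connectedness via Lemma~\ref{lemContainMin} and Lemma~\ref{Lemma:Gen_Gradient_Polyhedron}\ref{Prop:partV}). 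For the \textbf{Case 2} possibility, I would use that $\uSet \cap \GP(\uSet) \subseteq \flip(\uSet)$ (Lemma~\ref{lemma:subset}) together with a case analysis of the relative positions of the new generators to rule out $\flip(\uSet) \cap \GP(\flip(\uSet)) = \{w, w + v^1 + v^2\}$ under the equality hypotheses. The main obstacle throughout is bookkeeping: correctly tracking how preprocessing relabels the anchor and flips generator signs so that statements like ``$\{w, w+u^1\}$'' and the $r_\uSet$ minimizer stay consistent across the update; I would isolate this in a short sub-claim that the preprocessed $\flip(\uSet)$ has anchor equal to the (unique) element of $\uSet \cap \GP(\uSet)$, namely $z$ itself, which follows since $z \in \GP(\uSet(z,\flip(U)))$ (the gradient cuts at $z$ are unchanged) and Lemma~\ref{lemma:subset} keeps $z$ in $\flip(\uSet) \cap \GP(\flip(\uSet))$.
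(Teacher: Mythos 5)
There is a genuine gap in your plan for part \textit{(ii)}, and part \textit{(i)} is only a sketch whose heuristic is slightly off.

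For part \textit{(i)}, the intuition that the sign choice ``points $z$ toward'' the descent half-space is the right starting point, but your claim that the flip ``can only decrease'' the $\ell_1$-distance (when $\sigma^i=-1$) is not what happens. The paper's actual argument is: in the $(\sigma^1,\sigma^2)=(-1,-1)$ outcome, \eqref{EqCase1zcutsall} shows $z$ strictly cuts every lattice vector of the form $z+k_1u^1+k_2u^2$ with $k_1,k_2\ge 0$, $k_1+k_2\ge 1$, so $z^*\notin O_\uSet(z+u^1+u^2)$; the remaining three orthants are then rewritten in the $\flip(U)$-basis, and in two of them (namely $O_\uSet(z+u^1)$, $O_\uSet(z+u^2)$) the value $r$ is exactly preserved, not decreased. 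In the mixed sign outcome $(1,-1)$ the orthant $O_\uSet(z+u^2)$ is excluded outright and $O_\uSet(z+u^1+u^2)$ is excluded by a separate three-part argument splitting on which vertex of $\uSet$ strictly cuts $z+u^1+u^2$. So the orthant casework you defer to is not a routine reflection calculation; you need the exclusion of a specific orthant first, and the conclusion is $\leq$, not a decrease.

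The real problem is part \textit{(ii)}. You propose to derive a contradiction from ``$\flip(\uSet)$ is again in Case~1'' by arguing that $(\sigma^1,\sigma^2)\ne(1,1)$ forces a strict improvement in $f$ or $r$ unless $z^*=z$ or $z^*\in\uSet$. This is false. For example, in the $(-u^1,-u^2)$ outcome with $z^*\in O_\uSet(z+u^2)$ and $z^*\ne z+u^2$, the paper's own computation gives $r_{\flip(\uSet)}(z^*)=r_\uSet(z^*)$ exactly, and the minimum $f$-value over $\flip(\uSet)$ is still attained at $z$ (so it is unchanged, not decreased). Hence both measures can stay constant even when $z^*\notin\uSet$, and there is no contradiction to be found along the route you describe. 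The paper's argument for \textit{(ii)} is of a different kind: it uses the hypothesis $\flip(\uSet)\ef\uSet$ only to conclude $z\in\GP(\flip(\uSet))$ (if not, some flipped vertex strictly cuts $z$, forcing $\flip(\uSet)\lf\uSet$ by Proposition~\ref{propConvexProperties}); then it observes, purely from the sign choice, that $z$ does not strictly cut any vector in $\flip(\uSet)$ (this is \eqref{EqCase1zcutsall} and its mixed-sign analogue); Lemma~\ref{Lemma:Gen_Gradient_Polyhedron}\ref{Prop:partV} then gives $|\flip(\uSet)\cap\GP(\flip(\uSet))|\ge 2$, and if the pair were the diagonal $\{z,\,z-u^1-u^2\}$ (the only disconnected possibility with $z$ present), Lemma~\ref{Lemma:Gen_Gradient_Polyhedron}\ref{Prop:partIII} forces $z-u^1-u^2$ to strictly cut both its neighbours, and then \ref{Prop:partIV} makes it strictly cut $z$, contradicting $z\in\GP(\flip(\uSet))$. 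Note also that the hypothesis $\flip(\uSet)\er\uSet$ is not used in the paper's proof of \textit{(ii)} at all. You would need to replace your improvement-based contradiction with this cut-based argument.
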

	\begin{proof}
	First, we consider the outcome that
	\[
	\flip(U) = (-u^1, -u^2) ~~\text{and}~~\flip(\uSet) = \{z, z-u^1, z-u^2, z-u^1-u^2\}.
	\]	
See Figure~\ref{figCase1}.
	 Here, $z$ strictly cuts $z+u^1$ and $z+u^2$.
	By Lemma~\ref{Lemma:Gen_Gradient_Polyhedron}~\ref{Prop:partIV} we have
	\begin{equation}\label{EqCase1zcutsall}
	z ~\text{strictly cuts}~z+k_1u^1+k_2u^2 \quad \forall ~ k_1, k_2 \ge 0 ~\text{with}~k_1+k_2 \ge 1.
	\end{equation}
	This implies $z^* \not \in O_\uSet(z+u^1+u^2)$.
	Hence, $z^*\in O_\uSet(z) \cup O_\uSet(z+u^1)\cup O_\uSet(z+u^2)$.
	By~\eqref{EqCase1zcutsall}, if $z^* \in O_\uSet(z+u^2)$, then
	\[
	z^* = (z+u^2)+k^1(-u^1)+k_2u^2 = (z-u^1) + (k_1-1)(-u^1)+(k_2+1)u^2,
	\]
	where $k_1 \geq 1$ and $k_2\geq 0$.
	 This last expression implies that $z^* \in O_{\flip(\uSet)}(z-u^1)$ and
	\(
	r_{\flip(\uSet)}(z^*) = k_1+k_2 = r_{\uSet}(z^*).
	\)
	So, $\flip(\uSet) \leq_r \uSet$.
	We derive $\flip(\uSet) \leq_r \uSet$ similarly if $z^* \in O_\uSet(z+u^1) \cup O_\uSet(z)$.

		Suppose further that $\flip(\uSet) \ef \uSet$ and $\flip(\uSet) \er \uSet$.
	We claim that $\flip(\uSet)$ is connected.
	Here, we have $z \in \GP(\flip(\uSet))$, otherwise some vector in $\flip(\uSet)$ strictly cuts $z$ and $\flip(\uSet) \lf \uSet$ by Proposition~\ref{propConvexProperties}.
	Furthermore, $z$ does not strictly cut any vector in $\flip(\uSet)$ by~\eqref{EqCase1zcutsall}.
	Assume to the contrary that $\flip(\uSet)$ is disconnected.
	Then, $\flip(\uSet) \cap \GP(\uSet) = \{z, z-u^1-u^2\}$ by Lemma~\ref{Lemma:Gen_Gradient_Polyhedron} \ref{Prop:partV}.
		Lemma~\ref{Lemma:Gen_Gradient_Polyhedron} \ref{Prop:partIII} states $z-u^1-u^2$ strictly cuts $z-u^1$ and $z-u^1$.
 	Thus, $z-u^1-u^2$ strictly cuts $z$ by Lemma~\ref{Lemma:Gen_Gradient_Polyhedron}  \ref{Prop:partIV}, which is a contradiction.

	The other outcomes in {\bf Case~\hyperlink{case1}{1}} are $\flip(U) = (-u^1, u^2)$ and $\flip(U) = (u^1, -u^2)$.
	These settings are symmetric, so suppose
	\[
	\flip(U) = (u^1, -u^2) ~~\text{and}~~\flip(\uSet) = \{z, z+u^1, z-u^2, z+u^1-u^2\}.
	\]	
	Here, $z$ strictly cuts $z-u^1$ and $z+u^2$. Thus, by Lemma~\ref{Lemma:Gen_Gradient_Polyhedron}~\ref{Prop:partIV} $z$ strictly cuts every vector in $O_{\uSet}(z+u^2)$ and $z^* \not \in O_{\uSet}(z+u^2)$.
	Since $z+u^1+u^2 \not \in \GP(\uSet)$, $z+u^1+u^2$ is strictly cut by at least one vector in $\uSet \setminus \{z+u^1+u^2\}$.
	If $z+u^1+u^2$ is strictly cut by $z+u^1$ or $z+u^2$, then Lemma~\ref{Lemma:Gen_Gradient_Polyhedron}~\ref{Prop:partIV} implies that every vector in $O_{\uSet}(z+u^1+u^2)$ is strictly cut and hence $z^* \not \in O_{\uSet}(z+u^1+u^2)$.
	Suppose that neither $z+u^1$ nor $z+u^2$ strictly cut $z+u^1+u^2$.
	This implies that $z+u^2$ does not strictly cut $z+u^1$.
	Since $z$ does not strictly cut $z+u^1$ by assumption, $z+u^1$ is strictly cut by $z+u^1+u^2$.
	Because  $z+u^1+u^2$ does not strictly cut $z$, Lemma~\ref{Lemma:Gen_Gradient_Polyhedron}~\ref{Prop:partIV} yields that $z+u^1+u^2$ strictly cuts every vector of the form $(z+u^1+u^1) + (-k_1)u^1+(-k_2)u^2$ with $k_1 \geq k_2 \geq 0$.
	Similarly, since $z$ strictly cuts $z+u^1+u^2$ but does not strictly cut $z+u^1$, Lemma~\ref{Lemma:Gen_Gradient_Polyhedron}~\ref{Prop:partIV} yields that $z$ strictly cuts every vector of the form $(z+u^1+u^1) + (-k_1)u^1+(-k_2)u^2$ with $0 \leq k_1 \leq k_2$.
	Overall every vector in $O_{\uSet}(z+u^1+u^2)$ is strictly cut and thus $z^* \not \in O_{\uSet}(z+u^1+u^2)$.

	Hence, $z^* \in O_\uSet(z) \cup O_\uSet(z+u^1)$.
	If $z^* \in O_\uSet(z)$, then
	\[
	z^* = z +k_1 (-u^1) +k_2(-u^2),
	\]
	where $k_1, k_2 \ge 0$.
	If $k_2 = 0$, then $z^* \in O_{\flip(\uSet)}(z)$ and $r_{\flip(\uSet)}(z^*)  = k_1 = r_{\uSet}(z^*)$.
	If $k_2 \ge 1$, then $z^* \in O_{\flip(\uSet)}(z-u^2)$ and
	\[
	z^* = (z-u^2) +k_1 (-u^1) +(k_2-1)(-u^2),
	\]
	which shows that $r_{\uSet}(z^*)  = k_1 +k_2 > k_1+(k_2-1) = r_{\flip(\uSet)}(z^*)$.

	Symmetrically, if $z^* \in O_\uSet(z+u^1)$, then $r_{\flip(\uSet)}(z^*)  \ler r_{\uSet}(z^*)$.
	Furthermore, using a proof similar to above, we see that if $\flip(\uSet) \ef \uSet$ and $\flip(\uSet) \er \uSet$, then $\flip(\uSet)$ is connected.
	\qed
	\end{proof}

		\begin{figure}
	\hspace*{-.25 cm}
	\begin{tabular}{c @{\hskip 1 cm}c}
	\begin{tikzpicture}[scale = .5]

	\draw[red, opacity = .3, thick](0,5)--(5,0);
	\draw[red, opacity = .3, thick](1,5)--(6,0);

	\node[] at (-2.5,1) {$O_\uSet(z)$};
	\draw[draw = none, fill = black!30, opacity = .5](-1,0)--(2,0)--(2,2)--(-1,2)--cycle;

	\draw[draw = none, fill = black!30, opacity = .5](3,0)--(3,2)--(6,2)--(6,0)--cycle;
	\node[black] at (5,-0.5){$O_\uSet(z+u^1)$};

	\draw[draw = none, fill = black!30, opacity = .5](-1,3)--(2,3)--(2,5)--(-1,5)--cycle;
	\node[black] at (-3,4){$O_\uSet(z+u^2)$};

	\node[] at (4,5.5){$O_{\uSet}(z+u^1+u^2)$};
	\draw[draw = none, fill = black!30, opacity = .5](3,3)--(3,5)--(6,5)--(6,3)--cycle;

	\draw[dashed, thick, color = black](2,2)--(3,2)--(3,3)--(2,3)--cycle;


	\foreach \i in {-1,...,6}{
	\foreach \j in {0,...,5}
	\draw[fill = black!50, draw = black!50](\i,\j) circle (.5 ex);
	}

	 \draw[ draw = none, fill = red!30, opacity = .3](0,5)--(-1,5)--(-1,0)--(3.33333,0)--cycle;
	\draw[red, thick](0,5)--(3.333333,0);

	\draw[fill = black, draw = black](2,2) circle (.75 ex) node[black,  below left = -1pt]{$z$};
	\draw[fill = black, draw = black](3,2) circle (.75 ex) node[black, below right = -4pt]{$z+u^1$};
	\draw[fill = black, draw = black](2,3) circle (.75 ex) node[black, above left = -3pt]{$z+u^2$};
	\draw[fill = black, draw = black](3,3) circle (.75 ex);

	
	\end{tikzpicture}
	&
		\begin{tikzpicture}[scale = .5]

	\draw[draw = none, fill = black!30, opacity = .5](-1,0)--(2,0)--(2,2)--(-1,2)--cycle;
	\node[black] at (-.5,-0.5){$O_{\flip(\uSet)}(z-u^1-u^2)$};

	\draw[draw = none, fill = black!30, opacity = .5](3,0)--(3,2)--(6,2)--(6,0)--cycle;

	\draw[draw = none, fill = black!30, opacity = .5](-1,3)--(2,3)--(2,5)--(-1,5)--cycle;

	\node[] at (4.5,5.5){$O_{\flip(\uSet)}(z)$};
	\draw[draw = none, fill = black!30, opacity = .5](3,3)--(3,5)--(6,5)--(6,3)--cycle;

	\draw[dashed, thick, color = black](2,2)--(3,2)--(3,3)--(2,3)--cycle;


	\foreach \i in {-1,...,6}{
	\foreach \j in {0,...,5}
	\draw[fill = black!50, draw = black!50](\i,\j) circle (.5 ex);
	}

	\draw[fill = black, draw = black](2,2) circle (.75 ex);
	\draw[fill = black, draw = black](3,2) circle (.75 ex) node[black, below right = -4pt]{$z-u^2$};
	\draw[fill = black, draw = black](2,3) circle (.75 ex) node[black, above left = -3pt]{$z-u^1$};
	\draw[fill = black, draw = black](3,3) circle (.75 ex) node[black, above right = -1pt]{$z$};
	
	\end{tikzpicture}
	\\
	\hspace{1.5 cm}\emph{(i)} & \emph{(ii)} \hspace{1.5 cm}
	\end{tabular}
	\caption{\emph{(i)} \textbf{Case \protect \hyperlink{case1}{1}} when $\flip(U) = (-u^1,-u^2)$.
	$\GP(\uSet)$ is also drawn.
	\emph{(ii)} A sample update $\flip(U) = (-u^1, -u^2)$.
	}\label{figCase1}
	\end{figure}

A defining property of {\bf Case~\hyperlink{case1}{1}} is that $z \in \uSet \cap \flip(\uSet)$.
From this, we make the following observations for later use.

\begin{lemma}\label{lemCase1Extras}
If $\uSet$ fits into {\bf Case~\hyperlink{case1}{1}}, then $\min\{f(v) : v\in \uSet\} = f(z)$.
Also, either $\flip(\uSet)$ is connected or $z$ is strictly cut by a vector in $\flip(\uSet)$.
\end{lemma}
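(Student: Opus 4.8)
\textbf{Proof proposal for Lemma~\ref{lemCase1Extras}.}

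The plan is to split the statement into its two assertions and handle each using the structure of {\bf Case~\hyperlink{case1}{1}}. For the first assertion, recall that in {\bf Case~\hyperlink{case1}{1}} we have $\uSet \cap \GP(\uSet) = \{z\}$, so every vector $v \in \uSet \setminus \{z\}$ lies outside $\GP(\uSet)$; in particular $v \notin \intr(\GP(\uSet))$ (indeed $v \notin \GP(\uSet)$), so there is some $\overline{z} \in \uSet$ that strictly cuts $v$. Actually, the cleanest route is to invoke Lemma~\ref{Lemma:Gen_Gradient_Polyhedron}\ref{Prop:partI} with $\mathcal{X} = \uSet$: since each $v \in \uSet \setminus \{z\}$ satisfies $v \notin \intr(\GP(\uSet))$, we get $f(v) \ge \min\{f(w) : w \in \uSet\}$, and since this holds for $v = z$ trivially as well (as $z \in \uSet$), the minimum is attained at $z$, i.e.\ $\min\{f(v) : v \in \uSet\} = f(z)$. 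This is essentially immediate from the already-established machinery.

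For the second assertion, I would argue by contradiction: assume $\flip(\uSet)$ is disconnected \emph{and} no vector in $\flip(\uSet)$ strictly cuts $z$. The key point is that $z \in \uSet \cap \flip(\uSet)$ (this is the "defining property" highlighted just before the lemma, since in every outcome of {\bf Case~\hyperlink{case1}{1}} the matrix $\flip(U) = (\sigma^1 u^1, \sigma^2 u^2)$ keeps $z$ as a vector of the new unimodular set). Since no vector of $\flip(\uSet)$ strictly cuts $z$, Proposition~\ref{propConvexProperties} gives $f(z) \le f(w)$ for all $w \in \flip(\uSet)$... but more usefully, I claim $z \in \GP(\flip(\uSet))$: if some $w \in \flip(\uSet)$ cut $z$ it would have to be non-strict by assumption, and actually the cleanest claim is that $z$ cannot be strictly cut, which combined with $z \in \flip(\uSet)$ and Lemma~\ref{Lemma:Gen_Gradient_Polyhedron}\ref{Prop:partV} forces $|\flip(\uSet) \cap \GP(\flip(\uSet))| \ge 2$. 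I would then show $z$ does not strictly cut any vector of $\flip(\uSet)$ either — this uses that $\flip(U)$ is chosen precisely so $\grad(z)^\intercal \overline{u}^i \le 0$ for $i \in \{1,2\}$ (by the definition of $\sigma^i$), hence $z$ cuts $z + \overline{u}^1$ and $z + \overline{u}^2$, and then Lemma~\ref{Lemma:Gen_Gradient_Polyhedron}\ref{Prop:partIV} handles $z + \overline{u}^1 + \overline{u}^2$. With $z \in \GP(\flip(\uSet))$ and $z$ strictly cutting nothing, Lemma~\ref{Lemma:Gen_Gradient_Polyhedron}\ref{Prop:partV} gives $|\flip(\uSet) \cap \GP(\flip(\uSet))| \ge 2$, so $\flip(\uSet)$ is disconnected only if $\flip(\uSet) \cap \GP(\flip(\uSet)) = \{z, z + \overline{u}^1 + \overline{u}^2\}$; but then Lemma~\ref{Lemma:Gen_Gradient_Polyhedron}\ref{Prop:partIII} says $z + \overline{u}^1$ and $z+\overline{u}^2$ strictly cut nothing, and in particular by preprocessing (Lemma~\ref{eqPreprocess}\ref{PreProcess2b}) $z$ should strictly cut $z + \overline{u}^1$ — contradicting that $z$ cuts nothing strictly. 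This yields the contradiction and closes the case.

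The main obstacle I anticipate is bookkeeping the relabeling: after choosing $\flip(U) = (\sigma^1 u^1, \sigma^2 u^2)$ we still run preprocessing on $(z, \flip(U))$, which may permute and sign-flip the columns, so I must be careful that the statement "$z$ is strictly cut by a vector in $\flip(\uSet)$" is invariant under that relabeling (it is, since it only refers to the set $\flip(\uSet)$ and not to a particular column labeling) and that the appeal to Lemma~\ref{Lemma:Gen_Gradient_Polyhedron}\ref{Prop:partIV} is done with the correct pair. A secondary subtlety is that I should note $\sigma^1, \sigma^2$ cannot both be $1$ (by Lemma~\ref{Lemma:Gen_Gradient_Polyhedron}\ref{Prop:partV}, as remarked after Table~\ref{tableFlips}), though this is not strictly needed for the argument above — it is the reason {\bf Case~\hyperlink{case1}{1}} genuinely changes $\uSet$. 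Everything else is a direct application of Proposition~\ref{propConvexProperties} and the parts of Lemma~\ref{Lemma:Gen_Gradient_Polyhedron}, so no hard computation is involved.
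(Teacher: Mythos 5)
Your proof has a genuine gap in the first assertion and two local slips in the second; the overall shape of the second part is right and close to the paper's.

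For the first assertion, the move from Lemma~\ref{Lemma:Gen_Gradient_Polyhedron}\ref{Prop:partI} does not deliver the conclusion. That lemma gives $f(v) \ge \min\{f(w):w\in\uSet\}$ for each $v \in \uSet\setminus\{z\}$, which is a tautology (it holds for every element of $\uSet$), and so says nothing about where the minimum is attained. What is needed is a strict separation, and you already had it in hand before you pivoted away: since $v \notin \GP(\uSet)$, some $\overline{z} \in \uSet$ \emph{strictly} cuts $v$, so Proposition~\ref{propConvexProperties} gives $f(v) > f(\overline{z}) \ge \min\{f(w):w\in\uSet\}$, and hence $v$ cannot be the minimizer for any $v \neq z$. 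That is precisely the paper's one-line argument; you should not have abandoned it.

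For the second assertion the architecture matches the paper's (show $z \in \GP(\flip(\uSet))$, show $z$ strictly cuts nothing, invoke Lemma~\ref{Lemma:Gen_Gradient_Polyhedron}\ref{Prop:partV} to get two points of $\flip(\uSet)$ in $\GP(\flip(\uSet))$, assume disconnectedness forces the diagonal pair, derive a contradiction), but two steps are stated incorrectly. First, $\grad(z)^\intercal\overline{u}^i\le 0$ means $z$ \emph{does not} strictly cut $z+\overline{u}^i$, not that $z$ ``cuts'' it --- you have the sign reversed. Second, Lemma~\ref{Lemma:Gen_Gradient_Polyhedron}\ref{Prop:partIV} is the wrong tool for $z+\overline{u}^1+\overline{u}^2$: that lemma concludes $z$ \emph{strictly cuts} $z+\overline{u}^1+\overline{u}^2$ when $z$ (weakly) cuts both neighbours, which is the opposite of what you want. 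The correct and elementary observation is that $\grad(z)^\intercal(\overline{u}^1+\overline{u}^2)\le 0$ by linearity, so $z$ strictly cuts nothing in $\flip(\uSet)$. Finally, your closing appeal to Lemma~\ref{eqPreprocess}\ref{PreProcess2b} to conclude that ``$z$ strictly cuts $z+\overline{u}^1$'' presumes $z$ is the anchor after re-preprocessing $\flip(\uSet)$; it may instead be $z+\overline{u}^1+\overline{u}^2$. The contradiction still holds (both vectors on the diagonal strictly cut one of the other two under \ref{PreProcess2b}), but you should say so. The paper's proof avoids this by not invoking the preprocessing at all: it observes via \ref{Prop:partIII} that the side vectors strictly cut nothing, so $z+\sigma^1u^1+\sigma^2u^2$ must strictly cut both of them, and then \ref{Prop:partIV} forces $z+\sigma^1u^1+\sigma^2u^2$ to strictly cut $z$, the contradiction.
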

\begin{proof}
The fact that $z$ minimizes $f$ over $\uSet$ follows from Proposition~\ref{propConvexProperties} because each vector in $\uSet \setminus\{z\}$ is strictly cut by some vector in $\uSet$.
If $z$ is not strictly cut by a vector in $\flip(\uSet)$, then $z \in \GP(\flip(\uSet))$ and $| \flip(\uSet) \cap \GP(\flip(\uSet)) | \ge 2$ by Lemma~\ref{Lemma:Gen_Gradient_Polyhedron} \ref{Prop:partV}.
Assume to the contrary that $\flip(\uSet)$ is disconnected.
Thus, $\flip(\uSet) = \{z,z+ \sigma^1u^1+\sigma^2u^2\}$, where $\sigma^1, \sigma^2$ are defined in Table \ref{tableFlips}.
The vector $z+ \sigma^1u^1+\sigma^2u^2$ must strictly cut $z+\sigma^1u^1, z+\sigma^2u^2 \in \flip(\uSet)$.
Then Lemma~\ref{Lemma:Gen_Gradient_Polyhedron} \ref{Prop:partIV} implies that $z+ \sigma^1u^1+\sigma^2u^2$ strictly cuts $z$, which is a contradiction.
\qed
\end{proof}

\subsection{An analysis of {\bf Case 2}.}
The following sets are helpful to analyze \textbf{Case~\protect\hyperlink{case2}{2}}:
	\[
	\begin{array}{rcl}
	O_\uSet(z+u^1+u^2)_B &:=& \{(z+u^1+u^2) + k_1(u^1+u^2) + k_2u^2: k_1,k_2 \in \R_{\ge 0}\},\\[.05 cm]
	O_\uSet(z+u^1+u^2)_A &:=& O_\uSet(z+u^1+u^2)\setminus O_\uSet(z+u^1+u^2)_B,\\[.05 cm]
	O_\uSet(z)_B &:=& \{z+k_1(u^1+u^2)+k_2u^2: k_1,k_2 \in \R_{ \le  0}\}, \text{ and}\\[.05 cm]
	O_\uSet(z)_A &:=& O_\uSet(z) \setminus O_\uSet(z)_B.
	\end{array}
	\]
	Figure~\ref{figCase2} \emph{(i)} illustrates these sets.

	\begin{figure}
	\hspace*{-.35 cm}
	\begin{tabular}{c@{\hskip .375 cm} c}
	\begin{tikzpicture}[scale = .5]

	\node[] at (-2.25,1){$O_\uSet(z)_A$};
	\node[] at (1,-0.5){$O_\uSet(z)_B$};
	\draw[draw = none, fill = black!30, opacity = .5](1,2)--(-1,2)--(-1,0)--cycle;
	\draw[draw = none, fill = black!30, opacity = .5](0,0)--(2,2)--(2,0)--cycle;

	\draw[draw = none, fill = black!30, opacity = .5](3,0)--(3,2)--(6,2)--(6,0)--cycle;
	\node[black] at (5,-0.5){$O_\uSet(z+u^1)$};

	\draw[draw = none, fill = black!30, opacity = .5](-1,3)--(2,3)--(2,5)--(-1,5)--cycle;
	\node[black] at (-1,5.5){$O_\uSet(z+u^2)$};

	\node[] at (8.8,4){$O_\uSet(z+u^1+u^2)_A$};
	\node[] at (4.5,5.5){$O_\uSet(z+u^1+u^2)_B$};
	\draw[draw = none, fill = black!30, opacity = .5](4,3)--(6,3)--(6,5)--cycle;
	\draw[draw = none, fill = black!30, opacity = .5](3,3)--(5,5)--(3,5)--cycle;

	\draw[dashed, thick, color = black](2,2)--(3,2)--(3,3)--(2,3)--cycle;


	\foreach \i in {-1,...,6}{
	\foreach \j in {0,...,5}
	\draw[fill = black!50, draw = black!50](\i,\j) circle (.5 ex);
	}

	\draw[fill = black, draw = black](2,2) circle (.75 ex) node[black,  below left = -1pt]{$z$};
	\draw[fill = black, draw = black](3,2) circle (.75 ex) node[black, below right = -4pt]{$z+u^1$};
	\draw[fill = black, draw = black](2,3) circle (.75 ex) node[black, above left = -3pt]{$z+u^2$};
	\draw[fill = black, draw = black](3,3) circle (.75 ex);

	\end{tikzpicture}
	&
	\begin{tikzpicture}[scale = .5]

		\foreach \i in {-1,...,6}{
	\foreach \j in {0,...,5}
	\draw[fill = black!50, draw = black!50](\i,\j) circle (.5 ex);
	}

		\node[black] at (3,-0.5){$O_{\flip(\uSet)}(z)$};
	\clip (-1.1,-.1) rectangle (6.1,5.1);
	
	\draw[draw = none, fill = black!30, opacity = .5](2,2)--(6,2)--(6,0)--(-2,0)--cycle;
	
	\draw[draw = none, fill = black!30, opacity = .5](1,2)--(-2,2)--(-1,1)--cycle;
	
	\draw[draw = none, fill = black!30, opacity = .5](4,3)--(6,3)--(6,4)--cycle;
	
	\draw[draw = none, fill = black!30, opacity = .5](3,3)--(-1,3)--(-1,5)--(7,5)--cycle;

	\draw[fill = black, draw = black](2,2) circle (.75 ex) node[black,  below right = -1pt]{$z$};
	\draw[fill = black, draw = black](1,2) circle (.75 ex) node[black,   above left = -3pt]{$z-u^1$};
	\draw[fill = black, draw = black](3,3) circle (.75 ex) node[black,  above left = -2pt]{$z+u^1+u^2$};
	
		\draw[fill = black, draw = black](4,3) circle (.75 ex);

	\draw[dashed, thick, draw=black](2,2)--(1,2)--(3,3)--(4,3)--cycle;

	\end{tikzpicture}
	\\
	\emph{(i)} \hspace*{1.25 cm} & \hspace*{1.25 cm} \emph{(ii)}
	\end{tabular}
	\caption{\emph{(i)} Sets used to analyze \textbf{Case~\protect\hyperlink{case2}{2}}.
	%
	%
	\emph{(ii)} A sample update for the third outcome of \textbf{Case~\protect\hyperlink{case2}{2}}, $\flip(U) = (-u^1, 2u^1+u^2)$.
	}\label{figCase2}
	\end{figure}

		\begin{lemma}\label{lemPart2ReduceROutcome}
		Suppose $\uSet$ fits into {\bf Case~\hyperlink{case2}{2}}.
	Then
	\begin{enumerate}[leftmargin = .75 cm, label = \textit{(\roman*)}]
	\item\label{lemCase2i} $\flip(\uSet) \ler \uSet$.
	\item\label{lemCase2ii} if $\flip(U) = (-u^1, 2u^1+u^2)$ and $\uSet$ does not contain a minimizer of~\eqref{eqMainProb}, then $\flip(\uSet) \lr \uSet$.
	\item\label{lemCase2iii} if $\flip(\uSet) \ef \uSet$ and $\flip(\uSet) \er \uSet$, then $\flip(\uSet)$ is connected.
	\end{enumerate}
	\end{lemma}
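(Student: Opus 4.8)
The plan is to prove \emph{(i)}, \emph{(ii)}, \emph{(iii)} together by running through the three possible values of $\flip(U)$ in {\bf Case~\protect\hyperlink{case2}{2}}, namely $\flip(U) = U'$, $\flip(U) = U''$, and $\flip(U) = (-u^1, 2u^1+u^2)$. Before splitting, I would record facts that hold in all three cases. Since $\uSet$ is preprocessed as in Lemma~\ref{eqPreprocess}\ref{PreProcess2b}, $z$ strictly cuts $z+u^1$ and $z+u^1+u^2$ strictly cuts $z+u^2$; combining this with $z, z+u^1+u^2 \in \GP(\uSet)$ yields the sign data $\grad(z)^\intercal u^1 > 0 > \grad(z)^\intercal u^2$ and $\grad(z+u^1+u^2)^\intercal u^1 < 0 < \grad(z+u^1+u^2)^\intercal u^2$. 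A direct computation with these signs (equivalently, iterating Lemma~\ref{Lemma:Gen_Gradient_Polyhedron}\ref{Prop:partIV}) shows that $z$ strictly cuts every integer point of $O_\uSet(z+u^1)$ and $z+u^1+u^2$ strictly cuts every integer point of $O_\uSet(z+u^2)$. Since $z^*$ is optimal for~\eqref{eqMainProb}, Proposition~\ref{propConvexProperties} forbids any integer vector from strictly cutting $z^*$, so $z^* \in O_\uSet(z) \cup O_\uSet(z+u^1+u^2)$. Finally, $\uSet \cap \GP(\uSet) = \{z, z+u^1+u^2\} \subseteq \flip(\uSet)$ by Lemma~\ref{lemma:subset}, so the only two corners that can contain $z^*$ survive the flip; and because $r_{\flip(\uSet)}(\cdot)$ is invariant under the preprocessing relabelling, it may be computed directly from the pair $(z, \flip(U))$.

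For \emph{(i)} I would treat each of the three outcomes separately. In the $U'$ and $U''$ outcomes --- each triggered by a disjunction of three conditions --- I would handle the three triggering disjuncts one at a time; in every instance the triggering condition (connectedness of $\uSet'$ or $\uSet''$, a prescribed one-point intersection, or a prescribed pair of strict cuts involving $z-u^1$, $z+2u^1+u^2$ and the two diagonal directions) provides exactly the refinement of ``$z^* \in O_\uSet(z) \cup O_\uSet(z+u^1+u^2)$'' that is needed. This is where the sets $O_\uSet(z)_A,\, O_\uSet(z)_B,\, O_\uSet(z+u^1+u^2)_A,\, O_\uSet(z+u^1+u^2)_B$ enter: they are precisely the pieces into which a diagonal cut splits those two orthants. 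Once $z^*$ is confined to such a piece, I would write $z^* - w$ in the basis $\{u^1,u^2\}$ for the relevant corner $w \in \{z, z+u^1+u^2\}$, re-expand it in the columns of $\flip(U)$, read off the unique corner $\overline w$ of $\flip(\uSet)$ with $z^* \in O_{\flip(\uSet)}(\overline w)$, and verify $\|\flip(U)^{-1}(z^*-\overline w)\|_1 \le \|U^{-1}(z^*-w)\|_1$; taking the minimum over optimal $z^*$ gives $\flip(\uSet) \ler \uSet$. For \emph{(ii)} I would carry out the same bookkeeping in the $(-u^1, 2u^1+u^2)$ outcome, now using the \emph{negations} of all of the $U'$- and $U''$-branch conditions simultaneously; these negations localize $z^*$ tightly enough that the $\ell_1$ estimate becomes strict, i.e.\ $r_{\flip(\uSet)}(z^*) < r_\uSet(z^*)$, whenever $r_\uSet(z^*) \ge 1$ --- that is, whenever $\uSet$ contains no minimizer of~\eqref{eqMainProb} --- and passing to the minimum gives $\flip(\uSet) \lr \uSet$.

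For \emph{(iii)} I would argue by contradiction, in parallel with the {\bf Case~1} treatment in Lemma~\ref{lemPart1ReduceROutcome}\emph{(ii)} and Lemma~\ref{lemCase1Extras}. If $\flip(\uSet)$ is disconnected then, being preprocessed, Lemma~\ref{eqPreprocess}\ref{PreProcess1}--\ref{PreProcess2} forces $\flip(\uSet) \cap \GP(\flip(\uSet))$ to be a diagonal pair $\{a, a+\overline u^1+\overline u^2\}$ with $a$ strictly cutting $a+\overline u^1$ and $a+\overline u^1+\overline u^2$ strictly cutting $a+\overline u^2$ (the singleton case being excluded exactly as in Lemma~\ref{lemCase1Extras}). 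On the other hand, $\flip(\uSet) \ef \uSet$ together with Lemma~\ref{lemma:subset} forces the $f$-minimal vector of $\flip(\uSet)$ to lie in $\{z, z+u^1+u^2\}$ and hence not to be strictly cut; meanwhile Lemma~\ref{Lemma:Gen_Gradient_Polyhedron}\ref{Prop:partIII}--\ref{Prop:partIV} applied to the diagonal pair forces it to be strictly cut, and $\flip(\uSet) \er \uSet$ together with the location of $z^*$ rules out the one remaining configuration --- a contradiction. The main obstacle is the $(-u^1, 2u^1+u^2)$ outcome: it carries the least structural information, so the strict inequality in \emph{(ii)} must be extracted solely from the simultaneous failure of all six branch conditions, which calls for a fairly delicate enumeration of the positions of $z^*$ relative to $z-u^1$, $z+2u^1+u^2$ and the two diagonals. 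The $U'$ and $U''$ outcomes are comparatively routine once the sign data and the localization $z^* \in O_\uSet(z) \cup O_\uSet(z+u^1+u^2)$ are in hand.
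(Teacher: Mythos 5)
Your localization of $z^*$ is the right first step and matches the paper: the preprocessing gives strict cuts that rule out $O_\uSet(z+u^1)$, $O_\uSet(z+u^2)$, $O_\uSet(z)_B$ and $O_\uSet(z+u^1+u^2)_B$, confining $z^*$ to $O_\uSet(z)_A \cup O_\uSet(z+u^1+u^2)_A$. Part \emph{(i)} then works as you say, though you have built in unnecessary complication: once $z^*$ is confined to these wedges, the $\ell_1$ comparison $r_\uSet(z^*)=k_1+k_2 \ge (k_1-k_2)+k_2 = r_{\flip(\uSet)}(z^*)$ is \emph{uniform} across all three disjuncts that trigger $\flip(U)=U'$ (resp.\ $U''$); there is no need to ``handle the three triggering disjuncts one at a time.''

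Your plan for \emph{(ii)} rests on a misdiagnosis. You write that the strict inequality ``must be extracted solely from the simultaneous failure of all six branch conditions'' and anticipate a ``delicate enumeration.'' It isn't needed: in the $(-u^1,2u^1+u^2)$ outcome, the inclusion $z^*\in O_\uSet(z)_A$ \emph{already} forces $k_1>k_2$ (since $O_\uSet(z)_A$ is precisely the part of $O_\uSet(z)$ off the diagonal), which rewrites as $k_2>-k_1+2k_2$, and hence $r_\uSet(z^*)=k_1+k_2 > (-k_1+2k_2)+k_2 = r_{\flip(\uSet)}(z^*)$. The strict inequality comes only from preprocessing and arithmetic, not from the negated branch conditions.

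Your sketch of \emph{(iii)} has a genuine gap. First, the step ``Lemma~\ref{Lemma:Gen_Gradient_Polyhedron}\ref{Prop:partIII}--\ref{Prop:partIV} applied to the diagonal pair forces [the $f$-minimal vector] to be strictly cut'' cannot be right: the $f$-minimal vector of $\flip(\uSet)$ lies in $\GP(\flip(\uSet))$ by Lemma~\ref{Lemma:Gen_Gradient_Polyhedron}\ref{Prop:partI}, so by definition it is not strictly cut, and parts \ref{Prop:partIII}--\ref{Prop:partIV} give nothing of that kind. Second, and more importantly, you are missing the crucial case split. For $\flip(\uSet)=\uSet'$ (resp.\ $\uSet''$) connectivity is forced by a careful look at which of the three disjuncts triggered the branch — the disjunct $\GP(\uSet')\cap\uSet'=\{z+2u^1+u^2\}$ contradicts $\ef$, and the strict-cut disjunct is argued away via Lemma~\ref{Lemma:Gen_Gradient_Polyhedron}\ref{Prop:partV}. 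But the $(-u^1,2u^1+u^2)$ outcome is handled quite differently: one must show it \emph{cannot occur} under $\ef$ and $\er$, by invoking \emph{(ii)} to conclude that $\uSet$ contains a minimizer, and then showing that one of $\uSet'$ or $\uSet''$ would have been connected (so a higher-priority branch would have fired). Your ``$\flip(\uSet)\er\uSet$ together with the location of $z^*$ rules out the one remaining configuration'' is a placeholder for exactly this argument, but without the appeal to \emph{(ii)} and the would-have-triggered structure, the contradiction does not close.
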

	\begin{proof}
		First, we prove $\flip(\uSet) \leq_r \uSet$.
	By Lemma~\ref{lemContainMin} we assume that $ \uSet$ does not contain a minimizer $z^*$ of~\eqref{eqMainProb}.
	By Lemma~\ref{Lemma:Gen_Gradient_Polyhedron} \ref{Prop:partI}, $z^* \in \intr(\GP(\uSet))$.
	If $z^* \in O_\uSet(z+u^2)$, then $z+u^2 \in \conv\{z^*, z+u^1+u^2, z\}$ giving the contradiction $z+u^2\in \intr(\GP(\uSet))$.
	By symmetry, $z^* \not\in O_\uSet(z+u^1)$.
		 The preprocessing in Lemma~\ref{eqPreprocess} states that $z+u^1+u^2$ strictly cuts $z+u^2$ and does not cut $z+u^1$.
		 Thus, $z+u^1+u^2$ strictly cuts every vector in $O_\uSet(z+u^1+u^2)_B $ by Lemma~\ref{Lemma:Gen_Gradient_Polyhedron} \ref{Prop:partIV}.
		So, $z^* \not \in O_\uSet(z+u^1+u^2)_B$.
	Similarly, $z^* \not\in O_\uSet(z)_B$.
	Therefore, $z^* \in O_\uSet(z)_A \cup O_\uSet(z+u^1+u^2)_A$.
	As implied by Figure~\ref{figCase2} \emph{(i)}, the sets $O_\uSet(z)_A$ and $O_\uSet(z+u^1+u^2)_A$ are symmetric around $\uSet$.
	Hence, we simplify the proof of $\flip(\uSet) \ler \uSet$ by assuming without loss of generality that $z^* \in  O_\uSet(z)_A$.

	\textbf{{Case \hyperlink{case2}{2}}} has three outcomes.
	The first two update $U$ to one of the matrices $U'$ and $U''$ from Table \ref{tableFlips}.
	The proofs of correctness for each of these outcomes are similar.
	We prove $\flip(\uSet) \leq_r \uSet$ when
	\[
	\begin{array}{rl}
	& \flip(U) = U' = (u^1, u^1+u^2) \\[.1 cm]
	\text{and}& \flip(\uSet) = \uSet' = \{z, z+u^1, z+u^1+u^2, z+2u^1+u^2\}.
	\end{array}
	\]
See Figure~\ref{figCase22}.
	\begin{figure}
	\hspace*{-.25 cm}
	\begin{tabular}{c @{\hskip 1.75 cm}c}
	\begin{tikzpicture}[scale = .5]

	\draw[red, opacity = .3, thick] (0,0)--(6,4);
	\draw[red, opacity = .3, thick](-1,2)--(6,4.33333);

	\node[] at (-2.5,1) {$O_\uSet(z)$};
	\draw[draw = none, fill = black!30, opacity = .5](-1,0)--(2,0)--(2,2)--(-1,2)--cycle;

	\draw[draw = none, fill = black!30, opacity = .5](3,0)--(3,2)--(6,2)--(6,0)--cycle;
	\node[black] at (5,-0.5){$O_\uSet(z+u^1)$};

	\draw[draw = none, fill = black!30, opacity = .5](-1,3)--(2,3)--(2,5)--(-1,5)--cycle;
	\node[black] at (-3,4){$O_\uSet(z+u^2)$};

	\node[] at (5,5.5){$O_{\uSet}(z+u^1+u^2)$};
	\draw[draw = none, fill = black!30, opacity = .5](3,3)--(3,5)--(6,5)--(6,3)--cycle;

	\draw[dashed, thick, color = black](2,2)--(3,2)--(3,3)--(2,3)--cycle;


	\foreach \i in {-1,...,6}{
	\foreach \j in {0,...,5}
	\draw[fill = black!50, draw = black!50](\i,\j) circle (.5 ex);
	}

	\draw[ draw = none, fill = red!30, opacity = .3](-1,2)--(-1,0.5)--(5,3.5)--cycle;
	\draw[red, thick](-1,0.5)--(6,4);
	\draw[red, thick](-1,2)--(6,3.75);

	\draw[fill = black, draw = black](2,2) circle (.75 ex) node[black,  above left = -1pt]{$z$};
	\draw[fill = black, draw = black](3,2) circle (.75 ex) node[black, below right = -4pt]{$z+u^1$};
	\draw[fill = black, draw = black](2,3) circle (.75 ex) node[black, above left = -3pt]{$z+u^2$};
	\draw[fill = black, draw = black](3,3) circle (.75 ex);

	
	
	\end{tikzpicture}
	&
		\begin{tikzpicture}[scale = .5]

	\draw[red, opacity = .3, thick] (0,0)--(6,4);

	\draw[draw = none, fill = black!30, opacity = .5](3,2)--(6,2)--(6,0)--(1,0)--cycle;

	\draw[draw = none, fill = black!30, opacity = .5] (3,3)--(5,5)--(-1,5)--(-1,3)--cycle;

	\draw[draw = none, fill = black!30, opacity = .5](2,2) -- (0,0)--(-1,0)--(-1,2)--cycle;

	\draw[draw = none, fill = black!30, opacity = .5] (4,3)--(6,5)--(6,5)--(6,3)--cycle;

	\draw[dashed, thick, color = black](2,2)--(3,2)--(4,3)--(3,3)--cycle;
	
	\node[black] at (5,-0.5){$O_{\flip(\uSet)}(z+u^1)$};


	\foreach \i in {-1,...,6}{
	\foreach \j in {0,...,5}
	\draw[fill = black!50, draw = black!50](\i,\j) circle (.5 ex);
	}


		\draw[ draw = none, fill = red!30, opacity = .3](-1,2)--(-1,0.5)--(4,3)--(3,3)--cycle;
	\draw[red, thick](-1,0.5)--(6,4);
	\draw[red, thick](-1,2)--(6,3.75);
	\draw[red, thick](-1,3)--(6,3);

	\draw[fill = black, draw = black](2,2) circle (.75 ex) node[black,  above left = -1pt]{$z$};
	\draw[fill = black, draw = black](3,2) circle (.75 ex) node[black, below right = -3pt]{$z+u^1$};
	 \draw[fill = black, draw = black](3,3) circle (.75 ex)  node[black, above left = -1pt]{$z+u^1+u^2$};
	 \draw[fill = black, draw = black](4,3) circle (.75 ex);

	 \draw[fill = black, draw = black](-1,1) circle (.75 ex) node[black,  above right = -1pt]{$z^*$};

	
	\end{tikzpicture}
	\\
	\hspace{1.5 cm}\emph{(i)} & \emph{(ii)} \hspace{1.5 cm}
	\end{tabular}
	\caption{\emph{(i)} \textbf{Case \protect \hyperlink{case2}{2}} where $\flip(\uSet) = \uSet'$.
	$\GP(\uSet)$ is also drawn.
	\emph{(ii)} A sample update $\flip(\uSet) = \uSet'$. $\GP(\uSet')$ is also drawn. Note that $\GP(\uSet')$ is connected.
	}\label{figCase22}
	\end{figure}
 Here we have
	\[
	z^*=z+k_1(-u^1)+k_2(-u^2) = z+(k_1-k_2)u^1+k_2(-u^1-u^2),
	\]
	 where $k_1,k_2 \geq 0$ and $k_1\geq k_2+1$.
	Thus,
	\[
	r_\uSet(z^*) = k_1+k_2 \geq (k_1-k_2)+k_2 = r_{\flip(\uSet)}(z^*),
	\]
	and $\flip(\uSet) \ler \uSet$.

		Next, we consider the third outcome of \textbf{{Case \hyperlink{case2}{2}}}:
	\[
	\flip(U) = (-u^1, 2u^1+u^2) ~~\text{and}~~ \flip(\uSet) = \{z, z-u^1, z+u^1+u^2, z+2u^1+u^2\}.
	\]
	In this setting $z^* \in O_{\flip(\uSet)}(z-u^1) \cup O_{\flip(\uSet)}(z)$.
	This is indicated in Figure~\ref{figCase2}.
	The proof when $z^* \in O_{\flip(\uSet)}(z)$ is similar to when $z^* \in O_{\flip(\uSet)}(z- u^1)$, so we consider $z^* \in O_{\flip(\uSet)}(z)$.
	There exist $k_1, k_2 \in \Z_{\ge 0}$ such that
	\begin{align*}
		z^* = z+k_1 (-u^1) + k_2(-u^2) =  z+(-k_1 + 2k_2)u^1+k_2(-2u^1-u^2) .
	\end{align*}
	We have $k_1 > k_2$ because $z^* \in O_\uSet(z)_A$.
	This is equivalent to $k_2 > (-k_1+2k_2)$.
	Thus,
	\begin{align*}
		r_\uSet(z^*) = k_1 + k_2 > (-k_1+2k_2)+k_2 = r_{\flip(\uSet)}(z^*)
	\end{align*}
	and $\flip(\uSet) <_r \uSet$.
	This proves~\ref{lemCase2i} and~\ref{lemCase2ii}.
	
	Now, suppose that $\flip(\uSet) \ef \uSet$ and $\flip(\uSet) \er \uSet$ and $\flip(\uSet) = \uSet'$ or $\flip(\uSet) = \uSet''$.
	We give the proof when $\flip(\uSet) = \uSet'$ as the other proof is similar.
	The definition of $\flip(\uSet) = \uSet'$ in Table~\ref{tableFlips} implies that one of three things must occur: $\uSet' $ is connected, $\GP(\uSet') \cap \uSet' = \{{z}+2u^1+u^2\}$, or $z+u^1+u^2$ strictly cuts $z-u^1$ and $z-u^1$ strictly cuts $z$.
	If $\uSet' $ is connected, then~\ref{lemCase2iii} holds.
	If $\uSet' \cap \GP(\uSet') = \{z+2u^1+u^2\}$, then $\uSet' \lf \uSet$, which contradicts $\flip(\uSet) \ef \uSet$.

	It remains to consider when $z+u^1+u^2$ strictly cuts $z-u^1$ and $z-u^1$ strictly cuts $z$.
	Here, $f(z+u^1+u^2) < f(z)$ and $f(z+u^1+u^2) = \min\{f(w) : w \in \uSet\}$.
	Moreover, $f(z+u^1+u^2) =\min\{f(w):w \in \uSet'\}$ because $\flip(\uSet) \ef \uSet$.
	Thus, $z+u^1+u^2 \in \uSet' \cap \GP(\uSet')$ by Proposition~\ref{propConvexProperties}.
	The definition of \textbf{Case~\hyperlink{case2}{2}} and the preprocessing in Lemma~\ref{eqPreprocess} imply that $z+u^1+u^2$ does not strictly cut any vector in $\uSet' \setminus \{z+u^1+u^2\}$.
  By Lemma~\ref{Lemma:Gen_Gradient_Polyhedron} \ref{Prop:partV} we have $|\uSet ' \cap \GP(\uSet')| \ge 2$.
	%
	%
	If $\uSet'$ was disconnected, then $\uSet' \cap \GP(\uSet') = \{z+u^1+u^2, z+u^1\}$.
	However, $z +u^1 \not \in \uSet' \cap \GP(\uSet')$ because it is strictly cut by $z$ by the preprocessing on $\uSet$.
	Hence, $\uSet'$ is connected.
	In fact, we have shown the stronger statement that if $\flip(\uSet) \ef \uSet$, $\flip(\uSet) \er \uSet$, and $f(z+u^1+u^2) \le f(z)$, then $\uSet'$ is connected.

	It is left to prove~\ref{lemCase2iii} when $\flip(\uSet) \ef \uSet$, $\flip(\uSet) \er \uSet$, and $\flip(U) = (-u^1, 2u^1+u^2)$.
	We claim that this cannot occur.
	By~\ref{lemCase2ii} it must be the case that $\uSet \cap \GP(\uSet) = \{z,z+u^1+u^2\}$ contains a minimizer of~\eqref{eqMainProb}.
	The set $ \{z,z+u^1+u^2\}$ is contained in $\uSet' \cap \uSet''$.
	Hence, $\uSet\ef \uSet' \ef \uSet''$ and $\uSet\er \uSet' \er \uSet''$.
	We demonstrated in the previous paragraph that $\uSet'$ is connected if $f(z+u^1+u^2) \le f(z)$ and that $\uSet''$ is connected if $f(z) \le f(z+u^1+u^2)$.
	Hence, $\flip(U)$ will not equal $(-u^1, 2u^1+u^2)$ in this setting but rather $\flip(\uSet) = \uSet'$ or $\flip(\uSet) = \uSet''$.
	\qed
	\end{proof}
%

Unlike the connected cases, gradient information does not seem sufficient to make a precise flip in {\bf Case~\hyperlink{case2}{2}} without `guessing'.
This guessing is the cause of the more involved subcases.
The necessity of the three conditions for $\flip(\uSet) = \uSet'$ or $\flip(\uSet) = \uSet''$ is reflected in the next result.

\begin{lemma}\label{lemCase2Extras}
Assume $\uSet$ fits into {\bf Case~\hyperlink{case2}{2}} and let $w^*$ minimize $f(v)$ over $\uSet$.
If $\flip(\uSet) = \uSet'$ or $\flip(\uSet) = \uSet''$, then $\flip(\uSet)$ is connected or $w^*$ is strictly cut by a vector in $\flip(\uSet)$.
\end{lemma}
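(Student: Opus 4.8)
The plan is to mimic the final paragraphs of the proof of Lemma~\ref{lemPart2ReduceROutcome}, where it was already shown that $\uSet'$ is connected whenever $f(z+u^1+u^2) \le f(z)$ and the relevant $\er$/$\ef$ hypotheses hold, and symmetrically for $\uSet''$. Here the hypotheses are weaker (we no longer assume $\flip(\uSet) \ef \uSet$ or $\flip(\uSet)\er\uSet$), so I would first dispose of the case distinction inside the definition of $\flip(\uSet)=\uSet'$ in Table~\ref{tableFlips}: if $\uSet'$ is connected we are done; if $\GP(\uSet') \cap \uSet' = \{z+2u^1+u^2\}$ then $\uSet' \lf \uSet$ by Lemma~\ref{lemma:subset}, and in particular $w^*$ (which lies in $\uSet \setminus \uSet'$ in this subcase, or more precisely has $f(w^*) > f(z+2u^1+u^2)$) is strictly cut by $z+2u^1+u^2 \in \flip(\uSet)$ via Proposition~\ref{propConvexProperties}; and if the third trigger holds, namely $z+u^1+u^2$ strictly cuts $z-u^1$ and $z-u^1$ strictly cuts $z$, then $f(z+u^1+u^2) < f(z-u^1) < f(z)$, so $w^* = z+u^1+u^2$, and I reuse the already-established argument that $\uSet'$ is connected in this situation. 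The symmetric trio of subcases for $\flip(\uSet) = \uSet''$ is handled the same way, with $z-u^1$ playing the role of the low vertex.

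The key steps, in order, are: (1) recall from Lemma~\ref{lemPart2ReduceROutcome} and the preprocessing of Lemma~\ref{eqPreprocess} that among $z$ and $z+u^1+u^2$ one strictly cuts the other, so the unique minimizer of $f$ over $\uSet$ is either $z$ or $z+u^1+u^2$ (indeed $w^* = \argmin\{f(z), f(z+u^1+u^2)\}$, and by the \textbf{Case~\hyperlink{case2}{2}} defining assumption $\uSet \cap \GP(\uSet) = \{z, z+u^1+u^2\}$ the other two corners $z+u^1, z+u^2$ are each strictly cut, hence have strictly larger value); (2) split on which of the three OR-conditions in Table~\ref{tableFlips} caused $\flip(\uSet)$ to be set to $\uSet'$ (resp.\ $\uSet''$); (3) in the connected branch the conclusion is immediate; (4) in the ``$\GP(\uSet')\cap\uSet' = \{z+2u^1+u^2\}$'' branch use Lemma~\ref{lemma:subset} ($\uSet \cap \GP(\uSet) \subseteq \flip(\uSet)$, hence $z+u^1+u^2 \in \uSet'$) together with the fact that $z+u^1+u^2 \notin \GP(\uSet')$ means $z+u^1+u^2$ is strictly cut by some vector of $\uSet'$; combined with $\flip(\uSet)\lf\uSet$ this forces that $w^*$ is strictly cut by a vector in $\flip(\uSet)$ (whichever vertex has the minimum value there also attains it over $\uSet$ only if $w^*\in\uSet'$, and a short check with $w^*\in\{z,z+u^1+u^2\}$ closes it); (5) in the third branch deduce $w^* = z+u^1+u^2$ from the strict-cut chain and invoke the ``stronger statement'' already proven inside Lemma~\ref{lemPart2ReduceROutcome} that $\uSet'$ is then connected; (6) repeat verbatim-with-symmetry for $\uSet''$, where the third trigger $z$ strictly cuts $z+2u^1+u^2$ and $z+2u^1+u^2$ strictly cuts $z+u^1+u^2$ gives $f(z) < f(z+u^1+u^2)$, hence $w^* = z$, and the analogue of the stronger statement yields that $\uSet''$ is connected.

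The main obstacle I anticipate is step (4): pinning down exactly when $w^*$ is strictly cut by a vector in $\flip(\uSet)$ rather than just ``$\flip(\uSet)\lf\uSet$''. The subtlety is that $\flip(\uSet)\lf\uSet$ only tells us the \emph{minimum over $\flip(\uSet)$} drops below the minimum over $\uSet$; to get that $w^*$ itself is strictly cut, I need $w^* \in \flip(\uSet)$ so that some vertex of $\flip(\uSet)$ with strictly smaller value strictly cuts it by Proposition~\ref{propConvexProperties}. But $w^* \in \{z, z+u^1+u^2\} = \uSet\cap\GP(\uSet) \subseteq \flip(\uSet)$ by Lemma~\ref{lemma:subset}, so $w^*$ is a vertex of $\flip(\uSet)$, and since $\min\{f(v):v\in\flip(\uSet)\} < \min\{f(v):v\in\uSet\} = f(w^*)$, the $f$-minimizing vertex of $\flip(\uSet)$ strictly cuts $w^*$. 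This also handles the case where $w^* = z + u^1 + u^2$ and $\uSet' \cap \GP(\uSet') = \{z+2u^1+u^2\}$ uniformly. The remaining bookkeeping — checking the precise vertex coordinates so that $z+2u^1+u^2, z-u^1$ etc.\ are indeed the vectors guaranteed to do the cutting — follows mechanically from the matrix definitions in Table~\ref{tableFlips}, exactly as in the companion lemmas, so I would not belabor it.
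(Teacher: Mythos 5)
Your plan follows the paper's decomposition (assume $\flip(\uSet)=\uSet'$ by symmetry, then split on the three trigger conditions in Table~\ref{tableFlips}), and steps (2), (3), and (6) are essentially what the paper does. Two points, however, do not stand up.

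First, a contained but genuine logical error in step (4): you conclude ``the $f$-minimizing vertex of $\flip(\uSet)$ strictly cuts $w^*$'' from the inequality $\min\{f(v):v\in\flip(\uSet)\}<f(w^*)$. This implication is false. Proposition~\ref{propConvexProperties} only gives that cutting forces a function-value inequality, not the converse; for $f(x)=\|x\|_2^2$, the point $(0,\tfrac12)$ has smaller value than $(1,0)$ but does not cut it. Fortunately the correct argument needs no function values at all: since $w^*\in\{z,z+u^1+u^2\}\subseteq\uSet'$ and $w^*\neq z+2u^1+u^2$, the hypothesis $\GP(\uSet')\cap\uSet'=\{z+2u^1+u^2\}$ gives $w^*\in\uSet'\setminus\GP(\uSet')$ directly, i.e.\ some vector of $\uSet'$ strictly cuts $w^*$. (A small side note on your step (1): both $z$ and $z+u^1+u^2$ lie in $\GP(\uSet)$ in {\bf Case~\hyperlink{case2}{2}}, so \emph{neither} strictly cuts the other; your conclusion $w^*\in\{z,z+u^1+u^2\}$ is right, but it comes from $z+u^1$ and $z+u^2$ being the strictly cut vertices, as you later say.)

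Second, and this is the real gap: in step (5) you ``invoke the stronger statement'' proved inside Lemma~\ref{lemPart2ReduceROutcome}, but that statement reads ``if $\flip(\uSet)\ef\uSet$, $\flip(\uSet)\er\uSet$, and $f(z+u^1+u^2)\le f(z)$, then $\uSet'$ is connected,'' and the $\ef$ hypothesis is precisely what you announced you are dropping. In the earlier proof, $\flip(\uSet)\ef\uSet$ is used to deduce that $z+u^1+u^2$ minimizes $f$ over $\uSet'$ and hence $z+u^1+u^2\in\GP(\uSet')$, after which Lemma~\ref{Lemma:Gen_Gradient_Polyhedron}~\ref{Prop:partV} applies; without $\ef$ you cannot obtain $z+u^1+u^2\in\GP(\uSet')$ for free, so the cited statement is not available. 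The paper closes this with an explicit case split: if $z+u^1+u^2\notin\GP(\uSet')$, then $w^*=z+u^1+u^2$ is strictly cut by some vector of $\uSet'$ and the second disjunct of the lemma holds; otherwise $z+u^1+u^2\in\GP(\uSet')$, one checks that $z+u^1+u^2$ does not strictly cut any vector of $\uSet'$ (using the {\bf Case~\hyperlink{case2}{2}} preprocessing, which gives that $z+u^1+u^2$ strictly cuts $z+u^2$ and therefore does not cut $z+2u^1+u^2$), so Lemma~\ref{Lemma:Gen_Gradient_Polyhedron}~\ref{Prop:partV} yields $|\GP(\uSet')\cap\uSet'|\ge 2$, and since $z+u^1$ is strictly cut by $z$, the second element is $z$ or $z+2u^1+u^2$, both adjacent to $z+u^1+u^2$, giving connectivity. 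Your proposal must be augmented with this split; as written, step (5) invokes a lemma whose hypotheses are not satisfied.
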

\begin{proof}
The outcomes $\flip(\uSet) = \uSet'$ or $\flip(\uSet) = \uSet''$ are symmetric, so we assume
	\[
	 \flip(\uSet) = \uSet' = \{z, z+u^1, z+u^1+u^2, z+2u^1+u^2\}.
	\]
The definition of $\flip(\uSet) = \uSet'$ in Table~\ref{tableFlips} implies that one of three things must occur: $\uSet' $ is connected, $\GP(\uSet') \cap \uSet' = \{{z}+2u^1+u^2\}$, or $z+u^1+u^2$ strictly cuts $z-u^1$ and $z-u^1$ strictly cuts $z$.
The first two conditions imply that the lemma holds.
So, assume that $z+u^1+u^2$ strictly cuts $z-u^1$ and $z-u^1$ strictly cuts $z$.

This sequence of strict cuts implies $w^* = z+u^1+u^2$.
If $z+u^1+u^2 \not\in \uSet'\cap \GP(\uSet')$, then the lemma holds.
So, assume $z+u^1+u^2 \in \uSet'\cap \GP(\uSet')$.
Then, $z+u^1+u^2$ is not strictly cut by any vector in $\uSet'$.
By the preprocessing of {\bf Case~\hyperlink{case2}{2}}, the vector $z+u^1+u^2$ strictly cuts $z+u^2$, so it does not cut $z+2u^1+u^2$.
Also, $z+u^1+u^2$ does not strictly cut $z$ or $z+u^1$.
Lemma~\ref{Lemma:Gen_Gradient_Polyhedron} \ref{Prop:partV} then implies $|\GP(\uSet') \cap \uSet'| \ge 2$.
One vector in $\GP(\uSet') \cap \uSet'$ is $z+u^1+u^2$.
The vector $z+u^1$ is strictly cut by $z$ because $\uSet$ falls into {\bf Case~\hyperlink{case2}{2}}, so the second vector in $\GP(\uSet') \cap \uSet'$ must be $z$ or $z+2u^1+u^2$.
Hence, $\uSet'$ is connected.
\qed
\end{proof}
	
\subsection{An analysis of {\bf Case 3}.}

\textbf{{Case \hyperlink{case3}{3}}} is executed if $|H^{-1} \cap \Z^2 | = 1$ or $|H^1 \cap \Z^2|=1$.
The two cases are symmetric, so we often assume $|H^1 \cap \Z^2| = 1$.
	See Figure~\ref{figCase3}.	
			\begin{figure}
	\hspace*{-.25 cm}
	\begin{tabular}{c @{\hskip 1.35 cm}c}
	\begin{tikzpicture}[scale = .5]

	\draw[red, opacity = .3, thick](-1,1)--(6,4.5);
	\draw[red, opacity = .3, thick](-1,1.5)--(6,5);

	\node[] at (-2.5,1) {$O_\uSet(z)$};
	\draw[draw = none, fill = black!30, opacity = .5](-1,0)--(2,0)--(2,2)--(-1,2)--cycle;

	\draw[draw = none, fill = black!30, opacity = .5](3,0)--(3,2)--(6,2)--(6,0)--cycle;
	\node[black] at (5,-0.5){$O_\uSet(z+u^1)$};

	\draw[draw = none, fill = black!30, opacity = .5](-1,3)--(2,3)--(2,5)--(-1,5)--cycle;
	\node[black] at (-.5,5.5){$O_\uSet(z+u^2)$};

	\node[] at (4.5,5.5){$O_\uSet(z+u^1+u^2)$};
	\draw[draw = none, fill = black!30, opacity = .5](3,3)--(3,5)--(6,5)--(6,3)--cycle;

	\draw[dashed, thick, color = black](2,2)--(3,2)--(3,3)--(2,3)--cycle;


	\foreach \i in {-1,...,6}{
	\foreach \j in {0,...,5}
	\draw[fill = black!50, draw = black!50](\i,\j) circle (.5 ex);
	}

		\draw[ draw = none, fill = red!30, opacity = .3](0,1)--(6,4)--(6,3)--cycle;
	\draw[red, thick](-1,.5)--(6,4);
	\draw[red, thick](-1,2/3)--(6,3);
		\draw[fill = red, draw = red](5,3) circle (.75 ex) node[above right, red]{$x$};

	\draw[fill = black, draw = black](2,2) circle (.75 ex) node[black,  above left = -1pt]{$z$};
	\draw[fill = black, draw = black](3,2) circle (.75 ex) node[black, below right = -4pt]{$z+u^1$};
	\draw[fill = black, draw = black](2,3) circle (.75 ex) node[black, above left = -3pt]{$z+u^2$};
	\draw[fill = black, draw = black](3,3) circle (.75 ex);

	\end{tikzpicture}
	&
	\begin{tikzpicture}[scale = .5]

	\node[] at (2,5.5) {$O_{\flip(\uSet)}(z)$};
	\node[] at (2,-0.5) {$O_{\flip(\uSet)}(z+u^1)$};
	\node[] at (8,3.5) {$O_{\flip(\uSet)}(x)$};
	
	\clip (-1.1,-.1) rectangle (6.1,5.1);
	\draw[draw = none, fill = black!30, opacity = .5](6,4)--(2,2)--(-1,1)--(-1,5)--(6,5)--cycle;

	\node[] at (2,-0.5) {$O_{\flip(\uSet)}(z)$};
	\draw[draw = none, fill = black!30, opacity = .5](6,3)--(3,2)--(-1,0)--(6,0)--cycle;
	
	\draw[draw = none, fill = black!30, opacity = .5](5,3)--(7,4)--(8,4)--cycle;
	
	\draw[draw = none, fill = black!30, opacity = .5](0,1)--(-2,0)--(-3,0)--cycle;


	\foreach \i in {-1,...,6}{
	\foreach \j in {0,...,5}
	\draw[fill = black!50, draw = black!50](\i,\j) circle (.5 ex);
	}

	\draw[fill = black, draw = black](2,2) circle (.75 ex) node[black,  above left = -1pt]{$z$};
	\draw[fill = black, draw = black](3,2) circle (.75 ex) node[black, below right  = -2 pt ]{$z+u^1$};
	\draw[fill = black, draw = black](0,1) circle (.75 ex) ;

	\draw[fill = black, draw = black](5,3) circle (.75 ex) node[above]{$x$};
	
	\draw[dashed, thick, color = black](2,2)--(0,1)--(3,2)--(5,3)--cycle;

	\end{tikzpicture}
	\\
	\hspace{1.5 cm}\emph{(i)} & \emph{(ii)} \hspace{1.5 cm}
	\end{tabular}
	\caption{\emph{(i)} The orthants used to prove Theorem~\ref{thmMain1Again} in \textbf{Case~\protect\hyperlink{case3}{3}}.
	$\GP(\uSet)$ is also drawn.
	%
	%
	\emph{(ii)} The orthants of $\flip(\uSet)$ when $|H^1 \cap \GP(\uSet)| = 1$.
	}\label{figCase3}
	\end{figure}

	\begin{lemma}\label{lemPart3ReduceROutcome}
		Suppose $\uSet$ fits into {\bf Case~\hyperlink{case3}{3}}.
	Then
	\begin{enumerate}[leftmargin = .7 cm, label = \textit{(\roman*)}]
	\item $\flip(\uSet) \ler \uSet$.
	\item if $\uSet$ does not contain a minimizer of~\eqref{eqMainProb}, then $\flip(\uSet) \lr \uSet$.
	\item if $\flip(\uSet) \ef \uSet$ and $\flip(\uSet) \er \uSet$, then $\flip(\uSet)$ is connected and $\flip(\uSet) \cap \GP(\flip(\uSet)) \setminus \uSet \neq \emptyset$.
	\end{enumerate}
	\end{lemma}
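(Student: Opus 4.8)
The plan is as follows. By the symmetry between $i=-1$ and $i=1$ I may assume $|H^1\cap\Z^2|=1$; write $x:=z+ku^1+u^2$ for the unique element of $H^1\cap\Z^2$ and $x':=z+(z+u^1)-x=z-(k-1)u^1-u^2$, so that $\overline U:=\flip(U)=(ku^1+u^2,-(k-1)u^1-u^2)$ yields $\flip(\uSet)=\{z,x,z+u^1,x'\}$, a fundamental parallelogram whose diagonals $[z,z+u^1]$ and $[x,x']$ share the midpoint of $[z,z+u^1]$ (see Figure~\ref{figCase3}). First I would record the facts used throughout. Since $\uSet$ is connected and preprocessed, $\uSet\cap\GP(\uSet)=\{z,z+u^1\}$ and no vector of $\uSet$ lies in $\intr(\GP(\uSet))$ (otherwise $\grad(\cdot)^\intercal\mathbf 0=0\not<0$), hence $z,z+u^1\in\GP(\uSet)$, $z+u^2,z+u^1+u^2\notin\GP(\uSet)$, and $z,z+u^1$ do not cut $x$; consequently $k\in\Z\setminus\{0,1\}$, so $\min(|k|,|k-1|)\ge 1$. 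Finally, the relative interior of $\GP(\uSet)\cap(z+\R u^1)$ is an open interval containing no integer point (any integer $z+tu^1$ in it would force $z$ or $z+u^1$ into $\intr(\GP(\uSet))$ through Observation~\ref{Observation:interior}), so $\GP(\uSet)\cap(z+\R u^1)=\conv\{z,z+u^1\}$.

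For \emph{(i)} and \emph{(ii)}: if $\uSet$ contains a minimizer, \emph{(i)} follows from Lemma~\ref{lemContainMin} and \emph{(ii)} is vacuous, so assume it does not; then $z^*\in\intr(\GP(\uSet))$ by Lemma~\ref{Lemma:Gen_Gradient_Polyhedron}\ref{Prop:partI}. Writing $z^*-z=y_1u^1+y_2u^2$, the identity $\min_{e\in\{0,1\}^2}\|v-e\|_1=d(v_1)+d(v_2)$ with $d(t):=\min(|t|,|t-1|)$, together with $\overline U^{-1}(z^*-z)=(y_1-(k-1)y_2,\,y_1-ky_2)=:(s_1,s_2)$, gives $r_\uSet(z^*)=d(y_1)+d(y_2)$ and $r_{\flip(\uSet)}(z^*)=d(s_1)+d(s_2)$. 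Now split on $y_2$. Since $\GP(\uSet)\cap(z+\R u^1)$ has no interior integer point, $y_2\ne 0$; if $y_2=1$ then $z^*\in H^1\cap\Z^2=\{x\}$, so $(s_1,s_2)=(1,0)$ and $r_{\flip(\uSet)}(z^*)=0<d(k)=r_\uSet(z^*)$; if $|y_2|\ge 2$, projecting $[z,z^*]$ and $[z+u^1,z^*]$ onto the line $z+\R u^1\pm u^2$ stays in $\intr(\GP(\uSet))$ by Observation~\ref{Observation:interior}, and combining $z+u^2,z+u^1+u^2\notin\GP(\uSet)$ with $H^1\cap\Z^2=\{x\}$ (which forces $H^1\subseteq(k-1,k+1)$) and the fact that the point where $[x,z^*]$ meets $z+\R u^1$ lies in $\conv\{z,z+u^1\}$, one obtains $|s_2|\le|y_2|-1$ and $s_1$ on the correct side of $\{0,1\}$; a short case check of $d$ then gives $r_{\flip(\uSet)}(z^*)<r_\uSet(z^*)$ in every remaining case. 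This pointwise strict inequality gives $\flip(\uSet)\ler\uSet$ (minimize over optimal $z^*$) and, at a $z^*$ attaining $\min r_\uSet$, $\flip(\uSet)\lr\uSet$. The bulk of the work here is the $|y_2|\ge 2$ projection bound.

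For \emph{(iii)}: assume $\flip(\uSet)\ef\uSet$ and $\flip(\uSet)\er\uSet$. By \emph{(ii)}, $\flip(\uSet)\er\uSet$ forces $\uSet$ to contain a minimizer $z^*$; a global minimizer lies in $\GP(\mathcal X)$ for every finite $\mathcal X\subseteq\Z^2$ by convexity, so $z^*\in\uSet\cap\GP(\uSet)=\{z,z+u^1\}\subseteq\flip(\uSet)\cap\GP(\flip(\uSet))$, and by the $z\leftrightarrow z+u^1$ symmetry of the flip I may take $z^*=z$. Since $z,z+u^1$ do not cut $x$, inspecting the halfspaces of $\GP(\flip(\uSet))$ shows $x\in\GP(\flip(\uSet))$ unless $x'$ strictly cuts $x$, and $x'\in\GP(\flip(\uSet))$ unless $z$ or $z+u^1$ strictly cuts $x'$ (by Proposition~\ref{propConvexProperties}, $x$ cannot strictly cut $x'$ once $x'$ strictly cuts $x$). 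The key observation is that $z$ and $z+u^1$ cannot both strictly cut $x'$ while $x'$ strictly cuts $x$: using $(x'-z)+(x'-(z+u^1))=-(x-x')$ and adding the convexity inequalities for the pairs $(x',z)$ and $(x',z+u^1)$, each strengthened by the assumed strict cut, gives $\grad(x')^\intercal(x-x')<0$, a contradiction. This rules out $\flip(\uSet)\cap\GP(\flip(\uSet))=\{z,z+u^1\}$ (the disconnected diagonal configuration of Lemma~\ref{eqPreprocess}\ref{PreProcess2b} would force both $z$ and $z+u^1$ to strictly cut elements of $\{x,x'\}$, hence, as neither cuts $x$, both to strictly cut $x'$, whereas $x\notin\GP(\flip(\uSet))$ then forces $x'$ to strictly cut $x$). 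The remaining obstruction is the singleton $\{z\}$, where $z$ strictly cuts some vector of $\flip(\uSet)$ by Lemma~\ref{Lemma:Gen_Gradient_Polyhedron}\ref{Prop:partV}, necessarily $x'$, while $x\notin\GP(\flip(\uSet))$ forces $x'$ to strictly cut $x$; I would exclude this by combining these strict cuts with $z+u^2,z+u^1+u^2\notin\GP(\uSet)$ and the uniqueness $|H^1\cap\Z^2|=1$, and this is the technical heart of \emph{(iii)}. With both degenerate configurations excluded, $\flip(\uSet)\cap\GP(\flip(\uSet))$ contains $z$ together with one of $x,x'$, which are adjacent vertices of the parallelogram $\{z,x,z+u^1,x'\}$; so $\flip(\uSet)$ is connected by Lemma~\ref{eqPreprocess}, and since $x,x'\notin\uSet$ (their $u^2$-coordinates relative to $z$ are $1$ and $-1$ while $k\notin\{0,1\}$) that vertex supplies the required element of $\flip(\uSet)\cap\GP(\flip(\uSet))\setminus\uSet$. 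The main obstacle I expect is ruling out the singleton case.
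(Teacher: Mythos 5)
Your proposal attacks the lemma with a genuinely different toolkit from the paper — for \emph{(i)} and \emph{(ii)} you work with the explicit closed form $r_{\uSet}(z^*)=d(y_1)+d(y_2)$ and $r_{\flip(\uSet)}(z^*)=d(s_1)+d(s_2)$ with $d(t)=\min(|t|,|t-1|)$, while the paper works quadrant-by-quadrant; for \emph{(iii)} you introduce the ``sum the gradient-monotonicity inequalities over $(x'-z)+(x'-(z+u^1))=-(x-x')$'' trick, which is elegant and does not appear in the paper's proof. But there are genuine gaps that keep this from being a complete proof.

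First, for \emph{(i)}/\emph{(ii)}, the case $|y_2|\ge 2$ is announced but not established. You would need to show that $z^*\in\intr(\GP(\uSet))$ forces $z^*\notin O_{\flip(\uSet)}(z)\cup O_{\flip(\uSet)}(z+u^1)$, which is where the paper's argument does real work (showing $x\mp u^1\in\conv\{z,x,z^*\}$, hence in $\intr(\GP(\uSet))$, contradicting $|H^1\cap\Z^2|=1$). The gesture at ``projecting $[z,z^*]$ onto $z+\R u^1\pm u^2$'' and the asserted bounds $|s_2|\le|y_2|-1$ and ``$s_1$ on the correct side of $\{0,1\}$'' are not derived, and the sign combination you need (e.g.\ $s_1\ge1$, $s_2\le0$ when $y_1,y_2\ge1$ and $z^*\in O_{\flip(\uSet)}(x)$, so that $d(s_1)+d(s_2)=y_2-1$) is precisely the quadrant fact you are trying to avoid proving.

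Second, your preliminary claim that $\GP(\uSet)\cap(z+\R u^1)=\conv\{z,z+u^1\}$ is not justified as stated. The parenthetical appeal to Observation~\ref{Observation:interior} does not work: an integer point in the relative interior of the one-dimensional slice $\GP(\uSet)\cap(z+\R u^1)$ need not lie in the two-dimensional $\intr(\GP(\uSet))$, so Observation~\ref{Observation:interior} does not apply. Moreover, if $\grad(z)^\intercal u^1=0$ (which is not excluded by the Case~\hyperlink{case3}{3} hypotheses) the line $z+\R u^1$ lies on the boundary of the halfspace from $z$, and it is not clear that the slice is exactly $[z,z+u^1]$. You can salvage the one use you actually need ($y_2\ne 0$) by the direct argument that any integer $z+y_1u^1$ in $\intr(\GP(\uSet))$ with $y_1\ne 0,1$ places $z$ or $z+u^1$ on the open segment to $z^*$ and hence in $\intr(\GP(\uSet))$, but that does not repair the $|y_2|\ge2$ step.

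Third, and most importantly, for \emph{(iii)} you explicitly leave the singleton case $\flip(\uSet)\cap\GP(\flip(\uSet))=\{z\}$ unresolved, and your ``key observation'' is tailored to the situation where both $z$ and $z+u^1$ strictly cut $x'$, which need not hold in the singleton case. (There is also a minor slip: from $z^*$ being a global minimizer you get $z^*\in\GP(\flip(\uSet))$, not $\{z,z+u^1\}\subseteq\GP(\flip(\uSet))$; the inclusion you wrote is too strong.) The paper avoids the case split entirely: it shows that neither $z$ nor $z+u^1$ cuts any vector of $\flip(\uSet)$, using that $|H^1\cap\Z^2|=1$ forces $x\pm u^1\notin\intr(\GP(\uSet))$, hence $x\pm u^1$ must be cut by $z$ or $z+u^1$ (not by $z+u^2,z+u^1+u^2$), hence $z$ cuts $x-u^1$ and $z+u^1$ cuts $x+u^1$, hence neither cuts $x'=z-(k-1)u^1-u^2$. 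Once that is in place, $x$ and $x'$ cannot both be strictly cut (each could only be cut by the other, violating Proposition~\ref{propConvexProperties}), so at least one of them joins $z$ or $z+u^1$ in $\GP(\flip(\uSet))$, giving connectivity and a new vector in one stroke, with no singleton or diagonal case analysis needed. That is the idea your proposal is missing.
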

	\begin{proof}
	First we show that $\flip(\uSet) \leq_r \uSet$.
	By Lemma~\ref{lemContainMin} we assume that $ \uSet$ does not contain a minimizer $z^*$ of~\eqref{eqMainProb}.
	By Lemma~\ref{Lemma:Gen_Gradient_Polyhedron}\ref{Prop:partI}, $z^* \in \intr(\GP(\uSet))$.
	Using symmetry, we assume without loss of generality that $H^1 \cap \Z^2 = \{x\}$ and $x := z + ku^1+u^2$ for $k \ge 2$.
	The setting when $k \le -1$ is also symmetric.
	Here we have
	\[
	\begin{array}{l}
	\flip(U) = (ku^1+u^2, -(k-1)u^1 - u^2) ~~\text{and}\\[.1 cm]
	\flip(\uSet) = \{z, x, z+u^1, z-(k-1)u^1-u^2\}.
	\end{array}
	\]

	If $z^* \in O_\uSet(z+u^1)$, then $z+u^1 \in \conv\{z^*,z, x\}$ yielding the contradiction $z+u^1 \in \intr(\GP(\uSet))$.
	A similar argument shows $z^* \not \in O_\uSet(z+u^2)$.
	Hence, $z^* \in O_\uSet(z) \cup O_\uSet(z+u^1+u^2)$.
	The cases are symmetric, so we assume $z^* \in O_\uSet(z+u^1+u^2)$.
	Thus,
	\[
	z^* =(z+u^1+u^2) + k_1 u^1 + k_2 u^2,
	\]
	 where $k_1, k_2 \ge 0$ and $k_1+ k_2 \ge 1$.
	 If $z^* \in O_{\flip(\uSet)}(z) \cap O_\uSet(z+u^1+u^2)$, then $x-u^1 \in \conv\{z,x,z^*\}$  yielding the contradiction $x-u^1 \in \intr(\GP(\uSet))$.
	 Similarly, $z^* \not \in O_{\flip(\uSet)}(z+u^1) \cap O_\uSet(z+u^1+u^2)$ otherwise $x+u^1 \in \intr(\GP(\uSet))$.
	 Hence, $z^* \in [O_{\flip(\uSet)}(x) \cup O_{\flip(\uSet)}(z - (k-1)u^1- u^2)]\cap O_\uSet(z+u^1+u^2)$.
	 We provide the analysis when $z^* \in O_{\flip(\uSet)}(x) \cap O_\uSet(z+u^1+u^2)$ as the other case is similar.
Here, $z^*$ can be rewritten as
\[
x+(k_1+k_2+1-k-k_2k)(ku^1+u^2)+(k+k_2k-1-k_1)((k-1)u^1+u^2),
\]
and $k_1+k_2+1-k-k_2k$ and $k+k_2k-1-k_1$ are nonnegative.
Thus,
\[
r_\uSet(z^*) = k_1+k_2 >  k_2 = r_{\flip(\uSet)}(z^*)
\]
and $\flip(\uSet) <_r \uSet$.

Finally, suppose that $\flip(\uSet) \ef \uSet$.
Lemma~\ref{lemma:subset} and the equation $\flip(\uSet) \ef \uSet$ imply that $\{z, z+u^1\} \cap \flip(\uSet) \cap \GP(\flip(\uSet)) \neq \emptyset$.
	We show that $x$ or $z-(k-1)u^1-u^2$ is in $\GP(\flip(\uSet))$, which will prove \emph{(iii)}.
	By Proposition~\ref{propConvexProperties} it is enough to show that $z$ and $z+u^1$ do not cut any vector in $\flip(\uSet)$.
The definition of $\flip(\uSet)$ in Table~\ref{tableFlips} implies that $z$ and $z+u^1$ do not cut each other, and they do not cut $ x $.
	The vectors $x-u^1$ and $x+u^1$ are cut by vectors in $\uSet$ because \textbf{Case~\hyperlink{case3}{3}} is executed (as opposed to say \textbf{Case~\hyperlink{case4}{4}}).
	The vectors $z+u^2$ and $z+u^1+u^2$ do not cut any vectors of the form $z+ru^1+u^2$ for $r \ge 0$, otherwise they cut $x$ and \textbf{Case~\hyperlink{case3}{3}} would not be executed.
	Hence, $x-u^1$ and $x+u^1$ are cut by $z$ and $z+u^1$.
	If $z$ strictly cuts $x+u^1$ but not $x$, then it also strictly cuts $z+u^1$, contradiction.
	Thus, $z+u^1$ cuts $x+u^1$, which implies that $z+u^1$ does not cut $z-(k-1)u^1-u^2$.
	Similarly, $z$ cuts $x-u^1$ and does not cut $z-(k-1)u^1-u^2$.
	\qed
	\end{proof}
	
\subsection{An analysis of {\bf Case 4}.}

\textbf{{Case \hyperlink{case4}{4}}} is executed if $|H^{-1} \cap \Z^2 | \ge 2$ or $|H^1 \cap \Z^2| \ge 2$.
The two cases are symmetric, so we always assume $|H^1 \cap \Z^2| \ge 2$.
	Furthermore, we assume that the vector $x := z+ku^1 + u^2 \in H^1\cap \Z^2$ minimizing $|k|$ satisfies $k \ge 2$; if $k \le -1$, then similar arguments can be made.
	Here, we have
	\begin{equation}\label{eqAssumptions4}
	\flip(U) = (u^1, x) ~~\text{and} ~~\flip(\uSet) = \{z, x, z+u^1, x+u^1\}.
	\end{equation}
Figure~\ref{figCase4} illustrates this case.
	\begin{figure}
	\begin{tabular}{c @{\hskip .5 cm}c}
	\begin{tikzpicture}[scale = .5]

	\node[] at (0,-.5) {$O_\uSet(z)$};
	\node[black] at (5,-0.5){$O_\uSet(z+u^1)$};
	\node[black] at (1,5.5){$O_\uSet(z+u^2)$};
	\node[] at (6,5.5){$O_\uSet(z+u^1+u^2)$};
	
	\clip (-.1,-.1) rectangle (7.1,5.1);
	\draw[red, opacity = .3, thick](-1,1/3)--(9,7);
	\draw[red, opacity = .3, thick](-1,1)--(8,7);


	\draw[draw = none, fill = black!30, opacity = .5](-1,0)--(2,0)--(2,2)--(-1,2)--cycle;

	\draw[draw = none, fill = black!30, opacity = .5](3,0)--(3,2)--(7,2)--(7,0)--cycle;

	\draw[draw = none, fill = black!30, opacity = .5](-1,3)--(2,3)--(2,6)--(-1,6)--cycle;


	\draw[draw = none, fill = black!30, opacity = .5](3,3)--(3,6)--(7,6)--(7,3)--cycle;

	\draw[dashed, thick, color = black](2,2)--(3,2)--(3,3)--(2,3)--cycle;


	\foreach \i in {-1,...,7}{
	\foreach \j in {0,...,6}
	\draw[fill = black!50, draw = black!50](\i,\j) circle (.5 ex);
	}

		\draw[ draw = none, fill = red!30, opacity = .3](1,1+1/3)--(9,6+2/3)--(9,4)--cycle;
	\draw[red, thick](-1,0)--(9,6+2/3);
	\draw[red, thick](-1,2/3)--(9,4);
	
	\draw[fill = red, draw = red](4,3) circle (.75 ex) node[right, red]{$x$};
	
	\draw[fill = black, draw = black](2,2) circle (.75 ex) node[black,  above left = -1pt]{$z$};
	\draw[fill = black, draw = black](3,2) circle (.75 ex) node[black, below right = -4pt]{$z+u^1$};
	\draw[fill = black, draw = black](2,3) circle (.75 ex) node[black, above left = -3pt]{$z+u^2$};
	\draw[fill = black, draw = black](3,3) circle (.75 ex);

	\end{tikzpicture}
	&

	\begin{tikzpicture}[scale = .5]

	\node[] at (-1.5,1.5) {$O_{\flip(\uSet)}(z)$};
	\node[] at (3,-0.5){$O_{\flip(\uSet)}(z+u^1)$};
	\node[] at (3,5.5){$O_{\flip(\uSet)}(z+u^1+u^2)$};
		
	\clip (-.1,-.1) rectangle (7.1,5.1);


	\draw[draw = none, fill = black!30, opacity = .5](0,1)--(0,2)--(2,2)--cycle;

	\draw[draw = none, fill = black!30, opacity = .5](0,0)--(0,.5)--(3,2)--(7,2)--(7,0)--cycle;

	\draw[draw = none, fill = black!30, opacity = .5](4,3)--(0,3)--(0,6)--(7,6)--(7, 4.5)--cycle;

	\draw[draw = none, fill = black!30, opacity = .5](5,3)--(7,3)--(7,4)--cycle;

	\draw[dashed, thick, color = black](2,2)--(3,2)--(5,3)--(4,3)--cycle;


	\foreach \i in {-1,...,7}{
	\foreach \j in {0,...,6}
	\draw[fill = black!50, draw = black!50](\i,\j) circle (.5 ex);
	}

	\draw[fill = black, draw = black](2,2) circle (.75 ex) node[black,  above left = -1pt]{$z$};
	\draw[fill = black, draw = black](3,2) circle (.75 ex) node[black, below right = -4pt]{$z+u^1$};
	\draw[fill = black, draw = black](5,3) circle (.75 ex); 

	\draw[fill = black, draw = black](4,3) circle (.75 ex) node[above left]{$x$};
	
	\end{tikzpicture}

	\\
	\hspace{1.5 cm}\emph{(i)} & \emph{(ii)}
	\end{tabular}
	\caption{\emph{(i)} The orthants used to prove Theorem~\ref{thmMain1Again} in \textbf{Case~\protect\hyperlink{case4}{4}}.
	$\GP(\uSet)$ is also drawn.
	%
	%
	\emph{(ii)} The orthants of $\flip(\uSet)$ when $|H^1 \cap \GP(\uSet)| \ge 2$.
	}\label{figCase4}
	\end{figure}

	\begin{lemma}\label{lemPart4ReduceROutcome}
	Suppose $\uSet$ fits into {\bf Case~\hyperlink{case4}{4}}.
	Then
	\begin{enumerate}[leftmargin = .75 cm, label = \textit{(\roman*)}]
	\item $\flip(\uSet) \ler \uSet$.
	\item if $\uSet$ does not contain a minimizer of~\eqref{eqMainProb}, then $\flip(\uSet) \lr \uSet$.
	\item if $\flip(\uSet) \ef \uSet$ and $\flip(\uSet) \er \uSet$, then $\flip(\uSet)$ is connected and $\flip(\uSet) \cap \GP(\flip(\uSet)) \setminus \uSet \neq \emptyset$.
	\end{enumerate}
	\end{lemma}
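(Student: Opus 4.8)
The plan is to mirror the proof of Lemma~\ref{lemPart3ReduceROutcome}. The setup in~\eqref{eqAssumptions4} already fixes $\flip(U)=(u^1,x)$ and $\flip(\uSet)=\{z,x,z+u^1,x+u^1\}$ with $x=z+ku^1+u^2$, $k\ge 2$, and $|k|$ minimal among vectors $z+ku^1+u^2\in H^1\cap\Z^2$ (the case $k\le-1$ being analogous). Write $z^*$ for an optimal solution of~\eqref{eqMainProb}.

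For \emph{(i)} and \emph{(ii)}: if $\uSet$ contains a minimizer then Lemma~\ref{lemContainMin} gives $\flip(\uSet)\er\uSet$, so \emph{(i)} holds and \emph{(ii)} is vacuous; otherwise $z^*\in\intr(\GP(\uSet))$ by Lemma~\ref{Lemma:Gen_Gradient_Polyhedron}~\ref{Prop:partI}. I would first localize $z^*$ exactly as in \textbf{Case~\hyperlink{case3}{3}}: if $z^*\in O_\uSet(z+u^1)$ (resp.\ $O_\uSet(z+u^2)$) then that vector lies in $\conv\{z,x,z^*\}$, so Observation~\ref{Observation:interior} forces it into $\intr(\GP(\uSet))$, which is impossible since it lies in $\uSet$; hence $z^*\in O_\uSet(z)\cup O_\uSet(z+u^1+u^2)$, and I would treat the latter (the former being analogous), writing $z^*=z+(1+k_1)u^1+(1+k_2)u^2$ with $k_1,k_2\ge 0$. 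Expressed in the $\flip(U)$-basis, the second coordinate of $z^*$ is $1+k_2\ge 1$, so $z^*\in O_{\flip(\uSet)}(x)\cup O_{\flip(\uSet)}(x+u^1)$. In the case $z^*\in O_{\flip(\uSet)}(x+u^1)$ a direct rewrite yields $r_{\flip(\uSet)}(z^*)=k_1+k_2-k(1+k_2)<k_1+k_2=r_\uSet(z^*)$. The case $z^*\in O_{\flip(\uSet)}(x)$ is the crux: here I would use the minimality of $|k|$, which gives $x,x+u^1\in\intr(\GP(\uSet))$ (since \textbf{Case~\hyperlink{case4}{4}} means $|H^1\cap\Z^2|\ge 2$ and the integer points of $H^1$ form a contiguous block starting at $k$) while $x-u^1=z+(k-1)u^1+u^2\notin\intr(\GP(\uSet))$. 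Combined with $x\in\intr(\GP(\uSet))$, this shows that for $k\ge 3$ the vector $z$ is the only element of $\uSet$ that can (weakly) cut $x-u^1$; the inequality $\grad(z)^\intercal(x-u^1-z)\ge 0$ together with $\grad(z)^\intercal(x-z)<0$ and $\grad(z)^\intercal(z^*-z)<0$ then forces $k_1\ge(k-1)(1+k_2)$. Independently, evaluating $\grad(z+u^1+u^2)$ against $z,z+u^1\in\GP(\uSet)$, $x\in\intr(\GP(\uSet))$ and $z^*\in\intr(\GP(\uSet))$ yields $k_1>k_2$, which handles $k=2$ (where $x-u^1\in\uSet$). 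Either bound gives $r_{\flip(\uSet)}(z^*)<r_\uSet(z^*)$ by direct computation, and applying this to the $z^*$ that minimizes $r_\uSet$ proves \emph{(i)} and \emph{(ii)}.

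For \emph{(iii)}: since $z+u^2,z+u^1+u^2\notin\GP(\uSet)$ are strictly cut by vectors of $\uSet$, Proposition~\ref{propConvexProperties} gives $\min\{f(v):v\in\uSet\}=\min\{f(z),f(z+u^1)\}$; with $\flip(\uSet)\ef\uSet$ this puts the $f$-minimizer $w$ of $\flip(\uSet)$ in $\{z,z+u^1\}$, and $w\in\GP(\flip(\uSet))$ by Proposition~\ref{propConvexProperties}. Since $x,x+u^1\in\intr(\GP(\uSet))$ while $z,z+u^1\in\uSet$, neither $z$ nor $z+u^1$ cuts $x$ or $x+u^1$; hence at most one of $x,x+u^1$ is strictly cut (by the other), so some $y\in\{x,x+u^1\}$ lies in $\GP(\flip(\uSet))$, giving $\flip(\uSet)\cap\GP(\flip(\uSet))\setminus\uSet\ne\emptyset$. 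For connectedness, $\{z,z+u^1\}$ and $\{x,x+u^1\}$ are opposite edges of the parallelogram $\flip(\uSet)$, so if $w$ and $y$ are adjacent we are done; the only obstruction is $\flip(\uSet)\cap\GP(\flip(\uSet))$ being exactly a diagonal pair $\{z,x+u^1\}$ or $\{z+u^1,x\}$, which I would exclude with Lemma~\ref{Lemma:Gen_Gradient_Polyhedron}~\ref{Prop:partIII} and~\ref{Prop:partIV}: if, say, it equals $\{z,x+u^1\}$, then~\ref{Prop:partIII} says $z+u^1$ and $x$ cut nothing in $\flip(\uSet)$, so each must be strictly cut by $z$ or $x+u^1$; but $z$ cuts neither (as $z,z+u^1\in\GP(\uSet)$ and $x\in\intr(\GP(\uSet))$), so $x+u^1$ strictly cuts both $z+u^1$ and $x$, and a relabeled application of~\ref{Prop:partIV} then forces $x+u^1$ to strictly cut $z$, contradicting $z\in\GP(\flip(\uSet))$; the pair $\{z+u^1,x\}$ is symmetric.

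The step I expect to be the main obstacle is the sub-case $z^*\in O_{\flip(\uSet)}(x)$ of \emph{(i)}/\emph{(ii)}: unlike \textbf{Case~\hyperlink{case3}{3}}, Observation~\ref{Observation:interior} and the non-interiority of $x-u^1$ do not by themselves exclude this region, so one must extract a quantitative gradient inequality and treat $k=2$ separately. The diagonal-pair exclusion in \emph{(iii)} is the secondary subtlety; the remaining steps are routine computations in the $\flip(U)$-basis.
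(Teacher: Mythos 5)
Your plan for part \emph{(iii)} is essentially the paper's argument (minimizer $w^*$ forced into $\{z,z+u^1\}$, one of $x,x+u^1$ not strictly cut, diagonal pairs excluded via Lemma~\ref{Lemma:Gen_Gradient_Polyhedron}~\ref{Prop:partIII}--\ref{Prop:partIV}), and it is correct. Your treatment of the sub-case $z^*\in O_{\flip(\uSet)}(x)$ is a genuinely different route from the paper's: you extract the quantitative bound $k_1\ge(k-1)(1+k_2)$ directly from gradient inequalities (using the minimality of $k$ to see that only $z$ can weakly cut $x-u^1$), whereas the paper establishes the necessary inequality by writing $x-u^1$ as an explicit convex combination of $z$, $x$, $z^*$ and invoking Observation~\ref{Observation:interior}. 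Both arguments are valid; yours is arguably closer in spirit to the ``gradient evaluation'' philosophy of the paper, while the paper's convex-combination argument is more uniform with its treatment of the other orthants.

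However, there is a genuine gap in your proof of \emph{(i)} and \emph{(ii)}: after excluding $O_\uSet(z+u^1)$ and $O_\uSet(z+u^2)$, you say $z^*\in O_\uSet(z)\cup O_\uSet(z+u^1+u^2)$ ``and I would treat the latter (the former being analogous).'' This is false. Unlike \textbf{Case~\hyperlink{case3}{3}}, here $\flip(\uSet)=\{z,z+u^1,x,x+u^1\}$ lies entirely on the $O_\uSet(z+u^1+u^2)$ side of the segment $[z,z+u^1]$, so the two orthants are \emph{not} symmetric with respect to $\flip(\uSet)$, and a parallel computation for $z^*\in O_\uSet(z)$ does not give $\flip(\uSet)\leq_r\uSet$. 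Concretely, take $u^1=(1,0)$, $u^2=(0,1)$, $z=(0,0)$, $k=3$, and $z^*=(-1,-2)\in O_\uSet(z)$: then $r_\uSet(z^*)=3$ but $r_{\flip(\uSet)}(z^*)=6$. The paper handles this by showing $z^*\in O_\uSet(z)$ is outright impossible: if $z^*\in O_\uSet(z)\cap O_{\flip(\uSet)}(z)$ then $z\in\conv\{z^*,z+u^1,x\}$, and if $z^*\in O_\uSet(z)\cap O_{\flip(\uSet)}(z+u^1)$ then $z+u^1\in\conv\{z^*,z,x+u^1\}$; either way Observation~\ref{Observation:interior} puts a vector of $\uSet$ into $\intr(\GP(\uSet))$, a contradiction. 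This exclusion is an essential step of the argument that your write-up is missing, and it cannot be recovered by ``analogy'' with the $O_\uSet(z+u^1+u^2)$ analysis.
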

\begin{proof}
By symmetry we assume~\eqref{eqAssumptions4}, where $x := z+ku^1 + u^2 \in H^1\cap \Z^2$ minimizing $|k|$ satisfies $k \ge 2$.
Using the definition of {\bf Case~\hyperlink{case4}{4}}, we have $x-u^1 \not \in \intr(\GP(\uSet))$.
	%
	
	First, we show that $\flip(\uSet) \leq_r \uSet$.
	By Lemma~\ref{lemContainMin} we assume that $ \uSet$ does not contain a minimizer $z^*$ of~\eqref{eqMainProb}.
	By Lemma~\ref{Lemma:Gen_Gradient_Polyhedron}\ref{Prop:partI}, $z^* \in \intr(\GP(\uSet))$.
	If $z^* \in O_\uSet(z+u^1)$, then $z+u^1 \in \conv\{z^*, z, x\}$ yielding the contradiction $z+u^1 \in \intr(\GP(\uSet)$).
	If $z^* \in O_\uSet(z+u^2)$, then $z+u^2 \in \conv\{z^*, z, x\}$ yielding the contradiction $z+u^2 \in \GP(\uSet)$.
	If $z^* \in O_\uSet(z)$, then $z^* \in O_{\flip(\uSet)}(z) \cup O_{\flip(\uSet)}(z+u^1)$.
	This implies that either
	\(
	z \in \conv\{z^*, z+u^1, x\}
	\)
	and $z \in \intr(\GP(\uSet))$,	or
	\(
	z+u^1 \in \conv\{z^*, z, x+u^1\}
	\)
	and $z +u^1\in \intr(\GP(\uSet))$.
	Both conclusions are contradictions, so $z^* \not \in O_\uSet(z)$.	
	Thus, $z^* \in O_\uSet(z+u^1+u^2) $ and it can be written as
	\begin{equation}\label{eqCase4Opt}
	z^* = (z+u^1+u^2) +k_1u^1 + k_2u^2,
	\end{equation}
	where $k_1, k_2 \ge 0$ and $k_1 + k_2 \ge 1$.
	If $k_1 \le k_2$, then $z+u^1+u^2 \in \conv\{z^*, z, x\}$ yielding the contradiction $z+u^1+u^2 \in \GP(\uSet)$.
	Hence, $k_1 \ge k_2+1$.
	The inclusion $z^* \in O_\uSet(z+u^1+u^2)$ implies that $z^* \in O_{\flip(\uSet)}(x) \cup O_{\flip(\uSet)}(x+u^1)$ (see Figure~\ref{figCase4}).
	We consider both settings.

	If $z^* \in O_{\flip(\uSet)}(x)$, then write	
	\[
	z^* = x + (k(k_2+1)-k_1-1)(-u^1) + k_2 (ku^1+u^2).
	\]
	The vector $x-u^1$ can be written as
	\[
	\frac{k_2}{k(k_2+1)-k_1-1}\cdot z + \frac{k(k_2+1)-k_1-k_2-2}{k(k_2+1)-k_1-1}\cdot x  +\frac{1}{k(k_2+1)-k_1-1}\cdot z^*.
	\]
	Hence,  $k(k_2+1)-k_1-k_2 -2 < 0$ otherwise $x-u^1 \in \conv\{z, x, z^*\}$ yielding the contradiction $x-u^1 \in \intr(\GP(\uSet))$.
	This inequality is equivalent to $ k_1+k_2 > k(k_2+1)-2$.
	Thus,
	\[
	r_\uSet(z^*) = k_1+k_2 > k(k_2+1)-2 \ge k(k_2+1)-k_1 - 1 = r_{\flip(\uSet)}(z^*).
	\]
	Hence, $\flip(\uSet) \lr \uSet$.
	
	If $z^* \in O_{\flip(\uSet)}(x+u^1)$, then rearrange~\eqref{eqCase4Opt} to write $z^*$ as
	\[
	z^* = (x+u^1) + (k_1 - k_2k -k)u^1 + k_2 (ku_1+u_2).
	\]
	Thus,
	\[
	r_\uSet(z^*) = k_1+k_2 > (k_1-k_2k-k)+k_2 = r_{\flip(\uSet)}(z^*).
	\]
	Hence, $\flip(\uSet) \lr \uSet$.

Now, suppose that $\flip(\uSet) \ef \uSet$ and $\flip(\uSet) \er \uSet$.
Let $w^* \in \uSet$ minimize $f$ over $\uSet$.
Lemma~\ref{lemma:subset} and the equation $\flip(\uSet) \ef \uSet$ imply that $w^* \in \flip(\uSet) \cap \GP(\flip(\uSet))$.
	Note $x,x+u^1 \in \intr(\GP(\uSet))$ by the definition of \textbf{Case~\protect\hyperlink{case4}{4}}, so neither are cut by $z$ or $z+u^1$.
	Thus, $\GP(\flip(\uSet)) \cap \flip(\uSet)$ contains a least two vectors: $w^*$ and one of $x$ or $x+u^1$.
	The set $\flip(\uSet)$ can only be disconnected if $\GP(\flip(\uSet)) \cap \flip(\uSet) = \{z, x+u^1\}$ or $\GP(\flip(\uSet)) \cap\flip( \uSet)= \{z+u^1, x\}$.
	The first setting implies that $x+u^1$ strictly cuts $x$ and $z+u^1$ but not $z$, which is not possible by Lemma~\ref{Lemma:Gen_Gradient_Polyhedron}\ref{Prop:partIV}.
	The second setting implies that $x$ strictly cuts $x+u^1$ and $z$ but not $z+u^1$, which is again not possible.
	This proves \emph{(iii)}.
	\qed
	\end{proof}

\subsection{An analysis of {\bf Case 5}.}

We begin by showing
\begin{equation}\label{eqNotBothCase5}
\begin{array}{rl}
H^1 \cap \Z^2 = \emptyset~ &\text{or}~H^{-1} \cap \Z^2  = \emptyset;\\[.1 cm]
\text{if}~H^1 \cap \Z^2 \neq \emptyset, & \text{then } \{z-u^1+u^2\} = H^{1} \cap \Z^2, \text{ and}\\[.1 cm]
\text{if}~H^{-1} \cap \Z^2 \neq \emptyset, &\text{then } \{z+u^1-u^2\}= H^{-1} \cap \Z^2.
\end{array}
\end{equation}
The first statement in~\eqref{eqNotBothCase5} shows that the two outcomes in \textbf{Case~\hyperlink{case5}{5}} cannot both be satisfied.
Let $x:=z+ku^1-u^2 \in H^{-1} \cap \Z^2 $.
If $k \le 0$, then $z \in \conv\{x,z+u^1, z+u^2\}$ which contradicts $z \not\in \intr(\GP(\uSet))$.
If $k \ge 2$, then $z+u^1 \in \conv\{x,z+u^2, z\}$ which contradicts $z+u^1 \not\in \intr(\GP(\uSet))$.
Thus, $H^{-1} \cap \Z^2  \subseteq \{z+u^1-u^2\}$.
Similarly, $H^{1} \cap \Z^2 \subseteq \{z-u^1+u^2\}$.
If  $\{z+u^1-u^2\}= H^{-1} \cap \Z^2 $ and $\{z-u^1+u^2\}= H^{1} \cap \Z^2 $, then $z \in \conv\{z+u^1-u^2,z-u^1+u^2\} \subseteq \intr(\GP(\uSet))$, which is a contradiction.
This proves~\eqref{eqNotBothCase5}.

	\begin{lemma}\label{lemPart5ReduceROutcome}
	Suppose $\uSet$ fits into {\bf Case~\hyperlink{case5}{5}}.
	Then
	\begin{enumerate}[leftmargin = .75 cm, label = \textit{(\roman*)}]
	\item $\flip(\uSet) \ler \uSet$.
	\item if $\uSet$ does not contain a minimizer of~\eqref{eqMainProb}, then $\flip(\uSet) \lr \uSet$
	\item if $\flip(\uSet) \ef \uSet$ and $\flip(\uSet) \er \uSet$, then $\flip(\uSet)$ is connected and $\flip(\uSet) \cap \GP(\flip(\uSet)) \setminus \uSet \neq \emptyset$.
	\end{enumerate}
	\end{lemma}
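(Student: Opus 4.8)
The plan is to mirror the analysis of \textbf{Cases~\hyperlink{case3}{3}} and \textbf{\hyperlink{case4}{4}}: locate an optimal $z^*$ among the four orthants $O_\uSet(\cdot)$, then read off $r_\uSet(z^*)$ and $r_{\flip(\uSet)}(z^*)$ from two coordinate systems and compare. Since the two outcomes of the case are symmetric, I would assume $H^{-1}\cap\Z^2=\{x\}$ with $x:=z+u^1-u^2$ and $H^1\cap\Z^2=\emptyset$ (this dichotomy is exactly~\eqref{eqNotBothCase5}); then $\flip(U)=(u^1-u^2,u^2)$ and $\flip(\uSet)=\{z,\ x=z+\overline u^1,\ z+u^2=z+\overline u^2,\ z+u^1=z+\overline u^1+\overline u^2\}$. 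Writing a point as $z+s_1u^1+s_2u^2$, its $\overline U$-coordinates are $(s_1,\,s_1+s_2)$, which makes all the $\ell_1$-comparisons routine arithmetic. As in the other cases, Lemma~\ref{lemContainMin} reduces everything to the case that $\uSet$ has no minimizer, whence any optimal $z^*$ lies in $\intr(\GP(\uSet))$ by Lemma~\ref{Lemma:Gen_Gradient_Polyhedron}\ref{Prop:partI}.

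First I would eliminate two orthants. If $z^*\in O_\uSet(z)$, then $z$ is a convex combination of $z^*$, $z+u^2$, and $x$ with positive weight on $z^*$, so $z\in\intr(\GP(\uSet))$ by Observation~\ref{Observation:interior}; this is impossible, since the inequality defining $\GP(\uSet)$ for the generator $z$ is non-strict at $z$. If $z^*\in O_\uSet(z+u^1+u^2)$, then any point of $\GP(\uSet)$ with both $U$-coordinates at least $1$ exhibits $z+u^1+u^2$ as a convex combination of $z$, one of $z+u^1,z+u^2$, and that point, contradicting $z+u^1+u^2\notin\GP(\uSet)$. Hence $z^*\in O_\uSet(z+u^1)\cup O_\uSet(z+u^2)$.

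The crux, and the step I expect to be the main obstacle, is pinning down the sign of $s_1+s_2$ so that $z^*$ falls into the orthant of $\flip(\uSet)$ adjacent to the new vector $x$; otherwise the $r$-measure could increase. I would show: if $z^*\in O_\uSet(z+u^1)$ (so $s_1\ge1$, $s_2\le0$), then $s_2\le-1$ (the line $\{s_2=0\}$ meets $\intr(\GP(\uSet))$ only strictly between $z$ and $z+u^1$, which contains no lattice point) and $s_1+s_2\le0$, the latter by a ``crossing-segment'' argument: if $s_1+s_2\ge1$ the segment from $z^*$ to $z+u^1$ crosses $\{s_2=-1\}$ at a point of $\intr(\GP(\uSet))$ with first coordinate $\ge2$, but $H^{-1}$ is a relatively open interval whose only lattice point is $x$ (first coordinate $1$), so its first coordinate lies in $(0,2)$ --- a contradiction. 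Symmetrically, if $z^*\in O_\uSet(z+u^2)$ then $s_1\le-1$, $s_2\ge2$, and $s_1+s_2\ge1$ (if $s_1+s_2\le0$, the segment from $z^*$ to $z$ crosses $\{s_2=1\}$ at an interior point with first coordinate $\le-1$, putting the lattice point $z-u^1+u^2$ in $\intr(\GP(\uSet))\cap\{s_2=1\}$, against $H^1\cap\Z^2=\emptyset$). In the first case $z^*\in O_{\flip(\uSet)}(x)$ with $r_{\flip(\uSet)}(z^*)=|s_2|-1<(s_1-1)+|s_2|=r_\uSet(z^*)$; in the second $z^*\in O_{\flip(\uSet)}(z+u^2)$ with $r_{\flip(\uSet)}(z^*)=s_2-1<|s_1|+s_2-1=r_\uSet(z^*)$. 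This gives \emph{(ii)}, and \emph{(i)} follows by adding back the case that $\uSet$ contains a minimizer via Lemma~\ref{lemContainMin}.

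For \emph{(iii)}, assume $\flip(\uSet)\ef\uSet$ and $\flip(\uSet)\er\uSet$. By \emph{(ii)} the set $\uSet$ contains a minimizer $w^*$, which by Lemma~\ref{lemContainMin} lies in $\uSet\cap\flip(\uSet)=\{z,z+u^1,z+u^2\}$; no vector strictly cuts a global minimizer, so $w^*\in\GP(\flip(\uSet))$, and likewise $x\in\GP(\flip(\uSet))$ because $x\in\intr(\GP(\uSet))$ forces each of $z,z+u^1,z+u^2$ not to cut $x$. Since $x\notin\uSet$, this already yields $\flip(\uSet)\cap\GP(\flip(\uSet))\setminus\uSet\neq\emptyset$. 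For connectedness, $\{x,w^*\}$ is an adjacent pair of the parallelogram $\flip(\uSet)$ --- so $\flip(\uSet)$ is connected --- unless $w^*=z+u^2$, in which case $\{x,z+u^2\}=\{z+\overline u^1,z+\overline u^2\}$ is its other diagonal; I would then assume for contradiction that $\flip(\uSet)\cap\GP(\flip(\uSet))=\{x,z+u^2\}$ exactly, apply Lemma~\ref{Lemma:Gen_Gradient_Polyhedron}\ref{Prop:partIII} (after relabeling $\flip(\uSet)$ so that this diagonal is its ``$\{z',z'+(u')^1+(u')^2\}$'' pair) to get that $z$ and $z+u^1$ strictly cut nothing in $\flip(\uSet)$, and deduce that $x$ must strictly cut both $z$ and $z+u^1$ (since $z+u^2\in\uSet$ cannot strictly cut $z$ or $z+u^1$, both of which lie in $\GP(\uSet)$). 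Translating these strict cuts into $\grad(x)^\intercal(u^2-u^1)>0$ and $\grad(x)^\intercal u^2>0$ and substituting into the convexity inequality $f(z+u^2)\ge f(x)+\grad(x)^\intercal\bigl((z+u^2)-x\bigr)$ gives $f(z+u^2)>f(x)$, contradicting that $w^*=z+u^2$ minimizes $f$ over $\Z^2$. Hence $\flip(\uSet)\cap\GP(\flip(\uSet))$ contains $z$ or $z+u^1$ as well, so it contains an adjacent pair and $\flip(\uSet)$ is connected. The routine parts are the $\ell_1$-arithmetic and the explicit convex-combination coefficients; the delicate points are the two crossing-segment arguments and the final convexity computation in \emph{(iii)}.
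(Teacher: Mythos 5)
Your proof is broadly correct and takes essentially the same route as the paper's: eliminate orthants by exhibiting convex combinations that would place a member of $\uSet$ in $\intr(\GP(\uSet))$, read off the $\ell_1$-comparison in the two coordinate systems, and handle the diagonal case in \emph{(iii)} by deriving a cut-contradiction. The one organizational difference is that the paper eliminates three of the four orthants of $\flip(\uSet)$ directly (via convex combinations such as $z\in\conv\{z^*,x,z+u^2\}$ and $z+u^2\in\conv\{z^*,z,z+u^1\}$), leaving only $O_{\flip(\uSet)}(x)\subseteq O_\uSet(z+u^1)$, whereas you eliminate two orthants of $\uSet$ and split the remainder into your Cases A and B. This reorganization introduces two issues you should be aware of. First, your Case B ($z^*\in O_\uSet(z+u^2)$) is actually vacuous: the paper's eliminations already show that a minimizer in $O_\uSet(z+u^2)$ would force $z\in\intr(\GP(\uSet))$ (when $s_1+s_2\le0$) or $z+u^2\in\intr(\GP(\uSet))$ (when $s_1+s_2\ge1$), so no minimizer lies there. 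Second, and more substantively, your crossing-segment argument in Case B has a gap: the crossing point of the segment $[z^*,z]$ with $\{s_2=1\}$ has first coordinate $s_1/s_2\le-1$, but this alone does not place $z-u^1+u^2$ in $\intr(\GP(\uSet))$; the open interval $H^1$ could a priori lie entirely to the left of $-1$. (Your Case A avoids this because $H^{-1}$ contains the lattice anchor $x$ at first coordinate $1$, pinning $H^{-1}\subseteq(0,2)$; $H^1$ has no such anchor.) The fix is to also use $z+u^2$: by Observation~\ref{Observation:interior}, the half-open segment from the crossing point to $z+u^2$ (excluding $z+u^2$) lies in $\intr(\GP(\uSet))\cap\{s_2=1\}$ and does contain $z-u^1+u^2$. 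Since Case B cannot arise, the gap does not invalidate your final conclusion, but the argument as stated is incomplete. Your proof of \emph{(iii)} is correct; where you compute $f(z+u^2)>f(x)$ directly from the two strict cuts, the paper invokes Lemma~\ref{Lemma:Gen_Gradient_Polyhedron}~\ref{Prop:partIV} to conclude $x$ strictly cuts $z+u^2$ --- these are the same contradiction.
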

	\begin{proof}
	First, we show that $\flip(\uSet) \leq_r \uSet$.
	By Lemma~\ref{lemContainMin} we assume that $ \uSet$ does not contain a minimizer of~\eqref{eqMainProb}.
	By Lemma~\ref{Lemma:Gen_Gradient_Polyhedron}\ref{Prop:partI}, $z^* \in \intr(\GP(\uSet))$.
	
	It was shown in~\eqref{eqNotBothCase5} that either $\flip(U) = (u^1-u^2, u^2)$ or $\flip(U) = (u^1, -u^1+u^2)$, depending on if $H^{-1} \cap \Z^2 \neq \emptyset$ or $H^1 \cap \Z^2 \neq \emptyset$, respectively.
These outcomes are symmetric, so we assume
\[
\flip(U) = (u^1-u^2, u^2) ~~\text{and}~~ \flip(\uSet) =\{z, z+u^1, z+u^2, z+u^1-u^2\}.
\]
Figure~\ref{figCase5} illustrates this setting.

			\begin{figure}
	\hspace*{-.25 cm}
	\begin{tabular}{c @{\hskip 1 cm}c}
	\begin{tikzpicture}[scale = .5]

	\node[] at (-1.5,1) {$O_\uSet(z)$};
	\node[black] at (5,-0.5){$O_\uSet(z+u^1)$};
	\node[black] at (-2,5){$O_\uSet(z+u^2)$};
	\node[] at (5,6.5){$O_\uSet(z+u^1+u^2)$};
	
	\clip (-.1,-.1) rectangle (7.1,6.1);
	\draw[red, opacity = .3, thick](1,7)--(7,-2);

	\draw[draw = none, fill = black!30, opacity = .5](-1,0)--(2,0)--(2,3)--(-1,3)--cycle;

	\draw[draw = none, fill = black!30, opacity = .5](3,0)--(3,3)--(7,3)--(7,0)--cycle;

	\draw[draw = none, fill = black!30, opacity = .5](-1,4)--(2,4)--(2,6)--(-1,6)--cycle;

	\draw[draw = none, fill = black!30, opacity = .5](3,4)--(3,6)--(7,6)--(7,4)--cycle;

	\draw[dashed, thick, color = black](2,3)--(3,3)--(3,4)--(2,4)--cycle;


	\foreach \i in {-1,...,7}{
	\foreach \j in {0,...,6}
	\draw[fill = black!50, draw = black!50](\i,\j) circle (.5 ex);
	}

		\draw[ draw = none, fill = red!30, opacity = .3](3,0)--(1.75, 3.75)--(2.225, 4.225)--(5,0)--cycle;
	\draw[red, thick](1,6)--(3,0);
	\draw[red, thick](0,2)--(4,6);
	\draw[red, thick](1,6)--(5,0);

	\draw[fill = black, draw = black](2,3) circle (.75 ex) node[black,  left = -1pt]{$z$};
	\draw[fill = black, draw = black](3,3) circle (.75 ex) node[black, right = -1pt]{$z+u^1$};
	\draw[fill = black, draw = black](2,4) circle (.75 ex) node[black, left = -1pt]{$z+u^2$};
	\draw[fill = black, draw = black](3,4) circle (.75 ex) node[black, above right = -2pt]{$z+u^1+u^2$};
	
	
	\end{tikzpicture}
	&

	\begin{tikzpicture}[scale = .5]

	\node[] at (0,-.5) {$O_{\flip(\uSet)}(z)$};
	\node[] at (6,6.5){$O_{\flip(\uSet)}(z+u^1)$};
	
	\clip (-.1,-.1) rectangle (7.1,6.1);

	\draw[draw = none, fill = black!30, opacity = .5](0,0)--(2,0)--(2,3)--(0,5)--cycle;

	\draw[draw = none, fill = black!30, opacity = .5](3,3)--(3,6)--(7,6)--(7,0) -- (6,0)--cycle;

	\draw[draw = none, fill = black!30, opacity = .5](2,4)--(2,6)--(0,6)-- cycle;

	\draw[draw = none, fill = black!30, opacity = .5](3,2)--(3,0)--(5,0) -- cycle;

	\draw[dashed, thick, color = black](2,3)--(2,4)--(3,3)--(3,2)--cycle;

	\foreach \i in {-1,...,7}{
	\foreach \j in {0,...,6}
	\draw[fill = black!50, draw = black!50](\i,\j) circle (.5 ex);
	}

	\draw[fill = black, draw = black](2,3) circle (.75 ex) node[black,  left = -1pt]{$z$};
	\draw[fill = black, draw = black](3,3) circle (.75 ex) node[black, above right = -1pt]{$z+u^1$};
	\draw[fill = black, draw = black](2,4) circle (.75 ex) node[black, left = -1pt]{$z+u^2$};
	\draw[fill = black, draw = black](3,2) circle (.75 ex) node[black, below right = -2pt]{$z+u^1-u^2$};

	\end{tikzpicture}
	\\
	\hspace{1.5 cm}\emph{(i)} & \emph{(ii)}
	\end{tabular}
	\caption{\emph{(i)} The orthants used to prove Theorem~\ref{thmMain1Again} in \textbf{Case~\protect\hyperlink{case5}{5}}.
	$\GP(\uSet)$ is also drawn.
	\emph{(ii)} The orthants of $\flip(\uSet) $ when $|H^{-1} \cap \GP(\uSet)| \ge 1$.
	}\label{figCase5}
	\end{figure}

If $z^* \in O_{\flip(\uSet)}(z)$, then $z \in \conv\{z^*, z+u^1-u^2, z+u^2\}$ yielding the contradiction $z \in \intr(\GP(\uSet))$.
Similarly, if $z^* \in O_{\flip(\uSet)}(z+u^2)$, then we obtain the contradiction $z+u^2 \in \intr(\GP(\uSet))$ and if $z^* \in O_{\flip(\uSet)}(z+u^1)$, then we obtain the contradiction $z+u^1 \in \intr(\GP(\uSet))$.
Hence, $z^* \in O_{\flip(\uSet)}(z+u^1-u^2) \subseteq O_{\uSet}(z+u^1)$.
We can write $z^*$ as
\[
\begin{array}{rcl}
z^* &=& (z+u^1) + k_1u^1 +k_2 (-u^2) \\[.1 cm]
& = & (z+u^1 - u^2) +  k_1 (u^1-u^2) + (k_2-k_1-1) (-u^2),
\end{array}
\]
where $k_1, k_2 \ge 0$ and $k_1+k_2 \ge 1$.
Note that $k_2-k_1-1 \ge 0$ because $z^* \in O_{\flip(\uSet)}(z+u^1-u^2)$.
Hence,
\[
r_\uSet(z^*) = k_1+k_2 > k_2 - 1 = r_{\flip(\uSet)}(z^*).
\]
This shows that $\flip(\uSet) \lr \uSet$.

Now, suppose that $\flip(\uSet) \ef \uSet$ and $\flip(\uSet) \er \uSet$.
Let $w^* \in \uSet$ minimize $f$ over $\uSet$.
Lemma~\ref{lemma:subset} and the equation $\flip(\uSet) \ef \uSet$ imply that $w^* \in \flip(\uSet) \cap \GP(\flip(\uSet))$.
Furthermore, because $z+u^1-u^2 \in H^{-1} \subseteq \intr(\GP(\uSet^i))$ it is not cut by $z, z+u^2$, or $z+u^1$.
	Thus, $ \flip(\uSet) \cap \GP( \flip(\uSet))$ contains at least two vectors: $w^*$ and $z+u^1 - u^2$.
	$ \flip(\uSet)$ can only be disconnected if $w^* = z+u^{2}$.
	However, $z+u^2$ does not strictly cut $z$ or $z+u^1$.
	Therefore, $z+u^1-u^2$ strictly cuts $z$ and $z+u^1$ but not $z+u^2$.
	This is not possible by Lemma~\ref{Lemma:Gen_Gradient_Polyhedron}~\ref{Prop:partIV}.
	Thus, $ \flip(\uSet)$ is connected and $z+u^1-u^2 \in \flip(\uSet) \cap \GP(\flip(\uSet)) \setminus \uSet$.
	\qed
\end{proof}
	
\section{Convergence proofs}


\subsection{Convergence to a minimum: The proof of Theorem~\ref{thmMain1}}

Recall that if $\uSet$ contains a vector $v$ with $\grad(v) = \mathbf{0}$, then we do not update $\uSet$.
The fact that each step uses constantly many gradient evaluations follows directly from counting the number of gradient evaluations in every case in Table~\ref{tableFlips}.
\textbf{Cases~\hyperlink{case3}{3}} to {\bf \hyperlink{case5}{5}} require the most evaluations: 12 to determine $\uSet \cap \GP(\uSet)$ and $8$ to determine $H^{-1}$ and $H^{1}$.
If $\uSet$ does not fit into the table, then it is lattice-free by Lemma~\ref{lemNoCase}.
Using the notation $\lef$ and $\ler$, we restate Theorem~\ref{thmMain1} \emph{(i)} and \emph{(ii)} as Theorem~\ref{thmMain1Again}.
The proof of Theorem~\ref{thmMain1Again} follows directly from Lemmata~\ref{lemPart1ReduceROutcome},~\ref{lemPart2ReduceROutcome},~\ref{lemPart3ReduceROutcome},~\ref{lemPart4ReduceROutcome}, and~\ref{lemPart5ReduceROutcome}.
\begin{theorem}\label{thmMain1Again}
Let $\uSet $ be preprocessed as in Lemma~\ref{eqPreprocess}.
If $\uSet$ fits into Table~\ref{tableFlips} and does not contain an optimal solution of~\eqref{eqMainProb}, then at least one of the following holds:
\begin{enumerate}[leftmargin=.85cm, label = {\it(\roman*)}]
\item\label{thmMain1AgainPart1} $\flip(\uSet) \lf \uSet$ and $\flip(\uSet)\ler \uSet$
\smallskip
\item\label{thmMain1AgainPart2}  $\flip(\uSet) \lef \uSet$ and $\flip(\uSet)\lr \uSet$
\smallskip
\item\label{thmMain1AgainPart3}  $\flip(\uSet) \lef \uSet$, $\flip(\uSet)\ler \uSet$, and $\flip(\uSet)$ is connected.
\end{enumerate}
If $\uSet$ is connected, then $\flip(\uSet)$ satisfies (ii).
\end{theorem}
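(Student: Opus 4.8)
The plan is to reduce the last sentence to the case analyses already carried out in Lemmata~\ref{lemPart3ReduceROutcome}, \ref{lemPart4ReduceROutcome}, and~\ref{lemPart5ReduceROutcome}. The one observation to establish first is that a connected $\uSet$ that fits into Table~\ref{tableFlips} must fall into \textbf{Case~\hyperlink{case3}{3}}, \textbf{\hyperlink{case4}{4}}, or \textbf{\hyperlink{case5}{5}}. Connectedness means $\uSet\cap\GP(\uSet)\supseteq\{z,z+u^1\}$, so $|\uSet\cap\GP(\uSet)|\in\{2,3,4\}$. The value $4$ is impossible under the standing hypothesis: by Corollary~\ref{Cor:4_points_lattice_free} it would make $\GP(\uSet)$ lattice-free, and inspecting the table one sees that every connected case requires $|\uSet\cap\GP(\uSet)|\in\{2,3\}$ while every disconnected case requires $z+u^1\notin\GP(\uSet)$, so $\uSet$ would fit into no case at all. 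Since $\uSet$ is already preprocessed (Lemma~\ref{eqPreprocess}) and connectedness excludes the diagonal subcase~\ref{PreProcess2b}, this leaves $\uSet\cap\GP(\uSet)=\{z,z+u^1\}$ or $\uSet\cap\GP(\uSet)=\{z,z+u^1,z+u^2\}$.

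Next I would pin down the case. If $\uSet\cap\GP(\uSet)=\{z,z+u^1\}$ and both $H^1\cap\Z^2$ and $H^{-1}\cap\Z^2$ were empty, then $\GP(\uSet)$ would be lattice-free by Lemma~\ref{lemma:lattice_freeness1}, so $\uSet$ would fit into no case, contradicting the hypothesis; hence $|H^i\cap\Z^2|\ge1$ for some $i\in\{-1,1\}$, and $\uSet$ falls into \textbf{Case~\hyperlink{case3}{3}} if the smallest such cardinality is $1$ and into \textbf{Case~\hyperlink{case4}{4}} if it is at least $2$. Likewise, if $\uSet\cap\GP(\uSet)=\{z,z+u^1,z+u^2\}$, then Lemma~\ref{lemma:lattice_freeness1} forces $H^i\cap\Z^2\neq\emptyset$ for some $i$, so $\uSet$ falls into \textbf{Case~\hyperlink{case5}{5}}.

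Finally, in each of these three cases the conclusion drops out of results already proved. Lemma~\ref{lemma:subset} gives $\flip(\uSet)\lef\uSet$ unconditionally. Since $\uSet$ does not contain an optimal solution of~\eqref{eqMainProb} by hypothesis, part~\emph{(ii)} of whichever of Lemmata~\ref{lemPart3ReduceROutcome}, \ref{lemPart4ReduceROutcome}, \ref{lemPart5ReduceROutcome} applies gives $\flip(\uSet)\lr\uSet$. Together these say $\flip(\uSet)\lef\uSet$ and $\flip(\uSet)\lr\uSet$, which is exactly statement~\ref{thmMain1AgainPart2}. The only part that needs attention is the bookkeeping in the first two paragraphs---confirming that ``connected'' plus ``fits into Table~\ref{tableFlips}'' forces one of \textbf{Cases~\hyperlink{case3}{3}}--\textbf{\hyperlink{case5}{5}}---and that rests entirely on Corollary~\ref{Cor:4_points_lattice_free} and Lemma~\ref{lemma:lattice_freeness1}, with no new computation required.
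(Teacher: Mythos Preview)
Your argument for the last sentence is correct and essentially matches the paper's approach: you correctly identify that a connected $\uSet$ fitting the table must land in \textbf{Case~\hyperlink{case3}{3}}, \textbf{\hyperlink{case4}{4}}, or \textbf{\hyperlink{case5}{5}}, and then Lemma~\ref{lemma:subset} together with part~\emph{(ii)} of the relevant case lemma gives $\flip(\uSet)\lef\uSet$ and $\flip(\uSet)\lr\uSet$, i.e.\ conclusion~\ref{thmMain1AgainPart2}.

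However, you only treat the last sentence. The theorem's first assertion---that at least one of \ref{thmMain1AgainPart1}, \ref{thmMain1AgainPart2}, \ref{thmMain1AgainPart3} holds for \emph{every} $\uSet$ fitting the table---still needs to be established for the disconnected \textbf{Cases~\hyperlink{case1}{1}} and~\textbf{\hyperlink{case2}{2}}, and your write-up says nothing about them. The paper handles this by citing Lemmata~\ref{lemPart1ReduceROutcome} and~\ref{lemPart2ReduceROutcome} alongside the three you use. The missing step is a short trichotomy: Lemma~\ref{lemma:subset} gives $\flip(\uSet)\lef\uSet$, and part~\emph{(i)} of Lemma~\ref{lemPart1ReduceROutcome} or~\ref{lemPart2ReduceROutcome} gives $\flip(\uSet)\ler\uSet$. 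If $\flip(\uSet)\lf\uSet$ then~\ref{thmMain1AgainPart1} holds; if $\flip(\uSet)\ef\uSet$ but $\flip(\uSet)\lr\uSet$ then~\ref{thmMain1AgainPart2} holds; and if both $\flip(\uSet)\ef\uSet$ and $\flip(\uSet)\er\uSet$, then part~\emph{(ii)} of Lemma~\ref{lemPart1ReduceROutcome} (respectively part~\emph{(iii)} of Lemma~\ref{lemPart2ReduceROutcome}) yields that $\flip(\uSet)$ is connected, so~\ref{thmMain1AgainPart3} holds. Add this paragraph and the proof is complete.
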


Theorem~\ref{thmMain1Again} states that either $\lef$ or $\ler$ decreases after at most two updates.
Assumption~\eqref{eqLevelSets} implies that $\lef$ and $\ler$ can only decrease a finite number of times before $\uSet$ contains a minimizer of~\eqref{eqMainProb}.
\begin{theorem}\label{thm:converge_towards_minimum}
If~$\uSet^0$ is unimodular and $(\uSet^i)_{i=0}^\infty$ is constructed using Table~\ref{tableFlips}, then there exists an index $T_1$ such that ~$\uSet^{i}$ contains an optimal solution of~\eqref{eqMainProb} for all $i \ge T_1$.
\end{theorem}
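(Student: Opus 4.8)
The plan is to combine Theorem~\ref{thmMain1Again} with a monotonicity/finiteness argument driven by Assumption~\eqref{eqLevelSets}. First I would set up the two potential functions along the sequence: let $\phi_i := \min\{f(z) : z \in \uSet^i\}$ be the quantity in~\eqref{eqFunctionVal} and $\rho_i := \min\{r_{\uSet^i}(z^*) : z^* \text{ optimal for }\eqref{eqMainProb}\}$ be the quantity in~\eqref{eqDistVal}; both are well-defined, $\rho_i$ because of Assumption~\eqref{eqLevelSets} as noted after~\eqref{eqfNotation2}. By Lemma~\ref{lemma:subset} the sequence $(\phi_i)$ is non-increasing, and Theorem~\ref{thmMain1Again} parts~\ref{thmMain1AgainPart1}--\ref{thmMain1AgainPart3} show $(\rho_i)$ is non-increasing as well (each of the three alternatives gives $\flip(\uSet^i) \ler \uSet^i$). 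So $(\phi_i, \rho_i)$ decreases in the lexicographic-independent sense: neither coordinate ever goes up.

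Next I would argue that the values $\phi_i$ can take, and the values $\rho_i$ can take, each lie in a finite set. For $\phi_i$: whenever $\uSet^{i+1} \neq \uSet^i$ in a way that strictly decreases $\phi$, Lemma~\ref{lemma:subset} gives $\uSet^{i+1}\cap \GP(\uSet^{i+1}) \ne \emptyset$ and the new minimizing vector $w$ satisfies $f(w) = \phi_{i+1} < \phi_i \le \phi_0$, hence $w$ lies in the level set $\{z : f(z) \le \phi_0\}$, which is bounded by~\eqref{eqLevelSets} and therefore contains finitely many integer points; so $\phi$ takes only finitely many distinct values along the sequence. For $\rho_i$: it is a nonnegative integer (it is a minimum of $\ell_1$-norms of integer vectors, since $U^i$ is unimodular), so it can strictly decrease only finitely many times. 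Combining these, the set of indices $i$ at which $\phi_i$ or $\rho_i$ strictly decreases is finite; let $T_1$ be one more than the largest such index (or $T_1 := 0$ if there is none).

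Finally I would show that for $i \ge T_1$, $\uSet^i$ contains an optimal solution. Suppose not; pick $i \ge T_1$ with $\uSet^i$ not containing an optimizer. Since $\phi$ and $\rho$ are both constant from index $T_1$ on, we have $\phi_i = \phi_{i+1} = \phi_{i+2}$ and $\rho_i = \rho_{i+1} = \rho_{i+2}$, i.e.\ $\flip(\uSet^i) \ef \uSet^i$, $\flip(\uSet^i)\er\uSet^i$, and likewise from $\uSet^{i+1}$ to $\uSet^{i+2}$. But then Theorem~\ref{thmMain1}~(ii), equivalently the ``at least two updates'' conclusion assembled from Lemmata~\ref{lemPart1ReduceROutcome}--\ref{lemPart5ReduceROutcome}, forces either a strict decrease of~\eqref{eqFunctionVal} or~\eqref{eqDistVal} from $\uSet^{i+1}$ to $\uSet^{i+2}$ — contradicting constancy — or that $\uSet^i$ already contains a minimizer of~\eqref{eqMainProb}, contradicting our assumption. (If $\uSet^i$ happens not to fit into Table~\ref{tableFlips}, then by Lemma~\ref{lemNoCase} $\GP(\uSet^i)$ is lattice-free, which by the convexity inequality forces $\uSet^i$ to contain a minimizer, again a contradiction.) Hence every $\uSet^i$ with $i \ge T_1$ contains an optimal solution of~\eqref{eqMainProb}, and the constantly-many-gradient-evaluations claim is inherited from Theorem~\ref{thmMain1}.

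The main obstacle I anticipate is bookkeeping the ``after at most two updates'' structure cleanly: Theorem~\ref{thmMain1Again} as stated compares $\uSet^i$ with $\flip(\uSet^i)$, but the real progress guarantee (part (ii) of Theorem~\ref{thmMain1}) is about the two-step transition $\uSet^i \to \uSet^{i+1} \to \uSet^{i+2}$, so I must be careful that the ``constant'' regime past $T_1$ really does trigger the two-step decrease rather than getting stuck on a single stalled step; this is exactly what the connectedness conclusions in part~\ref{thmMain1AgainPart3} and in the ``Extras'' lemmas (Lemma~\ref{lemCase1Extras}, Lemma~\ref{lemCase2Extras}, and the analogous statements inside Lemmata~\ref{lemPart3ReduceROutcome}--\ref{lemPart5ReduceROutcome}) are designed to supply, and I would lean on them to rule out an infinite stall.
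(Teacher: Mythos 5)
Your proposal is correct and follows essentially the same route as the paper, which gives no explicit proof block for this theorem but relies on the two sentences immediately preceding its statement: ``Theorem~\ref{thmMain1Again} states that either $\lef$ or $\ler$ decreases after at most two updates. Assumption~\eqref{eqLevelSets} implies that $\lef$ and $\ler$ can only decrease a finite number of times before $\uSet$ contains a minimizer of~\eqref{eqMainProb}.'' You have merely expanded this into a complete argument: exhibiting why each of the two potentials can strictly decrease only finitely often (boundedness of the sublevel set for $\phi_i$; non-negative integrality for $\rho_i$), and then invoking the two-step guarantee of Theorem~\ref{thmMain1}~(ii) to rule out an infinite stall. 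The one place you could tighten the bookkeeping is the stalled/terminating case: you handle $\uSet^i$ not fitting into Table~\ref{tableFlips}, but a symmetric remark is needed for $\uSet^{i+1}$ (if $\uSet^{i+1}$ has no applicable case, Lemma~\ref{lemNoCase} gives lattice-freeness, hence $\uSet^{i+1}$ contains a minimizer, and then $\phi_i=\phi_{i+1}=f(z^*)$ shows $\uSet^i$ does too); this is a minor omission and does not affect the soundness of the overall argument.
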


We end this section with a convergence result.
Suppose $f$ is $L$-Lipschitz continuous and $c$-strongly convex.
We say $f$ is $c$-strongly convex for $c > 0$ if
\[
f(\overline{z}) \ge f(z) + \grad(z)^\intercal (\overline{z} - z) + c \cdot \|\overline{z} - z\|_2^2 \quad \forall ~ z, \overline{z} \in \R^2.
\]
Denote the identity matrix by $\mathbb{I}^2$.

\begin{proposition}\label{corConvergence}
Let $\uSet^0  = \uSet(z^0, U^0)$ be preprocessed as in Lemma~\ref{eqPreprocess} with $U^0 = \mathbb{I}^2$.
Let $z^* \in \Z^2$ be an optimal solution to~\eqref{eqMainProb}.
After $T_1 \le (6L/c+2) \cdot r_{\uSet^0}(z^*)$ many updates via Table~\ref{tableFlips}, the set $\uSet^{T_1}$ contains an optimal solution to~\eqref{eqMainProb}.
The set $\uSet^{T_1}$ is not necessarily lattice-free.
\end{proposition}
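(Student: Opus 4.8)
The plan is to follow two monotone measures of progress along the sequence $(\uSet^i)_{i=0}^\infty$ and argue that neither can stall for long. Write $\mu^i := \min\{f(z) : z\in\uSet^i\}$ and $\delta^i := r_{\uSet^i}(z^*)$, and let $T_1$ be the \emph{first} index for which $\uSet^{T_1}$ contains an optimal solution (so for every $i<T_1$ the set $\uSet^i$ contains no optimal solution and update $i$ is covered by Theorem~\ref{thmMain1Again}). First I would record the basic properties of these measures. The value $\delta^i$ is a nonnegative integer; it is unchanged by the preprocessing of Lemma~\ref{eqPreprocess} (relabelling the anchor does not alter $\uSet^i$ as a set of points, and replacing a column of $U^i$ by its negative preserves $\|(U^i)^{-1}(\cdot)\|_1$); it is non-increasing by the $\ler$-parts of Lemmata~\ref{lemPart1ReduceROutcome}--\ref{lemPart5ReduceROutcome} applied to the fixed $z^*$; and $\delta^i=0$ iff $z^*\in\uSet^i$. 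The value $\mu^i$ is non-increasing by Lemma~\ref{lemma:subset}; it satisfies $\mu^i\ge f(z^*)$ because $z^*$ minimizes $f$ over $\Z^2$; and, crucially using $U^0=\mathbb{I}^2$, we have $\mu^0\le f(z^*)+L\,r_{\uSet^0}(z^*)$, since $r_{\uSet^0}(z^*)$ is then the ordinary $\ell_1$-distance from $z^*$ to the nearest vertex $w^0$ of $\uSet^0$ and Lipschitz continuity gives $f(w^0)-f(z^*)\le L\|w^0-z^*\|_2\le L\|w^0-z^*\|_1$.

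Next I would classify the updates $0,1,\dots,T_1-1$ using Theorem~\ref{thmMain1Again}. Call update $i$ of \emph{type $A$} if $\delta$ strictly decreases at it; since $\delta^0=r_{\uSet^0}(z^*)$, $\delta$ never increases, and each type-$A$ update lowers $\delta$ by at least one, there are at most $r_{\uSet^0}(z^*)$ type-$A$ updates. Call update $i$ of \emph{type $C$} if $\delta$ and $\mu$ are both unchanged at it; by Theorem~\ref{thmMain1Again} this is possible only in its case \emph{(iii)}, so $\uSet^{i+1}=\flip(\uSet^i)$ is connected, and then the final sentence of Theorem~\ref{thmMain1Again} forces update $i+1$ to be of type $A$; hence distinct type-$C$ updates are followed by distinct type-$A$ updates (up to one exceptional update at index $T_1-1$), so there are at most $r_{\uSet^0}(z^*)+1$ of them. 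The remaining updates are of \emph{type $B$}: $\delta$ unchanged, $\mu$ strictly decreasing. Consequently $T_1\le 2\,r_{\uSet^0}(z^*)+1+|B|$, and everything reduces to bounding $|B|$.

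To bound $|B|$, observe first that a type-$B$ update must have $\uSet^i$ disconnected, since by the last sentence of Theorem~\ref{thmMain1Again} a connected $\uSet^i$ (which, not containing an optimal solution, fits into {\bf Cases~3--5}) forces $\delta$ to strictly decrease. Hence $\uSet^i$ falls into {\bf Case~1} or {\bf Case~2}, and in the latter the outcome $\flip(U)=(-u^1,2u^1+u^2)$ is excluded by Lemma~\ref{lemPart2ReduceROutcome}~\ref{lemCase2ii} (it strictly decreases $\delta$). Now Lemma~\ref{lemCase1Extras}, respectively Lemma~\ref{lemCase2Extras}, provides a dichotomy: either $\uSet^{i+1}$ is connected, or the $f$-minimizer $w^i$ of $\uSet^i$ is strictly cut by some $v\in\uSet^{i+1}$. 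In the connected alternative, update $i+1$ is of type $A$ (again by the last sentence of Theorem~\ref{thmMain1Again}), so such updates number at most $r_{\uSet^0}(z^*)+1$. In the ``strictly cut'' alternative, $v\neq w^i$ and strong convexity gives
\[ \mu^i-\mu^{i+1}\;\ge\; f(w^i)-f(v)\;>\;c\,\|w^i-v\|_2^2\;\ge\;c, \]
since $w^i$ and $v$ are distinct integer vectors; as $\mu$ is non-increasing and bounded below by $f(z^*)$, the total of all such drops is at most $\mu^0-f(z^*)\le L\,r_{\uSet^0}(z^*)$, so there are at most $(L/c)\,r_{\uSet^0}(z^*)$ of them. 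This already bounds $T_1$ by a quantity of the form $(\mathrm{const}_1\cdot L/c+\mathrm{const}_2)\,r_{\uSet^0}(z^*)$.

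A slightly more careful version of this bookkeeping — grouping the updates into the at most $r_{\uSet^0}(z^*)$ ``phases'' delimited by the type-$A$ updates and bounding the number of type-$B$ updates inside each phase, all but the last of which drop $\mu$ by more than $c$ — yields the claimed $T_1\le(6L/c+2)\,r_{\uSet^0}(z^*)$; the final remark that $\uSet^{T_1}$ need not be lattice-free is immediate, since lattice-freeness is never used or enforced in the argument. I expect the main obstacle to be exactly the dichotomy used for $|B|$: one must extract from Lemmata~\ref{lemCase1Extras} and~\ref{lemCase2Extras} that whenever $\mu$ strictly decreases while $\delta$ stays fixed, the improvement is realised by a vector that strictly cuts the previous $f$-minimizer (hence improves $f$ by at least $c$ via strong convexity), the only escape being that the updated set becomes connected and thereby forces $\delta$ to drop at the very next step.
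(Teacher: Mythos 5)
Your proof is correct and follows essentially the same strategy as the paper's: the same dichotomy from Lemmata~\ref{lemCase1Extras} and~\ref{lemCase2Extras} (either $\flip(\uSet)$ is connected, forcing the $\ler$-measure to drop at the following step, or the current $f$-minimizer is strictly cut, forcing a drop of at least $c$ in $\mu$ via strong convexity), the same Lipschitz bound on the total $\mu$-decrease, and the same accounting of $\delta$-decrements against $r_{\uSet^0}(z^*)$. The paper organizes the count via two-step windows rather than your explicit type-$A$/$B$/$C$ classification, and uses the slightly looser estimate $\mu^0-f(z^*)\le L(r_{\uSet^0}(z^*)+2)$ (comparing the $f$-minimizing and $\ell_1$-nearest vertices of $\uSet^0$) where you use the tighter $L\cdot r_{\uSet^0}(z^*)$ directly; these differences are cosmetic.
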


\begin{proof}
Let $i \ge 0$ and consider updating $\uSet^i = (z^i, U^i)$ to $\flip(\uSet^i) = \uSet^{i+1}$.
If $\uSet^i$ is connected, which occurs in \textbf{Cases~\hyperlink{case3}{3}} to {\bf \hyperlink{case5}{5}}, then $ \uSet^{i+1} \lr \uSet^i$ by Lemmata~\ref{lemPart3ReduceROutcome},~\ref{lemPart4ReduceROutcome}, and~\ref{lemPart5ReduceROutcome}.

Analysis of the disconnected cases uses the following argument.
If $z,w \in \Z^2$ are such that $z$ strictly cuts $w$, then by strong convexity we have
\[
f(w)  \ge f(z) + \grad(z)^\intercal (w-z) + c \cdot \|w - z\|_2^2
 > f(z)+c.
\]
Hence, the difference between $f(w)$ and $f(z)$ is at least $c$.

\textbf{Case~\hyperlink{case2}{2}} has three outcomes.
If the third outcome occurs, i.e., $\flip(U) = (-u^1, 2u^1+u^2)$, then $ \uSet^{i+1} \lr \uSet^i$ by Lemma~\ref{lemPart2ReduceROutcome}.
Otherwise, Lemma~\ref{lemCase2Extras} states that $\uSet^{i+1}$ is connected, in which case $ \uSet^{i+2} \lr \uSet^{i+1} \ler \uSet^i$, or the vector $w^*$ minimizing $f$ over $\uSet^i$ is strictly cut by some vector in $\uSet^{i+1}$, in which case
\begin{equation}\label{eqDropByC}
\min\{f(w) : w \in \uSet^{i+1} \} +c \le \min\{f(w) : w \in \uSet^{i} \}.
\end{equation}

Consider \textbf{Case~\hyperlink{case1}{1}}.
By Lemma~\ref{lemCase1Extras}, either $\uSet^{i+1}$ is connected, in which case $ \uSet^{i+2} \lr \uSet^{i+1} \ler \uSet^i$, or the vector $w^*$ minimizing $f$ over $\uSet^i$ is strictly cut by some vector in $\uSet^{i+1}$, in which case~\eqref{eqDropByC} holds.

Overall we have shown that either $\uSet^{i+2} \lr \uSet^{i}$ or~\eqref{eqDropByC} holds, independent of the case $\uSet^i$ falls into.
The value with respect to $\ler$ can decrease by at most $r_{\uSet^0}(z^*)$.
The minimum function value can decrease by at most the amount $\delta^f - f(z^*)$, where $w^* \in \uSet^0$ satisfies
\[
\delta^f :=f(w^*) = \min\{f(w) : w \in \uSet^0\}.
\]
Suppose $v^* \in \uSet^0$ satisfies $\|v^* - z^*\|_1 = r_{\uSet^0}(z^*)$.
Because $U^0 = \mathbb{I}^2$ we have $\|w^* - v^*\|_1 \le 2$ and
\[
\delta^f - f(z^*) = f(w^*) - f(z^*) \le L \cdot \|w^* - z^*\|_2 \le L \cdot \|w^* - z^*\|_1\le L \cdot (r_{\uSet^0}(z^*)+2).
\]
If $ r_{\uSet^0}(z^*) = 0$, then the lemma is trivially true.
If $ r_{\uSet^0}(z^*) \ge 1$, then $\uSet^i$ must contain a minimizer of~\eqref{eqMainProb} after at most
\[
2 \bigg[\frac{\delta^f - f(z^*)}{c}+r_{\uSet^0}(z^*)\bigg]
\le
2 \bigg[\frac{L \cdot (r_{\uSet^0}(z^*)+2)}{c}+ r_{\uSet^0}(z^*)\bigg] \le \bigg(\frac{6L}{c}+2\bigg)  r_{\uSet^0}(z^*)
\]
many updates.
\qed
\end{proof}

\subsection{Convergence towards a lattice-free set: The proof of Theorem~\ref{thmMain2}.}\label{subsecConvToLFree}

Let $\uSet^0$ be preprocessed as in Lemma~\ref{eqPreprocess}.
For $i \ge 0$ set $\uSet^{i+1} = \flip(\uSet^{i})$ and preprocess $\uSet^{i+1}$ via Lemma~\ref{eqPreprocess}.
By Theorem~\ref{thm:converge_towards_minimum} and Lemma~\ref{lemma:subset}, there exists $T_1 \ge 1$ such that $\uSet^i\cap\GP(\uSet^i)$ contains an optimal solution of~\eqref{eqMainProb} for all $i \ge T_1$.
Relabeling indices allows us to assume $T_1 = 0$.
After at most one flip each $\uSet^i$ is also connected.
\begin{lemma}\label{obs:Always_at_least_2}
	$\flip(\uSet^i)$ is connected for all $i \ge 1$.
\end{lemma}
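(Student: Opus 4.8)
The plan is to observe that along the tail of the sequence neither progress measure can change, and then to invoke the ``equal $\Rightarrow$ connected'' clauses already established case by case in Section~\ref{secFlippingAlg}. Fix $i \ge 1$ such that $\uSet^i$ fits into Table~\ref{tableFlips}; if it does not, then $\GP(\uSet^i)$ is lattice-free by Lemma~\ref{lemNoCase} and there is nothing to flip. After the relabeling that put $T_1 = 0$, the set $\uSet^i$ contains an optimal solution $z^*$ of~\eqref{eqMainProb}.

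First I would check that both measures are constant at step $i$. Since $z^*$ minimizes $f$ over all of $\Z^2$, it minimizes $f$ over the finite set $\uSet^i$, so $\min\{f(w) : w \in \uSet^i\}$ equals the optimal value of~\eqref{eqMainProb}; moreover $r_{\uSet^i}(z^*) = 0$, so the minimum $r$-value in~\eqref{eqfNotation2} taken over $\uSet^i$ is $0$. By Lemma~\ref{lemContainMin} the same vector $z^*$ lies in $\flip(\uSet^i)$ and satisfies $r_{\flip(\uSet^i)}(z^*) = 0$, so the identical reasoning applied to $\flip(\uSet^i)$ shows its minimum function value is again the optimal value and its minimum $r$-value is again $0$. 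Hence $\flip(\uSet^i) \ef \uSet^i$ and $\flip(\uSet^i) \er \uSet^i$.

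Now I would split on which case of Table~\ref{tableFlips} the set $\uSet^i$ falls into and apply the matching structural lemma with the two equalities above: Lemma~\ref{lemPart1ReduceROutcome}\,(ii) for \textbf{Case~\hyperlink{case1}{1}}, and Lemma~\ref{lemPart2ReduceROutcome}\,\ref{lemCase2iii}, Lemma~\ref{lemPart3ReduceROutcome}\,(iii), Lemma~\ref{lemPart4ReduceROutcome}\,(iii), and Lemma~\ref{lemPart5ReduceROutcome}\,(iii) for \textbf{Cases~\hyperlink{case2}{2}} through \textbf{\hyperlink{case5}{5}}. Each of these concludes exactly that $\flip(\uSet^i)$ is connected, which proves the lemma. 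There is no substantial obstacle here, since the case analysis of Section~\ref{secFlippingAlg} already did the work; the only points needing care are (a) that the two measures are genuinely \emph{constant} and not merely non-increasing, which is precisely where one uses that $z^*$ is a global integer minimizer together with Lemma~\ref{lemContainMin} to transport $z^*$ and its vanishing $r$-value through $\flip$, and (b) that \emph{every} case in Table~\ref{tableFlips} supplies the ``$\ef$ and $\er$ $\Rightarrow$ connected'' implication, which is exactly what parts (ii)/(iii) of Lemmata~\ref{lemPart1ReduceROutcome}--\ref{lemPart5ReduceROutcome} were arranged to give.
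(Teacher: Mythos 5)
Your proof is correct and follows essentially the same route as the paper's: both observe that once $\uSet^i$ already contains an integer optimum $z^*$, Lemma~\ref{lemma:subset} (or its corollary, Lemma~\ref{lemContainMin}) pushes $z^*$ into $\flip(\uSet^i)$ so that both measures stay at their floor values, and then the ``$\ef$ and $\er$ $\Rightarrow$ connected'' clauses of Lemmata~\ref{lemPart1ReduceROutcome}--\ref{lemPart5ReduceROutcome} deliver connectivity. Your write-up just makes explicit the step establishing $\flip(\uSet^i) \ef \uSet^i$ and $\flip(\uSet^i) \er \uSet^i$, which the paper states more tersely.
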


\begin{proof}
	By Lemma~\ref{lemma:subset}, $\uSet^i$ contains a minimizer of~\eqref{eqMainProb} for all $i \ge 0$.
	Hence,	$\uSet^i \ef \uSet^{i+1} = 0$ and $\uSet^i \ef \uSet^{i+1} = 0$.
	Connectivity of $\uSet^{i+1}$ then follows from Lemmata~\ref{lemPart1ReduceROutcome},~\ref{lemPart2ReduceROutcome},~\ref{lemPart3ReduceROutcome},~\ref{lemPart4ReduceROutcome}, and~\ref{lemPart5ReduceROutcome}.
 \qed
\end{proof}

We finish with the proof of Theorem~\ref{thmMain2}.

\begin{proof}[of Theorem~\ref{thmMain2}]
By Theorem~\ref{thm:converge_towards_minimum} and  Lemma~\ref{obs:Always_at_least_2}, we may assume that $\uSet^i$ contains a minimizer $z^*$ of~\eqref{eqMainProb} and that $\uSet^i$ is connected for each $i \ge 0$.
For each $i \ge 0$ define the sets
\[
\begin{array}{rcl}
\cM^i_1&:=& \{w \in \uSet^i : f(w) = f(z^*)\}, ~~\text{and}\\[.1 cm]
\cM^i_j &:=& \{w \in \uSet^i : w~\text{minimizes}~f~\text{over}~ \uSet^i \cap \GP(\uSet^i) \setminus \cup_{k=1}^{j-1} \cM^i_j\} ~ \forall ~j \in \{2,3,4\}.
\end{array}
\]
By Lemma~\ref{lemma:subset} $z^* \in \cM^i_1 \subseteq \cM^{i+1}_1$.
There exists some $i$ such that $\cM^i_1 = \cM^{i+1}_1$ for all $i \ge 1$.
After reindexing, we assume $\cM^i_1 = \cM^{i+1}_1 \neq \emptyset$ for all $i \ge 1$.

Every time we update $\uSet^i$ via Table~\ref{tableFlips}, it fits into \textbf{Cases~\hyperlink{case3}{3}} to {\bf \hyperlink{case5}{5}}.
Parts \emph{(iii)} in Lemmata~\ref{lemPart3ReduceROutcome},~\ref{lemPart4ReduceROutcome} and~\ref{lemPart5ReduceROutcome} imply that there exists $ w \in \uSet^{i+1} \cap (\GP(\flip(\uSet^{i+1})) \setminus \uSet^i)$.
Note $w \not \in \cM^{i+1}_1$, otherwise $\cM^0_1 \subsetneq \cM^{i+1}_1$, and $w$ does not strictly cut any vector in $\cM^i_1$ by Proposition~\ref{propConvexProperties}.
Assumption~\eqref{eqLevelSets} then implies that there exists some $i \ge 1$ such that the function value defining $\cM^i_2$ is minimized.
Furthermore, there exists an index $i^* \ge 1$ such that $\cM^i_2 = \cM^{i+1}_2$ for all $i \ge i^*$.
After reindexing, we may assume $\cM^i_2 = \cM^{i+1}_2 \neq \emptyset$ for all $i \ge 0$.

If $\uSet^0$ still fits into the table, then we may repeat the previous process to assume $\cM^i_3 = \cM^{i+1}_3 \neq \emptyset$ for all $i \ge 0$.
Again, if $\uSet^0$ fits into the table, then we may repeat this one last time to assume $\cM^i_4 = \cM^{i+1}_4 \neq \emptyset$ for all $i \ge 0$.
Recall that we do not update $\uSet^i$ if it contains a vector $v$ with $\grad(v) = \mathbf{0}$ or it does not fit into the table, and in both situations $\GP(\uSet^i)$ is lattice-free by Lemma~\ref{lemNoCase}.
If we reach the step when $\cM^i_4 = \cM^{i+1}_4 \neq \emptyset$ for all $i \ge 0$, then the disjoint sets $\cM^0_1, \cM^0_2, \cM^3_0$ and $\cM^4_0$ are all non-empty.
These four sets are contained in $\uSet^0$.
Hence, $\GP(\uSet^0)$ is lattice-free by Corollary~\ref{Cor:4_points_lattice_free}.
\qed
\end{proof}
\section{Conclusions}
The results in this paper provide the first method for updating gradient polyhedra in dimension two.
The results extend to non-differentiable functions using subgradients.
Theorem~\ref{thmMain1Again} tells us that if we `flip' as defined in Table~\ref{tableFlips} until we no longer satisfy a case in the table, then we have an optimality certificate for \eqref{eqMainProb} in the form of a lattice-free gradient polyhedron.
This procedure mimics gradient descent in the following ways: each update is represented as a gradient polyhedron with at most $2^n$ facets which meets the bounds given by the theory, each update only needs constantly many gradient evaluations, and the initial set can be an arbitrary unimodular set.
There are many open questions.
For instance, it may be possible to adjust the `step size' of each flip to achieve faster convergence.
We believe this can be analyzed whenever $\uSet$ is such that multiple successive flips $\flip(\uSet), \flip(\flip(\uSet)), \dotsc$ fall into the same case.
Other questions include if the measure $\ler$ or the updates can be extended to $n \ge 3$ or to models with additional constraints.
%

\bibliographystyle{splncs04}
\bibliography{references}
\newpage

%
%
%
\end{document}